\theoremstyle{plain} 
\newtheorem{theo}{Theorem}[section] 
\newtheorem{prop}[theo]{Proposition} 
\newtheorem{lem}[theo]{Lemma} 
\newtheorem{cor}[theo]{Corollary} 
\theoremstyle{definition} 
\newtheorem{conj}[theo]{Conjecture} 
\newtheorem{defi}[theo]{Definition}
\theoremstyle{remark}
\DeclareMathOperator{\im}{Im}
\DeclareMathOperator{\re}{Re}
\DeclareMathOperator{\Li}{Li}
\DeclareMathOperator{\Var}{Var}
\DeclareMathOperator{\ord}{ord}
\DeclareMathOperator{\Sym}{Sym}
\DeclareMathOperator{\tr}{tr}
\newcommand*\diff{\mathop{}\!\mathrm{d}}
\begin{document}
	
	\title[Discrepancies in the distribution of Gaussian primes]{Discrepancies in the distribution \\ of Gaussian primes}
	\author{Lucile Devin}
	\email{lucile.devin@univ-littoral.fr} 
\address{Univ. Littoral C\^ote d'Opale, UR 2597
	LMPA, Laboratoire de Math\'ematiques Pures et Appliqu\'ees Joseph Liouville,
	F-62100 Calais, France
}
\address{CNRS -- Université de Montréal CRM - CNRS}
	
	\keywords{Chebyshev's bias, Gaussian primes, Hecke characters} 
	\subjclass[2010]{11N05, 11K70} 
	\date\today

	\begin{abstract}
		Motivated by questions of Fouvry and Rudnick on the distribution of Gaussian primes, we develop a very general setting in which one can study inequities in the distribution of analogues of primes through analytic properties of infinitely many $L$-functions.
		In particular, we give a heuristic argument for the following claim :
		for more than half of the prime numbers that can be written as a sum of two squares, the odd square is the square of a positive integer congruent to $1 \bmod 4$.
	\end{abstract}

	\maketitle
	
	\section{Introduction}

Let $p$ be a prime number, $p\equiv 1 \bmod 4$, 
	it can be written uniquely as $p = a^2 + 4b^2$ with $a,b >0$ integers.
	Here we distinguish the odd and the even square to have uniqueness, and we are interested in properties of the coordinates $a$ and $2b$.
	For example, an open problem in this domain is to prove that there are infinitely many prime numbers of the form $p= 1 + 4b^2$. 
	Allowing a larger range for the coordinates, Fouvry and Iwaniec \cite{FouvryIwaniec_1997} have shown that there are infinitely many prime numbers  $p= a^2 + 4b^2$ with $a$ itself a prime number, while Friedlander and Iwaniec \cite{FriedlanderIwaniec_1998} established the case where one of $a$ or $2b$ is a square. 
	More recent results restricting the values of $a$ and $b$ to thinner sets include \cite{HBL,LSX,Pratt,FriedlanderIwaniec_2018}.

In another direction, starting from the result of Hecke \cite{Hecke} ensuring that the angles defined by $\arctan(\tfrac{2b}{a})$ are equidistributed, it is of interest to study finer statistics of this distribution.
Kubilius \cite{Kubilius50,Kubilius51} and Ankeny \cite{Ankeny} initiated the study of the distribution of these angles in shrinking sectors, and further developments followed \cite{Koval,Maknis,Coleman,HarmanLewis,HuangLiuRudnick,CKLMSSWWY}. 
Recently Rudnick and Waxman \cite{RudnickWaxman} studied the function field analogue and obtained statistical results that hold for almost all arcs of shrinking length.

In this paper we discuss the following two questions:
\begin{enumerate}
	\item how often is $a<2b$ compared to $a > 2b$? 
	\item how often is $a \equiv 1 \bmod 4$ compared to $a \equiv 3 \bmod 4$? 
\end{enumerate}

The results of Hecke \cite{Hecke} on the prime number theorem for $L$-functions associated to Hecke characters provide an answer to these two questions : asymptotically half of the time.
Inspired by the letter of Chebyshev to Fuss \cite{ChebLetter}, and the rich literature that followed on Chebyshev's bias (see \cite{MartinScarfy,MartinBiblio} for a survey on the numerous contributions to the questions on ``prime number races''), we investigate the discrepancy in these equidistribution results.
We denote the differences of counting functions
$$D_1(x) = \lvert\lbrace p < x : p= a^2 + 4b^2, \lvert a\rvert > \lvert 2b \rvert\rbrace\rvert - \lvert\lbrace p < x : p= a^2 + 4b^2, \lvert a\rvert < \lvert 2b \rvert\rbrace\rvert,$$
$$D_2(x) = \lvert\lbrace p < x : p= a^2 + 4b^2, \lvert a\rvert \equiv 1\bmod{4} \rbrace\rvert - \lvert\lbrace p < x : p= a^2 + 4b^2, \lvert a\rvert \equiv 3\bmod{4} \rbrace\rvert.$$
Then, assuming the Generalized Riemann Hypothesis, the results of Hecke imply that, for $i=1$, $2$, $D_i(x) = O_{\epsilon}(x^{\frac12 + \epsilon})$.
As observed in general for functions related to prime counting (see e.g. \cite{Win}), the logarithmic scale is the correct scale to study the functions $D_i$.
We recall the definition of the upper and lower logarithmic densities of a set $\mathcal{P} \subset [1,\infty)$: 
	 \[\overline{\delta}(\mathcal{P}) = \limsup_{Y\rightarrow\infty}\frac{1}{Y}\int_{0}^{Y}\mathbf{1}_{\mathcal{P}}(e^{y})\diff y \ \text{  and }
	\ \underline{\delta}(\mathcal{P}) = \liminf_{Y\rightarrow\infty}\frac{1}{Y}\int_{0}^{Y}\mathbf{1}_{\mathcal{P}}(e^{y})\diff y,\]
	where $\mathbf{1}_{\mathcal{P}}$ is the characteristic function of the set $\mathcal{P}$.
	If these two densities are equal, we denote $\delta(\mathcal{P})$ their common value and call it the logarithmic density of the set~$\mathcal{P}$.

In this paper, we give a heuristic model for the distribution of the values of the functions $D_1$ and $D_2$.
We formulate two conjectures.
	\begin{conj}\label{Conj bias even odd}
		There is a bias towards negative values in the distribution of the values of the function $D_1$.
		That is to say that for more than half of the $x \in [2,\infty)$ in logarithmic scale, more than half of the primes below $x$ can be written as a sum of two squares with the even square larger than the odd square. 
		
		However we have $D_1(x)= \Omega_{\pm}(\frac{\sqrt{x}}{\log x})$.
		In particular, the function changes signs infinitely often.
	\end{conj}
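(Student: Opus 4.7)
My plan is to adapt the Rubinstein--Sarnak framework for Chebyshev's bias to this sector-counting problem. Observe that $-D_1(x) = \sum_{N(\pi)\le x,\ 0<\arg\pi<\pi/2} f(\arg\pi)$, where $\pi$ runs over Gaussian primes in the open first quadrant and $f = \mathbf{1}_{(\pi/4,\pi/2)} - \mathbf{1}_{(0,\pi/4)}$. The first step is to Fourier expand $f$ in the orthonormal basis $\{e^{4ik\theta}\}_{k\in\mathbb{Z}}$ dual to the Hecke Grössencharacters $\xi_k(\alpha) = (\alpha/|\alpha|)^{4k}$ on $\mathbb{Z}[i]$; a direct computation gives $c_0 = 0$, $c_{2k}=0$, and $c_{2k+1} = -2i/((2k+1)\pi)$. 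The problem reduces to analysing the family of twisted prime-counting functions $\pi(x,\xi_k) = \sum_{N(\pi)\le x}\xi_k(\pi)$, weighted by the $c_k$.

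Next, for each odd $k$, I would apply the explicit formula for $L(s,\xi_k)$ to write $\pi(x,\xi_k)$ as a main term from higher Gaussian prime powers plus a sum $-\sum_{\rho_{k,j}} \Li(x^{\rho_{k,j}})$ over the nontrivial zeros (on the critical line under GRH). Passage from $\psi(x,\xi_k)$ to $\pi(x,\xi_k)$ produces a deterministic contribution from the prime squares (Gaussian primes weighted by $\xi_k(\pi^2)$). Inserting this back and summing over $k$, I expect
\[
D_1(x) = -\kappa \frac{\sqrt x}{\log x} + \frac{\sqrt x}{\log x}\sum_{k\ \text{odd}} c_k \sum_{\gamma_{k,j}} \frac{x^{i\gamma_{k,j}}}{\tfrac12 + i\gamma_{k,j}} + o\!\left(\frac{\sqrt x}{\log x}\right),
\]
where $\kappa$ aggregates the prime-power contributions (via a second application of Hecke's equidistribution to the Fourier expansion of $f(m\theta)$ for $m\ge 2$) together with any anomalous term from the ramified prime $1+i$, whose argument $\pi/4$ sits exactly on the boundary between the two sectors.

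Third, under a Linear Independence hypothesis over $\mathbb{Q}$ for the union of imaginary parts $\{\gamma_{k,j} : k\text{ odd},\ j\ge 1\}$, the standard argument yields a limiting logarithmic distribution $\mu$ for $(\log x/\sqrt x)\,D_1(x)$, which is the law of $-\kappa + Y$ with $Y$ almost surely convergent, centred, symmetric, and of full support in $\mathbb{R}$. The bias statement $\overline{\delta}\{x : D_1(x) < 0\} > 1/2$ then follows from $\kappa > 0$ and the symmetry of $Y$, while $D_1(x) = \Omega_{\pm}(\sqrt x/\log x)$ follows from the full support of $\mu$ via a standard Kronecker-type argument on the infinite torus.

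The principal obstacles are twofold. First, the double-sum convergence: the Fourier coefficients $c_k$ decay only like $1/k$, whereas the density of zeros of $L(s,\xi_k)$ grows with the analytic conductor of $\xi_k$, so a Perron-type truncation uniform over the continuous family of Hecke characters is required, together with a uniform zero-density estimate. Second, the precise evaluation and sign of $\kappa$: the naive mean of $f(2\theta)$ on $(0,\pi/2)$ vanishes by symmetry, so the actual bias must be traced to a more delicate interaction between prime squares, higher prime powers, and the ramified local factor at $2$. Establishing $\kappa>0$ is, to my mind, the crux of Conjecture~\ref{Conj bias even odd}.
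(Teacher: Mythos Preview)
Your overall strategy---Fourier-expand the indicator, reduce to explicit formulae for Hecke $L$-functions, and run the Rubinstein--Sarnak machinery---matches the paper's heuristic in Section~\ref{Sec Heuristic} (recall this statement is a conjecture; the paper does not prove it either). But your setup has a genuine error at the first step. You write $-D_1(x) = \sum_{N(\pi)\le x,\ 0<\arg\pi<\pi/2} f(\arg\pi)$ and then expand in the conductor-$(1)$ characters $\xi_k(\alpha)=(\alpha/|\alpha|)^{4k}$. For each split $p=a^2+4b^2$ the two Gaussian primes above $p$ have first-quadrant generators $a+2bi$ and $i(a-2bi)=2b+ai$, at angles $\theta$ and $\tfrac{\pi}{2}-\theta$; since $f(\theta)+f(\tfrac{\pi}{2}-\theta)=0$ identically, your sum over first-quadrant Gaussian primes is identically zero, not $-D_1(x)$. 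Equivalently, $D_1$ is not invariant under multiplication by the unit $i$ (which swaps the odd and even coordinates), so conductor-$(1)$ characters cannot detect it. The paper instead uses the characters $\xi_m$ of conductor $(2+2i)$, unfolding the angle to $[0,\pi]$ via the normalisation $\alpha\equiv 1\bmod(2+2i)$; for the step function $\phi_1=\mathbf{1}_{[0,\pi/4]\cup[3\pi/4,\pi]}-\mathbf{1}_{(\pi/4,3\pi/4)}$ the nonzero Fourier modes sit at $m\equiv 2\bmod 4$.

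Once the correct characters are in place, the constant you call $\kappa$ is not the crux: the paper evaluates the mean of the (heuristic) limiting distribution explicitly as $-\tfrac12$ via the formula of Theorem~\ref{Th race Gaussian primes}. For $\phi_1$ one has $\hat\phi_1(0)=0$, the principal-value integral $\int\phi_1(t)\tfrac{\cos t}{\cos 2t}\,dt$ vanishes by the symmetry $\phi_1(\pi-t)=\phi_1(t)$, and the entire $-\tfrac12$ comes from the second-moment term $\ord_{s=1}L(\xi_m^{(2)},\cdot)$. This traces back to the inert primes $p\equiv 3\bmod 4$ (which are absent from the sum defining $D_1$ but present in the $L$-functions), not to the ramified prime $1+i$ or to a delicate cancellation among higher powers. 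Note also that $W(\xi_m)=+1$ for every $m\equiv 2\bmod 4$, so there is no central-zero contribution to the mean in this particular race; in general, a potential zero at $s=\tfrac12$ must be separated from your oscillatory sum over $\rho_{k,j}$ and moved into the constant. You are right that the genuine obstruction to a proof is the $1/m$ decay of the Fourier coefficients; that is exactly the gap between Theorem~\ref{Th race Gaussian primes} and Conjecture~\ref{Conj bias even odd}.
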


	\begin{conj}\label{Conj bias A mod 4}
	There is a complete bias towards positive values in the distribution of the values of the function $D_2$.
	That is to say that, for almost all (in logarithmic scale) $x\in [2,\infty)$, more than half of the primes below $x$ can be written as a sum of two squares $p=a^2+4b^2$ with $\lvert a\rvert \equiv 1\bmod 4$. 

Precisely, we have $D_2(x) \geq 0$ for almost all $x$ in logarithmic scale,  and $D_2(x) = \Omega_{+}(\frac{\sqrt{x}}{\log x})$.
\end{conj}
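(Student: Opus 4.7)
The plan is to apply the general framework for Chebyshev's bias developed earlier in the paper to Hecke $L$-functions over $\mathbb{Q}(i)$. The first step is to express $D_2(x)$ as a weighted sum of Hecke character values on Gaussian primes. The congruence class of $a$ modulo $4$ (for the canonical generator $\pi = a + 2bi$ with $a>0$ odd and $b>0$) is captured by a finite-order ray class character $\chi$ on ideals of $\mathbb{Z}[i]$, whose conductor divides a suitable power of $(1+i)$, and with the crucial property that $\chi$ is quadratic, i.e.\ $\chi^2 = \mathbf{1}$. Thus, up to handling of degree-two primes and minor boundary terms, $D_2(x)$ equals the character sum $\sum_{N\mathfrak{p} \leq x,\ \deg \mathfrak{p} = 1} \chi(\mathfrak{p})$.

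The second step is the explicit formula. Under the Generalized Riemann Hypothesis for $L(s, \chi)$,
\[
\psi(x, \chi) = -\sum_\rho \frac{x^\rho}{\rho} + O(\log^2 x),
\]
and converting to a sum over primes introduces, via the identity $\chi^2 = \mathbf{1}$, a systematic contribution of size $\sqrt{x}$ from squares of Gaussian primes. With the correct orientation of $\chi$, this contribution is positive, yielding the heuristic
\[
D_2(x) = c\,\frac{\sqrt{x}}{\log x} + \frac{1}{\log x}\sum_\rho \frac{x^\rho}{\rho} + O\!\left(\frac{\sqrt{x}}{\log^2 x}\right)
\]
with $c > 0$. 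Under the Linear Independence hypothesis on the positive imaginary parts of the zeros of $L(s, \chi)$, the normalized function $D_2(x) \log x/\sqrt{x}$ admits a limiting logarithmic distribution $\mu$ with mean $c$; the omega-bound $D_2(x) = \Omega_+(\sqrt{x}/\log x)$ follows at once, as does a bias towards positive values in the weaker sense of positive logarithmic density.

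The main obstacle is the stronger claim that $D_2(x) \geq 0$ on a set of logarithmic density exactly $1$ (the \emph{complete} bias). For a typical Hecke character, the measure $\mu$ above has full support on $\mathbb{R}$, so $D_2$ would change sign on a set of positive logarithmic density, contradicting the conjecture. The complete bias can therefore only come from a special feature of $L(s, \chi)$. The most plausible mechanism is a factorization $L(s, \chi) = L(s, \psi_1) L(s, \psi_2) \cdots$ inducing a decomposition of the oscillatory sum into pieces whose combined support is confined to $[0, \infty)$ — for instance, because one factor is itself a Dedekind zeta function with real and systematically signed contributions, or because an exceptional low-lying zero of $L(s, \chi)$ at $s = 1/2$ dominates. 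Establishing such a structural feature rigorously within the framework of this paper is the crux of the conjecture.
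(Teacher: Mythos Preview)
Your heuristic rests on the claim that $D_2(x)$ is, up to negligible terms, the partial sum $\sum_{N\mathfrak p\le x}\chi(\mathfrak p)$ for a single quadratic ray class character $\chi$ on $\mathbf Z[i]$. This is where the argument fails. The sign $(-1)^{(a-1)/2}$ attached to $p=a^2+4b^2$ (with $a>0$ odd) is \emph{not} the value of any finite-order Hecke character: once one normalises the generator by $\alpha\equiv 1\bmod(2+2i)$ the real part may become negative, and whether $\lvert a\rvert\equiv 1$ or $3\bmod 4$ then depends on which half-plane the angle $\tilde\theta_p$ lies in --- a step function of the angle, not a congruence on the ideal. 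Concretely, a conjugation-invariant quadratic ray class character would make the sign a function of $p\bmod N$ for some fixed $N$, yet $p=13$ (with $a=3$) and $p=269$ (with $a=13$) are congruent modulo $256$ and carry opposite signs. You yourself noticed the consequence: a single $L$-function would give a limiting distribution of finite mean and full support on~$\mathbf R$, incompatible with a complete bias. Your speculated rescues (a special factorisation, or an exceptional zero at $s=\tfrac12$) are not the mechanism at work.

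The paper's heuristic is genuinely different. One expands $\phi_2=\mathbf 1_{[0,\pi/2]}-\mathbf 1_{(\pi/2,\pi]}$ in Fourier series and formally applies Theorem~\ref{Th race Gaussian primes A mod 4} to the \emph{infinite} family $\{L(s,\psi_m):m\ge 0\}$ of Hecke $L$-functions of conductor $(4)$; the coefficients $c_m(\phi_2)=(-1)^{(m-1)/2}\,\tfrac{4}{m\pi}$ for odd $m$ just miss the required decay, which is why the argument remains heuristic. The decisive computation is that $W(\psi_m)=-1$ exactly when $m\equiv 3\bmod 4$, forcing (under minimal vanishing) a central zero for each such $m$; since $c_m(\phi_2)=-\tfrac{4}{m\pi}$ there, these contribute $\sum_{m\equiv 3\,(4)}\tfrac{4}{m\pi}=+\infty$ to the mean --- equivalently, $\tfrac{1}{2\pi}\int_{-\pi}^{\pi}\phi_2(t)\,\tfrac{\diff t}{2\cos t}=+\infty$. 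Under a bounded-multiplicity hypothesis on shared zeros the variance stays finite, being dominated by $\sum_m m^{-2}(\log m)^3$. An \emph{infinite} mean with finite variance is precisely what drives the conjectured complete bias $\nu_{\phi_2}([0,\infty))=1$ and the $\Omega_+(\sqrt x/\log x)$ growth.
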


\begin{figure}
\includegraphics[scale=0.75]{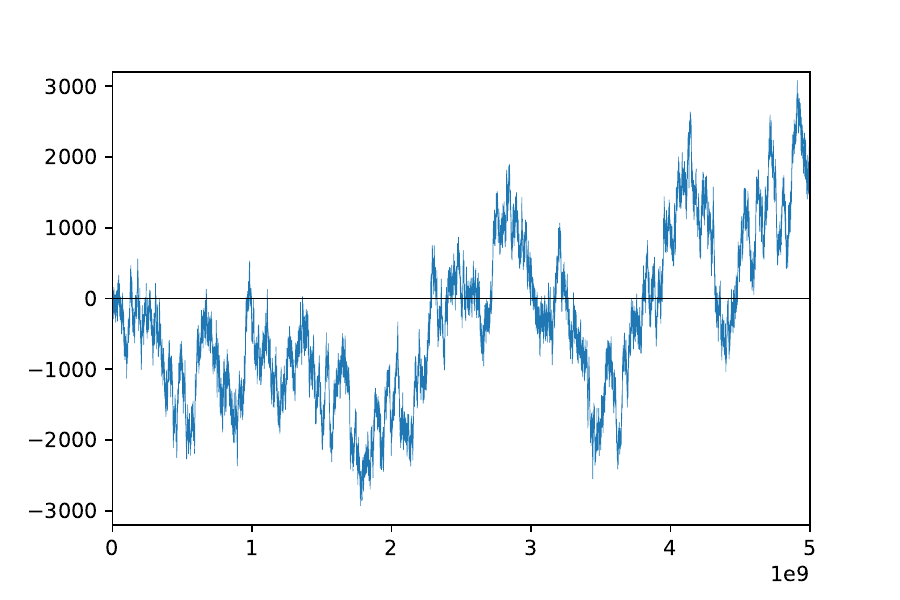}
\caption{$D_1(x)$ for $x \in [2,5\cdot 10^9]$.} 
\label{Fig race odd even}
\end{figure}

\begin{figure}
\includegraphics[scale=0.75]{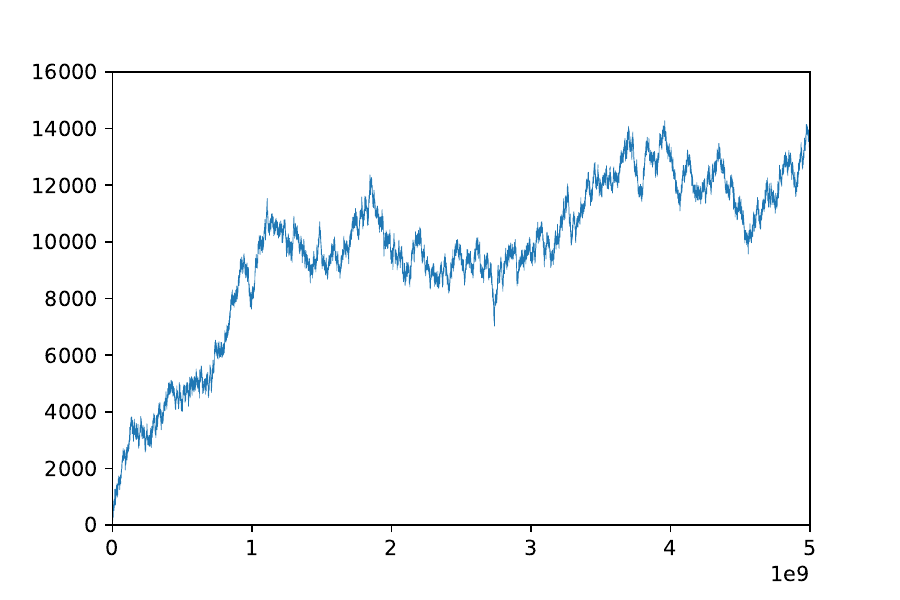}
\caption{$D_2(x)$ for $x \in [2,5\cdot 10^9]$.}
\label{Fig race A mod 4}
\end{figure}

The $\Omega$ notation in the Conjectures is used in the following sense. 
For two functions~$f,g : [1,\infty) \rightarrow \mathbf{R}$, with~$g$ positive, we write $f(x) = \Omega_{+}(g)$ (resp. $ \Omega_{-}(g)$) if we have
$\limsup_{x\rightarrow \infty} \frac{f(x)}{g(x)} >0$ (resp. $\liminf_{x\rightarrow \infty} \frac{f(x)}{g(x)}<0$).

Numerical data up to $5\cdot 10^9$ have been computed to support both conjectures and the graphs of the prime number races are presented in Figures~\ref{Fig race odd even} and~\ref{Fig race A mod 4}. Note in particular that no negative value of $D_2$ was found in the interval.

The paper is organized as follows. 
Section~\ref{sec General} is the technical heart of the paper where the setting and the precise statement of our main theoretical contribution (Theorem~\ref{Th_GeneDistLim}) are given. 
Here we emphasize the novel feature (compared to previous results e.g. \cite{RS,ANS,DevinChebyshev}) where we have managed to obtain distribution results involving infinitely many $L$-functions. 
In Section~\ref{sec cor Gaussian case}, we introduce the setting to study statistics about the distribution of Gaussian primes, and we state our results towards Conjecture~\ref{Conj bias even odd} and Conjecture~\ref{Conj bias A mod 4}: Theorem~\ref{Th race Gaussian primes} and Theorem~\ref{Th race Gaussian primes A mod 4}. 
These two results are proved in Section~\ref{Sec Hecke L functions} as consequences of Theorem~\ref{Th_GeneDistLim}. 
Building on the ideas underlying Theorem~\ref{Th race Gaussian primes} and Theorem~\ref{Th race Gaussian primes A mod 4}, we present in Section~\ref{Sec Heuristic} heuristics that led us to state Conjecture~\ref{Conj bias even odd} and Conjecture~\ref{Conj bias A mod 4}. 
Finally in Section~\ref{Section proof theo general}, we give the proofs of the theoretical contributions stated in Section~\ref{sec General}, notably Theorem~\ref{Th_GeneDistLim}.

\section*{Acknowledgements}
I am very grateful to Zeév Rudnick and Etienne Fouvry whose questions on very early versions of this project were the starting points for what became respectively Theorem~\ref{Th race Gaussian primes} and Conjecture~\ref{Conj bias A mod 4}.
This paper also benefited greatly from discussions with Chantal David, Daniel Fiorilli, Andrew Granville, Florent Jouve and Anders Södergren. I thank the referee for their careful reading and suggestions to improve the manuscript.
The author was supported successively by postdoctoral fellowships from the University of Ottawa, from the Centre de Recherches Mathématiques and from Chalmers University of Technology. The author is supported by the grant KAW 2019.0517 from the Knut and Alice Wallenberg Foundation.

The computations presented here were obtained using SageMath~\cite{Sage}, with the help of Valentin Priasso to deal with larger numbers and space, and using PARI/GP~\cite{PARI2}, with the help of Emmanuel Royer.

	\section{Chebyshev's bias using infinitely many $L$-functions}\label{sec General}

\subsection{Motivations and setting} 
	
	In \cite{MazurErrorTerm}, Mazur discussed prime number races for elliptic curves, 
	or more generally, for the Fourier coefficients of a modular form.
	For example, he studied graphs of functions
	\begin{align*}
		x\mapsto \lvert\lbrace p\leq x : a_{p}(E) >0 \rbrace\rvert -  \lvert\lbrace p\leq x : a_{p}(E) <0 \rbrace\rvert
	\end{align*}
	where $a_{p}(E) = p+1 - \lvert E(\mathbf{F}_{p})\rvert$, 
	for some elliptic curve $E$ defined over $\mathbf{Q}$, 
	and observed a bias towards negative values when the algebraic rank of the elliptic curve is large.
	In \cite{SarnakLetter}, Sarnak commented and explained Mazur's observations.
	His analysis of the prime number race involves the zeros of all the symmetric powers $L(\Sym^{n}E,s)$ of the Hasse--Weil $L$-function of $E/\mathbf{Q}$. 
	Sarnak notes that, in the case of an elliptic curve without complex multiplication, the corresponding distribution may have infinite variance and concludes that Mazur's race should be unbiased. 
	However, in the case of an elliptic curve with complex multiplication, even though an infinite number of $L$-functions is needed to understand Mazur's bias, Sarnak states that it would be possible to observe (and compute) an actual bias.
	In \cite{CFJ}, Cha, Fiorilli and Jouve develop these ideas in the context of elliptic curves over function fields. 
	The heuristics for Conjecture~\ref{Conj bias even odd}  (resp.~\ref{Conj bias A mod 4}) is the implementation of these ideas in the special case of the elliptic curve $y^2 = x^3 - x$ (resp. $y^2 = x^3 + x$) with complex multiplication by the ring of Gaussian integers~$\mathbf{Z}[i]$.
	
	In the spirit of \cite{DevinChebyshev}, the main result of this article is stated in greater generality and deals with the case of prime number races using infinitely many real analytic $L$-functions.	
	Before stating our theorem, let us present some definitions and notation.
	Our main result states the existence of a limiting logarithmic distribution for a suitable normalization of a counting function, and thus allow us to discuss the bias in the distribution of the values of this function.	
	\begin{defi}
		Let $F:[1,\infty)\rightarrow\mathbf{R}$ be a real function. 
		We say that $F$ admits a limiting logarithmic distribution
		if there exists a probability measure $\mu$ on the Borel sets in $\mathbf{R}$ such that
		for any bounded Lipschitz continuous function $g$, we have
		\begin{align*}
			\lim_{Y\rightarrow\infty}\frac{1}{Y}\int_{0}^{Y}g(F(e^{y}))\diff y = 
			\int_{\mathbf{R}}g(t)\diff\mu(t).
		\end{align*}
		If  $F$ admits a limiting logarithmic distribution $\mu$, we say that $\mu([0,\infty))$ is \emph{the bias} of $F$ towards non-negative values. 
	\end{defi}
	
	Note that our definition of the bias differs from previous literature where it is usually defined as the logarithmic density of the set of $x$ such that $F(x)>0$. However, in the setting of Chebyshev's bias, it is expected (see \cite{KR}, also \cite{RS} assuming the Linear Independence and \cite{MartinNg,DevinChebyshev} under weaker hypotheses) that in general the distribution $\mu$ is continuous at $0$ and then the two definition coincide.
	We thus consider it as a sufficiently good approximation.
	
	\subsection{Statement of the main result}
	
	In this paper we use the notion of \emph{analytic $L$-function} defined in \cite[Def. 1.1]{DevinChebyshev}. Let us just recall that a vast majority of the $L$-functions used in analytic number theory (and all the $L$-functions referred to in this article) are proven (or at least conjectured) to be analytic $L$-functions.
	We are interested in the prime number race associated to a sequence $\mathcal{S} =\lbrace L(f_m,\cdot) :  m\geq 0 \rbrace$ of real analytic $L$-functions of degrees $(d_m)_{m\geq 0}$, and analytic conductors $(\mathfrak{q}(f_m))_{m\geq 0}$. 
	To this sequence we associate real coefficients $\underline{c} = (c_{m})_{m\geq 0}$ such that
	the series 
	\begin{equation}
		\label{condition coeff}
		\sum_{m\geq 0}\lvert c_m \rvert d_m \log \mathfrak{q}(f_m) <\infty
	\end{equation} is convergent.
	Moreover, we assume that the Generalized Riemann Hypothesis is true for all the $L$-functions $ L(f_m,\cdot)$, $m\geq 0$ and their \emph{second moment}.
	 Recall from~\cite[Def.~1.1.(iii)]{DevinChebyshev} that the second moment of $L(f,\cdot)$ is defined\footnote{and that this possibly confusing name is due to \cite{Conrad}} as $L(f^{(2)},\cdot) := L(\Sym^2f,\cdot) L(\wedge^2f,\cdot)^{-1}$ when the $L$-function is seen as defined over rational primes; in particular if the generic local factors of $L(f,s)$ at rational primes are of the shape $\prod_{j=1}^{d}(1 - \alpha_{j,p}p^{-s})^{-1}$, those of $L(f^{(2)},s)$ are given by $\prod_{j=1}^{d}(1 - \alpha_{j,p}^2 p^{-s})^{-1}$.
	More precisely the assumption is that the non-trivial zeros of the functions $ L(f_m,\cdot)$, $ L(\Sym^2f_m,\cdot)$, and $ L(\wedge^2f_m,\cdot)$, $m\geq 0$ all have real part equal to $\tfrac12$.
	
	We note that, in this case, the difficulty compared to previous results is to deal with the fact that there can be an infinite number of zeros of the functions $L(f_m,\cdot)$, $m\geq 0$ with bounded imaginary part.
	Let us first fix the notation for the sets of zeros of the $L$-functions.
	For any $\rho\in \mathbf{C}$, we denote by $\ord(\rho,m) = \ord_{s= \rho}(L(f_m,s))$ the order of the zero at $s=\rho$ of the function $L(f_m,s)$. For $M\geq0$, we denote 
	\begin{align*}
		\ord_{\mathcal{S},\underline{c},M}(\rho) = \sum_{m\leq M} c_m \ord(\rho,m), \qquad
		\ord_{\mathcal{S},\underline{c}}(\rho) = \sum_{m\geq 0} c_m \ord(\rho,m).
	\end{align*}
	One has (see e.g. \cite[Prop.~5.7~(1)]{IK}),
	$$\ord(\rho,m) \ll \log(\mathfrak{q}(f_m)(\lvert\rho\rvert +3)^{d_m}), $$ 
	with an absolute implicit constant. Thus, the condition~\eqref{condition coeff} on the coefficients~$c_m$ ensures that the series defining $\ord_{\mathcal{S},\underline{c}}(\rho)$ is convergent for each~$\rho$.
	When we assume the Generalized Riemann Hypothesis, we omit the $\frac12$ in the notation and write for example, for $\gamma \in \mathbf{R}$,
	$\ord_{\mathcal{S},\underline{c}}(\gamma)$ instead of $\ord_{\mathcal{S},\underline{c}}(\tfrac12 +i\gamma)$.
	Then, for~$M\geq 0$ and $T >0$, we denote
	\begin{align*}
		\mathcal{Z}_{\mathcal{S},\underline{c}} = \lbrace \gamma>0 : \ord_{\mathcal{S},\underline{c}}(\gamma)\neq 0 \rbrace, \quad
		& \quad \mathcal{Z}_{\mathcal{S},\underline{c}}(T) = \mathcal{Z}_{\mathcal{S},\underline{c}}\cap(0,T] \\
		\mathcal{Z}_{\mathcal{S},\underline{c},M} = \lbrace \gamma>0 : \ord_{\mathcal{S},\underline{c},M}(\gamma)\neq 0\rbrace, \quad
		& \quad \mathcal{Z}_{\mathcal{S},\underline{c},M}(T) = \mathcal{Z}_{\mathcal{S},\underline{c},M}\cap(0,T],
	\end{align*}
	the sets of positive imaginary parts of zeros of the product of the $L$-functions in the family. We do not count with multiplicities in these sets. 
	
We now have all the tools to state our main theorem.
	
	\begin{theo}\label{Th_GeneDistLim}
		Let $\mathcal{S} =\lbrace L(f_m,\cdot) :  m\geq 0 \rbrace$ be a sequence of real analytic $L$-functions of degrees $(d_m)_{m\geq 0}$ and analytic conductors $(\mathfrak{q}(f_m))_{m\geq 0}$, 
		and let $\underline{c} = (c_{m})_{m\geq 0}$ be a sequence of real numbers such that
		the series $$\sum_{m\geq 0}\lvert c_m \rvert d_m \log \mathfrak{q}(f_m)$$ is convergent.
		Assume the Riemann Hypothesis is satisfied for all $L(f_m,\cdot)$, $m\geq 0$ and their second moment.
	Then the function
		\[E_{\mathcal{S},\underline{c}}(x):=\frac{\log x}{\sqrt{x}}\Bigg(
		\sum_{p\leq x}\sum_{m\geq 0}c_{m}\lambda_{f_m}(p) + \sum_{m\geq 0}c_{m}\ord_{s=1}(L(f_m,s)) \Li(x) \Bigg)\]
	 admits a limiting logarithmic distribution $\mu_{\mathcal{S},\underline{c}}$
		with average value
		\[\mathbb{E}(\mu_{\mathcal{S},\underline{c}}) = m_{\mathcal{S},\underline{c}} := \sum_{m\geq 0}c_{m}
		\left(\ord_{s=1}(L(f_m^{(2)},s)) 
		-2\ord_{s=\frac12}(L(f_m,s))\right)\]
		and variance
		\[\Var(\mu_{\mathcal{S},\underline{c}}) = 2\sum_{\gamma \in \mathcal{Z}_{\mathcal{S},\underline{c}}}  \frac{\lvert \ord_{\mathcal{S},\underline{c}}(\gamma)\rvert^{2}}{\frac14 +\gamma^{2}}.\] 
		Moreover, there exists a constant $a>0$ depending on $\mathcal{S}$ and $\underline{c}$, such that we have 
		\[ \mu_{\mathcal{S},\underline{c}}((-\infty,-R)\cup(R,\infty)) \ll \exp(-a\sqrt{R}).\]
	\end{theo}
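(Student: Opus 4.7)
The plan is to adapt the framework of \cite{DevinChebyshev}, which handled the case of finitely many $L$-functions, by truncating the family at level $M$, establishing the result for each finite sub-family, and then passing to the limit $M\to\infty$ via the summability hypothesis on $(c_m)$, which provides the required uniform control.

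For each $M \geq 0$, let $E_{\mathcal{S},\underline{c},M}$ denote the function obtained from $E_{\mathcal{S},\underline{c}}$ by restricting the sums over $m$ to $m\leq M$. I would first apply the explicit formula under GRH to each $L(f_m,\cdot)$, $m\leq M$. The contributions of $\lambda_{f_m}(p^2)$ to $\psi_{f_m}(x)$ produce the $\ord_{s=1}(L(f_m^{(2)},s))$ term in the mean, while the possible zero of $L(f_m,\cdot)$ at $s=1/2$ yields the $-2\,\ord_{s=1/2}(L(f_m,s))$ term (via the $\rho = 1/2$ summand in the zero sum). After partial summation and the change of variable $y = \log x$, one obtains
\[ E_{\mathcal{S},\underline{c},M}(e^y) = m_{\mathcal{S},\underline{c},M} - 2 \sum_{\gamma \in \mathcal{Z}_{\mathcal{S},\underline{c},M}} \re\!\left(\frac{\ord_{\mathcal{S},\underline{c},M}(\gamma)\, e^{i\gamma y}}{\tfrac12 + i\gamma}\right) + o(1), \]
which is a Besicovitch $B^2$-almost periodic function of $y$. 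Following \cite[\S3]{DevinChebyshev}, Plancherel for almost periodic functions then yields a limiting logarithmic distribution $\mu_{\mathcal{S},\underline{c},M}$ with the expected mean $m_{\mathcal{S},\underline{c},M}$ and variance.

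The next step is to pass to the limit $M\to\infty$. The key estimate is that $E_{\mathcal{S},\underline{c}} - E_{\mathcal{S},\underline{c},M}$ tends to zero in logarithmic $L^2$-norm: by Plancherel, its squared norm is controlled by $\sum_{m>M}\lvert c_m\rvert^2 \sum_\gamma \lvert\ord(\gamma,m)\rvert^2/(\tfrac14+\gamma^2)$, and the standard zero-density estimate $\sum_\gamma (\tfrac14+\gamma^2)^{-1} \ll d_m \log \mathfrak{q}(f_m)$ reduces this to a multiple of $\sum_{m>M}\lvert c_m\rvert d_m \log \mathfrak{q}(f_m)$, which tends to zero by hypothesis. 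Combined with the convergence of means and variances (handled by the same summability), this shows that $(\mu_{\mathcal{S},\underline{c},M})_M$ is Cauchy (e.g.\ in Wasserstein sense) and converges weakly to a distribution $\mu_{\mathcal{S},\underline{c}}$ with the stated parameters, which is then the limiting logarithmic distribution of $E_{\mathcal{S},\underline{c}}$.

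For the exponential tail bound, one estimates the characteristic function $\hat{\mu}_{\mathcal{S},\underline{c}}(\xi)$ uniformly through its truncated approximations. Without a linear independence hypothesis on the imaginary parts of the zeros, $\hat{\mu}_{\mathcal{S},\underline{c}}$ does not factor over zeros as a product of Bessel functions; instead, one controls exponential moments of the almost periodic sum directly, combining the uniform variance bound with Khintchine-type inequalities applied blockwise over the zero set. This yields an upper bound of the form $\lvert\hat{\mu}_{\mathcal{S},\underline{c}}(\xi)\rvert \ll \exp(-c\,\psi(\xi))$ for a suitable function $\psi$, from which the $\exp(-a\sqrt R)$ tail follows by Fourier inversion and Markov's inequality applied to the weight $\exp(c\lvert \cdot \rvert^{1/2})$. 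The main obstacle throughout is precisely this uniform-in-$M$ control, required both for the weak convergence and for transferring the tail estimate to the limit; the summability hypothesis $\sum \lvert c_m\rvert d_m \log \mathfrak{q}(f_m) < \infty$ is tailored, via the zero-density estimate above, to supply all these uniform bounds at once.
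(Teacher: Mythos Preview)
Your overall strategy for existence, mean, and variance is close to the paper's: both truncate the family and use $B^2$-almost periodicity to produce approximating distributions, then pass to a limit controlled by the summability hypothesis. The paper organizes this slightly differently, coupling the family truncation $M$ to the zero-height truncation $T$ via a function $M(T)\to\infty$ chosen so that $\sum_{m>M(T)}\lvert c_m\rvert\log\mathfrak{q}(f_m)\leq T^{-1}(\log T)^{-2}$; this reduces everything to a single limit $T\to\infty$ and feeds directly into \cite[Th.~2.9]{ANS}. Your two-step limit would also work, but your ``by Plancherel'' bound on $\lVert E_{\mathcal{S},\underline{c}}-E_{\mathcal{S},\underline{c},M}\rVert_{B^2}^2$ silently drops the cross terms between different $m$; one should use the triangle inequality in $B^2$ (equivalently Cauchy--Schwarz on the $y$-integral, as the paper does), which yields $\big(\sum_{m>M}\lvert c_m\rvert(\ldots)^{1/2}\big)^2$ rather than $\sum_{m>M}\lvert c_m\rvert^2(\ldots)$.

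The genuine gap is the tail bound. Your route via characteristic-function estimates and ``Khintchine-type inequalities applied blockwise over the zero set'' is not a standard device, and it is unclear how to carry it out without any linear-independence input on the $\gamma$'s: the zero sum is not a sum of independent variables, so neither Khintchine nor sub-Gaussian moment bounds apply in any obvious way. The paper's argument is much more direct and bypasses the characteristic function entirely. The approximating measure $\mu_{\mathcal{S},\underline{c},M(T),T}$ is supported in $[-A(\log T)^2,\,A(\log T)^2]$ (since the finite zero sum is bounded by $\sum_m\lvert c_m\rvert\sum_{0<\gamma\leq T}\gamma^{-1}\ll(\log T)^2$), while the $B^2$-error between $E_{\mathcal{S},\underline{c}}$ and its $(M(T),T)$-truncation is $\ll(\log T)/\sqrt{T}$. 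Choosing $T$ with $A(\log T)^2\asymp R$, so $T\asymp e^{c\sqrt{R}}$, and applying Chebyshev's inequality to this error exactly as in \cite[Th.~1.2]{RS} and \cite[Lem.~4.8]{DevinChebyshev}, gives $\mu_{\mathcal{S},\underline{c}}(\{\lvert t\rvert>R\})\ll\exp(-a\sqrt{R})$.
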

	
	We prove this theorem in Section~\ref{Section proof theo general}.
	A way to understand this statement, is to think that the function $y\mapsto E_{\mathcal{S},\underline{c}}(e^y)$ takes its values with some probability law of mean value $m_{\mathcal{S},\underline{c}}$.
	In general (see \cite[Cor. 2.4]{DevinChebyshev}), we expect the probability law to be symmetric with respect to its mean value, so we think of the mean value as a good indicator of the behaviour of $y\mapsto E_{\mathcal{S},\underline{c}}(e^y)$, and the indication is more precise when the variance is relatively small.
	
	In the case of \cite{SarnakLetter}, $E/\mathbf{Q}$ is an elliptic curve without complex multiplication, and we consider the $L$-functions $L(f_1,\cdot) = L(E,\cdot)$ and  $L(f_m,\cdot) = L(\Sym^mE,\cdot)$ for $m\geq 2$. The degree of each $L$-function is $d_m = m+1$ and the analytic conductors satisfy $\log\mathfrak{q}(f_m) \ll m \log (m\mathfrak{q}(f_1)) $ (see \cite[(ii)]{SarnakLetter}). We observe that our condition on the convergence of $\sum_{f_m\in \mathcal{S}}\lvert c_m \rvert d_m \log \mathfrak{q}(f_m)$ corresponds to the condition stated in \cite[Cor. 2.9]{CFJ} --- precisely $c_m \ll m^{-3-\eta}$ for some $\eta >0$ --- to ensure the existence of the limiting distribution in the analogous question over function fields.
	
	\subsection{Signs changes}
	Most of the conditional results of \cite{RS,MartinNg,DevinChebyshev} giving more precisions on the properties of the limiting distribution $\mu_{\mathcal{S},\underline{c}}$ can be adapted to this case.
	Let us present here some results concerning the support of $\mu_{\mathcal{S},\underline{c}}$, as they can help to answer the question ``does the function have infinitely many sign changes?'', and provide Omega-results.
	Similarly to \cite[Th. 1.2]{RS}, we have a lower bound for the tails of the distribution, in case all the coefficients have the same sign.
	\begin{prop}\label{Prop lower bound tails}
		Let $\mathcal{S} =\lbrace L(f_m,\cdot) :  m\geq 0 \rbrace$ be a sequence of real analytic $L$-functions of degree $d_m$, and analytic conductor $\mathfrak{q}(f_m)$, 
		and let $\underline{c} = (c_{m})_{m\geq 0}$ be a sequence of \emph{non-negative} real numbers such that
		the series $$\sum_{m\geq 0} c_m  d_m \log \mathfrak{q}(f_m)$$ is convergent, and at least one $c_m \neq 0$.
		Assume the Riemann Hypothesis is satisfied for all $L(f_m,\cdot)$, $m\geq 0$ and their second moments.
		Then, there exist a constant $b>0$, depending on $\mathcal{S}$ and $\underline{c}$, such that we have
		$$ \min\big(\mu_{\mathcal{S},\underline{c}}((R,\infty)), \mu_{\mathcal{S},\underline{c}}((-\infty,-R))\big) \gg \exp(-\exp bR).$$
	\end{prop}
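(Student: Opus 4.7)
The plan is to adapt the tail lower bound argument of Theorem 1.2 of Rubinstein--Sarnak \cite{RS} to this infinite family setting. The first step is to extract, from the proof of Theorem~\ref{Th_GeneDistLim} in Section~\ref{Section proof theo general}, an explicit description of $\mu_{\mathcal{S},\underline{c}}$ as the law of a random series of the form
\begin{equation*}
	X = m_{\mathcal{S},\underline{c}} \;-\; 2\sum_{\gamma \in \mathcal{Z}_{\mathcal{S},\underline{c}}} \frac{\ord_{\mathcal{S},\underline{c}}(\gamma)}{\sqrt{\tfrac14+\gamma^{2}}}\cos(\Theta_{\gamma}-\phi_{\gamma}),
\end{equation*}
where $(\Theta_{\gamma})_{\gamma\in \mathcal{Z}_{\mathcal{S},\underline{c}}}$ is distributed according to the normalised Haar measure on the closed subgroup $H$ of $(\mathbf{R}/2\pi\mathbf{Z})^{\mathcal{Z}_{\mathcal{S},\underline{c}}}$ obtained as the closure of the one-parameter orbit $\{(y\gamma)_{\gamma}\bmod 2\pi : y\in \mathbf{R}\}$ (a Kronecker--Weyl type statement that should fall out of the argument producing the limiting distribution).

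The \emph{key observation} is that under the hypothesis $c_{m}\geq 0$, combined with the fact that each individual multiplicity $\ord(\rho,m)$ is a non-negative integer, one has $\ord_{\mathcal{S},\underline{c}}(\gamma)\geq 0$ for every $\gamma\in \mathcal{Z}_{\mathcal{S},\underline{c}}$. Consequently every term in the series above contributes with the same sign, which allows for constructive interference: aligning all the angles so that $\cos(\Theta_{\gamma}-\phi_{\gamma})\approx -1$ (resp. $\approx +1$) drives $X$ to large positive (resp. negative) values. This mono-signed structure is exactly what is missing when the $c_{m}$ have mixed signs, and it is what makes the two-sided lower bound possible.

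Next I would truncate: given $R$, choose a height $T=T(R)$ so that the partial sum $\sum_{\gamma\in\mathcal{Z}_{\mathcal{S},\underline{c}}(T)}\ord_{\mathcal{S},\underline{c}}(\gamma)/\sqrt{\tfrac14+\gamma^{2}}$ is at least $2R$ when the cosines are saturated. The bound $\ord(\rho,m)\ll \log(\mathfrak{q}(f_m)(|\rho|+3)^{d_m})$ recalled before the statement of Theorem~\ref{Th_GeneDistLim}, together with the hypothesis $\sum_m |c_m| d_m \log \mathfrak{q}(f_m)<\infty$, bounds $\ord_{\mathcal{S},\underline{c}}(\gamma)$ and shows that $|\mathcal{Z}_{\mathcal{S},\underline{c}}(T)|\ll T\log T$, so one can take $T = \exp(O(R))$. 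The tail $\gamma>T$ is harmless: by Chebyshev's inequality and the finiteness of $\Var(\mu_{\mathcal{S},\underline{c}})$ from Theorem~\ref{Th_GeneDistLim}, the contribution of the tail is smaller than $R/2$ outside a set of probability $1/2$, say. Combining, one obtains
\begin{equation*}
	\mu_{\mathcal{S},\underline{c}}\bigl((R,\infty)\bigr) \;\geq\; \tfrac12\,\operatorname{Haar}_{H}\bigl(\{\theta : |\theta_{\gamma}-\pi-\phi_{\gamma}|\leq \delta,\;\forall \gamma\in \mathcal{Z}_{\mathcal{S},\underline{c}}(T)\}\bigr)
\end{equation*}
with $\delta$ of order a fixed negative power of $R$, and similarly for the negative tail.

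The hard part will be the final Haar lower bound on $H$, since $H$ is \emph{a priori} only a closed subgroup of the torus (we do not assume Linear Independence of the imaginary parts of zeros). The standard resolution, which I expect to carry over here, is that $H$ is itself a closed connected subgroup (in fact a sub-torus of the infinite torus) projecting surjectively onto each finite-dimensional coordinate torus indexed by a finite subset of $\mathcal{Z}_{\mathcal{S},\underline{c}}$, because each generator $\gamma$ is real; an appropriate lower bound on Haar measure of a $\delta$-box in a $k$-dimensional sub-torus of $(\mathbf{R}/2\pi\mathbf{Z})^{k}$ then gives a bound of the form $\delta^{Ck}$ with $k\leq |\mathcal{Z}_{\mathcal{S},\underline{c}}(T)|\ll T\log T$. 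Plugging $T=\exp(O(R))$ and $\log(1/\delta)\ll \log R$ yields the claimed double exponential $\exp(-\exp(bR))$ for a suitable constant $b>0$ depending on $\mathcal{S}$ and $\underline{c}$.
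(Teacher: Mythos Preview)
Your broad strategy and your key observation that $c_m\geq 0$ forces $\ord_{\mathcal{S},\underline{c}}(\gamma)\geq 0$ are both on target, and this is exactly the mechanism the paper (following \cite{RS}) exploits. However, there is a genuine gap in the step you yourself flag as ``the hard part''.

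The claim that $H$ projects surjectively onto the coordinate torus indexed by any \emph{finite} subset of $\mathcal{Z}_{\mathcal{S},\underline{c}}$ is false in general: if for instance $\gamma_2=2\gamma_1$, the closure of $y\mapsto (y\gamma_1,y\gamma_2)$ is the one--dimensional subtorus $\{(\theta,2\theta)\}$, not the full $2$--torus. More importantly, your target point $(\pi+\phi_\gamma)_\gamma$ has no reason to lie in $H$ at all, so the $\delta$--box you want to measure may have empty intersection with $H$, and the Haar lower bound collapses. Without Linear Independence you cannot simply prescribe the phases $\Theta_\gamma$ independently.

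The paper circumvents this exactly as in \cite[Sec.~2.2]{RS}: instead of working with the measure $\mu_{\mathcal{S},\underline{c}}$, it smooths the function itself,
\[
F_{\epsilon}(t)=\frac{1}{\epsilon}\int_{t-\epsilon/2}^{t+\epsilon/2}E_{\mathcal{S},\underline{c}}(e^{y})\diff y
= \frac{4}{\epsilon}\sum_{\gamma}\ord_{\mathcal{S},\underline{c}}(\gamma)\,\frac{\sin(t\gamma)\sin(\tfrac{\epsilon}{2}\gamma)}{\gamma^{2}}+O(1),
\]
truncates to a finite sum $\tilde F_\epsilon$, and observes that at the \emph{specific} points $t_0=\pm\epsilon/2$ (which lie on the orbit by construction) every term becomes $\pm\ord_{\mathcal{S},\underline{c}}(\gamma)\sin^{2}(\tfrac{\epsilon}{2}\gamma)/\gamma^{2}$, all of the same sign thanks to $\ord_{\mathcal{S},\underline{c}}(\gamma)\geq 0$. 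This produces, for free, a point on the one--parameter flow where $\tilde F_\epsilon$ is large of the desired sign; Dirichlet's simultaneous approximation theorem then yields a set of $t$ of quantifiable positive density where $\tilde F_\epsilon(t)$ is close to $\tilde F_\epsilon(t_0)$, and the double--exponential bound follows. The smoothing--and--evaluate--at--$t_0=\pm\epsilon/2$ trick is precisely the missing idea that replaces your unavailable alignment in $H$.
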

As we may want to use coefficients of different signs, we now state a result inspired by \cite[Th.~1.5(b and c)]{MartinNg} and using the notion of \emph{self-sufficient zero} that Martin and Ng introduced.
	\begin{defi}\label{Def_selfsuff}
We say that an ordinate $\gamma\in\mathcal{Z}_{\mathcal{S},\underline{c}}$ is self-sufficient if it is not in the $\mathbf{Q}$-span of $\mathcal{Z}_{\mathcal{S},\underline{c}}\smallsetminus\lbrace \gamma\rbrace$.
	\end{defi}
A priori, if there is no special reason for the imaginary parts of the zeros of some $L$-functions to be related, 
then we do not expect that there are any relation between them.
This general idea is called the General Simplicity Hypothesis, or the Linear Independence hypothesis (LI). It is used in particular in \cite{RS} to give more precisions on the limiting distribution in the original case of Chebyshev's bias as well as to compute an explicit value of this bias.
To state our next result, we do not need all the strength of this hypothesis. We show that
if there are enough self-sufficient zeros in $\mathcal{Z}_{\mathcal{S},\underline{c}}$ then the distribution~$\mu_{\mathcal{S},\underline{c}}$ is supported on all~$\mathbf{R}$.
	\begin{prop}\label{Prop inclusive}
		Let $\mathcal{S} =\lbrace L(f_m,\cdot) :  m\geq 0 \rbrace$ be a sequence of real analytic $L$-functions of degrees $(d_m)_{m\geq 0}$ and analytic conductors $(\mathfrak{q}(f_m))_{m\geq 0}$, 
		and let $\underline{c} = (c_{m})_{m\geq 0}$ be a sequence of real numbers such that
		the series $$\sum_{m\geq 0} \lvert c_m\rvert  d_m \log \mathfrak{q}(f_m)$$ is convergent.
		Assume the Riemann Hypothesis is satisfied for all $L(f_m,\cdot)$, $m\geq 0$ and their second moment.
		Let $\mathcal{Z}_{\mathcal{S},\underline{c}} = \lbrace \gamma>0 : \ord_{\mathcal{S},\underline{c}}(\gamma)\neq 0 \rbrace$
		and $\mathcal{Z}_{\mathcal{S},\underline{c}}^{\mathrm{LI}}$ the set of self-sufficient elements in~$\mathcal{Z}_{\mathcal{S},\underline{c}}$.
		Assume that $$\sum_{\gamma \in \mathcal{Z}_{\mathcal{S},\underline{c}}^{\mathrm{LI}}} \frac{\lvert \ord_{\mathcal{S},\underline{c} }(\gamma)\rvert}{\gamma}$$ diverges. 
		Then
		$ \mathrm{supp}(\mu_{\mathcal{S},\underline{c}}) = \mathbf{R}$.
	\end{prop}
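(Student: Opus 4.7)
The plan is to adapt the Kronecker--Weyl strategy of \cite{RS,MartinNg} to this infinite-family setting. From the proof of Theorem~\ref{Th_GeneDistLim} I would first extract the representation of $\mu_{\mathcal{S},\underline{c}}$ as the pushforward of the Haar measure on the closure
\[\mathcal{T} := \overline{\{(\gamma y \bmod 2\pi)_{\gamma \in \mathcal{Z}_{\mathcal{S},\underline{c}}} : y \in \mathbf{R}\}} \subset \prod_{\gamma \in \mathcal{Z}_{\mathcal{S},\underline{c}}} \mathbf{R}/2\pi\mathbf{Z}\]
under the continuous map
\[(\theta_\gamma)_\gamma \longmapsto m_{\mathcal{S},\underline{c}} - \sum_{\gamma \in \mathcal{Z}_{\mathcal{S},\underline{c}}} \frac{2\,\ord_{\mathcal{S},\underline{c}}(\gamma)}{\sqrt{\tfrac14+\gamma^2}}\cos\bigl(\theta_\gamma - \arctan(2\gamma)\bigr),\]
the series being square-summable by the variance formula. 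This is where the main obstacle lies: because the family is infinite, one has to justify exchanging the $Y\to\infty$ limit with the infinite sum of oscillatory contributions of zeros, and this relies precisely on the subgaussian tail bound already built into Theorem~\ref{Th_GeneDistLim}.

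Next, I would exploit the definition of self-sufficiency. If $\gamma_1\neq\gamma_2$ both lie in $\mathcal{Z}_{\mathcal{S},\underline{c}}^{\mathrm{LI}}$ with $\gamma_1=r\gamma_2$, $r\in\mathbf{Q}$, then $\gamma_1$ lies in the $\mathbf{Q}$-span of $\mathcal{Z}_{\mathcal{S},\underline{c}}\setminus\{\gamma_1\}$, contradicting self-sufficiency; similarly every $\gamma\in\mathcal{Z}_{\mathcal{S},\underline{c}}^{\mathrm{LI}}$ is $\mathbf{Q}$-linearly independent from $\mathcal{Z}_{\mathcal{S},\underline{c}}\setminus\mathcal{Z}_{\mathcal{S},\underline{c}}^{\mathrm{LI}}$. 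Kronecker--Weyl then shows that the projection of $\mathcal{T}$ onto the coordinates indexed by $\mathcal{Z}_{\mathcal{S},\underline{c}}^{\mathrm{LI}}$ is the full torus and is independent of the remaining coordinates. Consequently $\mu_{\mathcal{S},\underline{c}} = \mu^{\mathrm{LI}} \ast \mu^{\mathrm{rest}}$, where $\mu^{\mathrm{LI}}$ is the law of
\[X^{\mathrm{LI}}:=\sum_{\gamma\in\mathcal{Z}_{\mathcal{S},\underline{c}}^{\mathrm{LI}}} a_\gamma \cos(2\pi U_\gamma), \qquad a_\gamma := \frac{2\,\ord_{\mathcal{S},\underline{c}}(\gamma)}{\sqrt{\tfrac14+\gamma^2}},\]
with $(U_\gamma)$ an i.i.d. family uniform on $[0,1]$.

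Finally, I would show $\mathrm{supp}(\mu^{\mathrm{LI}})=\mathbf{R}$ by a truncation argument. The hypothesis rewrites as $\sum_{\gamma\in\mathcal{Z}_{\mathcal{S},\underline{c}}^{\mathrm{LI}}}|a_\gamma|=\infty$ (since $\sqrt{\tfrac14+\gamma^2}\asymp\gamma$ for large $\gamma$), while the variance formula of Theorem~\ref{Th_GeneDistLim} gives $\sum a_\gamma^2 < \infty$. Fix $x_0\in\mathbf{R}$ and $\epsilon>0$, enumerate $\mathcal{Z}_{\mathcal{S},\underline{c}}^{\mathrm{LI}}=\{\gamma_1,\gamma_2,\ldots\}$, and split $X^{\mathrm{LI}}=X_N+T_N$ where $X_N$ collects the first $N$ terms. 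Chebyshev's inequality makes $\mathbf{P}(|T_N|<\epsilon/2)>\tfrac12$ for $N$ sufficiently large. Each summand of $X_N$ follows an arcsine law supported on $[-|a_{\gamma_k}|,|a_{\gamma_k}|]$, so by convolution $X_N$ has support the full interval $[-S_N,S_N]$ with $S_N=\sum_{k\leq N}|a_{\gamma_k}|$. Choosing $N$ so that $S_N>|x_0|$ (possible by divergence) yields $\mathbf{P}(|X_N-x_0|<\epsilon/2)>0$, and the independence of $X_N$ and $T_N$ gives $\mathbf{P}(|X^{\mathrm{LI}}-x_0|<\epsilon)>0$. Hence $\mathrm{supp}(\mu_{\mathcal{S},\underline{c}}) \supset \mathrm{supp}(\mu^{\mathrm{LI}})+\mathrm{supp}(\mu^{\mathrm{rest}}) = \mathbf{R}$, which proves the claim.
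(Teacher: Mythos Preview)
Your proposal is correct and follows essentially the same route as the paper: decompose $\mu_{\mathcal{S},\underline{c}}$ as a convolution $\mu^{\mathrm{LI}}\ast\mu^{\mathrm{rest}}$ using the self-sufficiency of the zeros in $\mathcal{Z}_{\mathcal{S},\underline{c}}^{\mathrm{LI}}$, identify $\mu^{\mathrm{LI}}$ as the law of an independent sum $\sum_{\gamma}|a_\gamma|X_\gamma$, and then use the divergence of $\sum|a_\gamma|$ together with $\sum a_\gamma^2<\infty$ to force $\mathrm{supp}(\mu^{\mathrm{LI}})=\mathbf{R}$. The paper simply outsources these three steps to \cite[Lem.~3.8, Prop.~3.10]{MartinNg} and \cite[Lem.~6.2]{MartinNg} respectively, whereas you spell out the Kronecker--Weyl product structure and the truncation--Chebyshev argument for the support explicitly; your version is more self-contained but otherwise identical in strategy.
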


	In particular, under such conditions, we deduce that there cannot be a complete bias, that is to say that the function $E_{\mathcal{S},\underline{c}}$ changes sign infinitely many times, and we obtain Omega-results.
	\begin{cor}\label{Cor Omega result}
		Under the assumptions of Proposition~\ref{Prop lower bound tails} or of Proposition~\ref{Prop inclusive},
		we have
		\begin{align*}
			\sum_{p\leq x}\sum_{m\geq 0}c_{m}\lambda_{f_m}(p) + \sum_{m\geq 0}c_{m}\ord_{s=1}(L(f_m,s)) \Li(x)  = \Omega_{\pm}\big(\tfrac{\sqrt{x}}{\log x}\big).
		\end{align*}
		In particular, 	the function 
		$$x \mapsto \sum_{p\leq x}\sum_{m\geq 0}c_{m}\lambda_{f_m}(p) + \sum_{m\geq 0}c_{m}\ord_{s=1}(L(f_m,s)) \Li(x) $$ 
		has infinitely many sign changes.
	\end{cor}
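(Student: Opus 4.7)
The plan is to read off both $\Omega_{+}$ and $\Omega_{-}$ directly from the support properties of the limiting distribution $\mu_{\mathcal{S},\underline{c}}$ guaranteed by Theorem~\ref{Th_GeneDistLim}. Set $f(x) := \sum_{p\leq x}\sum_{m\geq 0}c_{m}\lambda_{f_m}(p) + \sum_{m\geq 0}c_{m}\ord_{s=1}(L(f_m,s))\Li(x)$, so that $E_{\mathcal{S},\underline{c}}(x) = \frac{\log x}{\sqrt{x}} f(x)$ and the conclusion is equivalent to $\limsup_{x\to\infty} E_{\mathcal{S},\underline{c}}(x) > 0$ together with $\liminf_{x\to\infty} E_{\mathcal{S},\underline{c}}(x) < 0$.

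The first key step is to note that under either of the two hypotheses we have, for every $R>0$,
\[\mu_{\mathcal{S},\underline{c}}((R,\infty)) > 0 \quad \text{and} \quad \mu_{\mathcal{S},\underline{c}}((-\infty,-R)) > 0.\]
Under Proposition~\ref{Prop lower bound tails} this is the explicit lower bound $\gg \exp(-\exp bR)$, which is strictly positive; under Proposition~\ref{Prop inclusive} it follows from $\mathrm{supp}(\mu_{\mathcal{S},\underline{c}}) = \mathbf{R}$, which forces every non-empty open set to have positive $\mu$-measure.

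The second step is the standard passage from positive measure on $(R,\infty)$ to $\limsup E_{\mathcal{S},\underline{c}}(x)\geq R$. Fix $R>0$ and pick a bounded Lipschitz function $g \geq 0$ supported in $(R,\infty)$ with $\int g\,d\mu_{\mathcal{S},\underline{c}} > 0$ (for instance a continuous tent function on a sub-interval on which $\mu_{\mathcal{S},\underline{c}}$ puts positive mass). By the defining property of the limiting logarithmic distribution,
\[\lim_{Y\to\infty}\frac{1}{Y}\int_{0}^{Y} g(E_{\mathcal{S},\underline{c}}(e^{y}))\,\diff y = \int_{\mathbf{R}} g\,\diff\mu_{\mathcal{S},\underline{c}} > 0,\]
so the set $\{y : E_{\mathcal{S},\underline{c}}(e^{y}) > R\}$ has positive upper logarithmic density. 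In particular it is unbounded, giving $\limsup_{x\to\infty} E_{\mathcal{S},\underline{c}}(x) \geq R$. Since $R$ is arbitrary, $\limsup E_{\mathcal{S},\underline{c}} = +\infty$, which yields $f(x) = \Omega_{+}(\sqrt{x}/\log x)$. The symmetric argument applied to $(-\infty,-R)$ yields $f(x) = \Omega_{-}(\sqrt{x}/\log x)$. The infinitely-many-sign-changes assertion is then immediate since a function that is both $\Omega_{+}$ and $\Omega_{-}$ of a positive function must change sign infinitely often.

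There is no real obstacle here; the only mild subtlety is choosing a bounded Lipschitz test function $g$ (rather than the indicator $\mathbf{1}_{(R,\infty)}$, which is not admissible in the limiting distribution identity). This is handled by approximating from below with a non-negative Lipschitz bump whose $\mu_{\mathcal{S},\underline{c}}$-integral is still positive, which is possible precisely because $\mu_{\mathcal{S},\underline{c}}((R,\infty))>0$.
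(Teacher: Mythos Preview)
Your argument is correct and is precisely the natural deduction the paper leaves implicit: the corollary is stated without a separate proof and is meant to follow immediately from the fact that, under either Proposition~\ref{Prop lower bound tails} or Proposition~\ref{Prop inclusive}, the limiting distribution $\mu_{\mathcal{S},\underline{c}}$ assigns positive mass to every half-line $(R,\infty)$ and $(-\infty,-R)$. Your care in replacing the indicator $\mathbf{1}_{(R,\infty)}$ by a nonnegative Lipschitz bump is exactly the small technical point needed to invoke the definition of the limiting logarithmic distribution, and the passage from positive upper logarithmic density of $\{y:E_{\mathcal{S},\underline{c}}(e^{y})>R\}$ to $\limsup E_{\mathcal{S},\underline{c}}\geq R$ is clean.
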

	
	Note that, as suggested in \cite{BestTrud}, it may be possible to estimate the implicit constants in the Omega bounds given enough explicit information on the zeros of the involved $L$-functions. Moreover 
	Finally, we observe that in the case of a non-real\footnote{The case of a real zero of maximal real part would give only one direction of Omega-result as in \cite[Lem. 2.1]{FiorilliMartin}.} counter-example to the Generalized Riemann Hypothesis, oscillations results are more easily obtained thanks to Landau's Theorem.
	
	\begin{prop}\label{Prop oscillation without GRH}
		Let $\mathcal{S} =\lbrace L(f_m,\cdot) :  m\geq 0 \rbrace$ be a sequence of real analytic $L$-functions of degree $d_m$ and analytic conductor $\mathfrak{q}(f_m)$, 
		and let $\underline{c} = (c_{m})_{m\geq 0}$ be a sequence of real numbers such that
		the series $$\sum_{m\geq 0} \lvert c_m\rvert  d_m \log \mathfrak{q}(f_m)$$ is convergent.
		Let $\Theta = \sup\lbrace \re(\rho) : \ord_{\mathcal{S},\underline{c}}(\rho) \neq 0\rbrace $. Assume that $\Theta>\frac12$ and that for each $m\geq 0$, one has $L(f_m,\Theta) \neq 0$.
Then for any $\epsilon >0$,	we have
\begin{align*}
	\sum_{p\leq x}\sum_{m\geq 0}c_{m}\lambda_{f_m}(p) + \sum_{m\geq 0}c_{m}\ord_{s=1}(L(f_m,s)) \Li(x)  = \Omega_{\pm}(x^{\Theta -\epsilon}).
\end{align*}
	\end{prop}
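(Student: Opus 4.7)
The plan is to argue by contradiction using Landau's theorem, applied to a carefully engineered non-negative function. Write
\[A(x) := \sum_{p \leq x} \sum_{m \geq 0} c_m \lambda_{f_m}(p) + \sum_{m \geq 0} c_m \ord_{s=1}(L(f_m, s)) \Li(x),\]
and suppose, for contradiction, that $A(x) \geq -C x^{\Theta - \epsilon}$ for all $x$ sufficiently large; this would rule out $\Omega_-(x^{\Theta - \epsilon})$, and the $\Omega_+$ direction will be obtained by the same argument applied to the family with coefficients $-c_m$. My first step would be to transfer this one-sided bound, via standard partial summation, to the weighted prime-power sum $\psi_{\mathcal{S},\underline{c}}(x) := \sum_{p^k \leq x} \sum_{m\geq 0} c_m \lambda_{f_m}(p^k) \log p$. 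The contribution of prime powers $p^k$ with $k\geq 2$ is $O(x^{1/2} \sum_m |c_m| d_m)$, and the logarithmic factor appearing in the passage between $\pi$-style and $\psi$-style sums can be absorbed by shrinking $\epsilon$ slightly; since $\Theta > 1/2$ and we aim to prove $\Omega_\pm(x^{\Theta - \epsilon})$ for every $\epsilon > 0$, both losses are harmless. Writing $M := -\sum_{m \geq 0} c_m \ord_{s=1}(L(f_m, s))$, the contradiction hypothesis becomes $\psi_{\mathcal{S},\underline{c}}(x) - Mx \geq -C' x^{\Theta - \epsilon}$ for $x$ large.

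I would then introduce the eventually non-negative function $g(x) := \psi_{\mathcal{S},\underline{c}}(x) - Mx + C' x^{\Theta - \epsilon}$ and compute its Mellin transform. Setting $F(s) := -\sum_{m\geq 0} c_m \frac{L'}{L}(f_m, s)$, which converges absolutely for $\re(s) > 1$ by the summability of $\sum_m |c_m| d_m \log \mathfrak{q}(f_m)$, a Fubini-style computation yields $\int_1^{\infty} \psi_{\mathcal{S},\underline{c}}(x) x^{-s-1} \diff x = F(s)/s$ and hence
\[\int_1^{\infty} g(x) \, x^{-s-1} \diff x = \frac{F(s)}{s} - \frac{M}{s-1} + \frac{C'}{s - \Theta + \epsilon}, \qquad \re(s) > 1.\]
The right-hand side extends meromorphically to $\re(s) > 0$: the pole of $F(s)/s$ at $s = 1$ is cancelled by $-M/(s-1)$, and the remaining singularities are the simple pole at $s = \Theta - \epsilon$ together with poles at each $\rho$ for which $\ord_{\mathcal{S},\underline{c}}(\rho) \neq 0$.

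The decisive step is then Landau's theorem: as $g \geq 0$ eventually, the abscissa of convergence of $\int_1^{\infty} g(x) x^{-s-1} \diff x$ is itself a real singularity of the analytic continuation, and so equals the rightmost real singularity of the right-hand side above. I would choose $\epsilon > 0$ arbitrarily close to $0$ so that the combined family has no real $\rho$ with $\ord_{\mathcal{S},\underline{c}}(\rho) \neq 0$ and $\re(\rho) \in (\Theta - \epsilon, \Theta]$: this is available because $L(f_m, \Theta) \neq 0$ for every $m$ forbids a real zero exactly at $\Theta$, while the summability of $\sum_m |c_m| d_m \log \mathfrak{q}(f_m)$, together with the zero-counting inequality $\ord(\rho, m) \ll \log(\mathfrak{q}(f_m)(|\rho|+3)^{d_m})$ recalled in Section~\ref{sec General}, controls the density of zeros of the combined family in any compact region. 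With this choice, Landau's theorem forces the Mellin transform to be holomorphic on $\re(s) > \Theta - \epsilon$. But by the definition of $\Theta$ as a supremum, there exists $\rho^*$ with $\ord_{\mathcal{S},\underline{c}}(\rho^*) \neq 0$ and $\re(\rho^*) > \Theta - \epsilon$; the choice of $\epsilon$ forces $\rho^*$ to be non-real, so the meromorphic formula above has a genuine pole at $\rho^*$ strictly inside $\re(s) > \Theta - \epsilon$, contradicting the claimed holomorphy.

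The main obstacle in this plan is precisely the one addressed in the previous paragraph: with $\mathcal{S}$ comprising infinitely many $L$-functions, the zeros of the combined family could a priori accumulate toward $\Theta$ from below, potentially through real zeros, which would prevent the identification of $s = \Theta - \epsilon$ as the rightmost real singularity of the Mellin transform. Overcoming it requires combining the individual non-vanishing $L(f_m, \Theta) \neq 0$ with the summability of the family coefficients to ensure that a sequence of admissible $\epsilon$ tending to $0$ can be selected, which is what ultimately yields $\Omega_\pm(x^{\Theta - \epsilon})$ for every $\epsilon > 0$.
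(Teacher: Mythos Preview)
Your overall strategy---add $\pm x^{\Theta-\epsilon}$, take a Mellin transform, and invoke Landau's theorem---is exactly the paper's approach; working with $-\sum_m c_m L'/L(f_m,s)$ (poles at zeros) rather than $\sum_m c_m\log L(f_m,s)$ (logarithmic singularities) is an inessential variation.

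There is, however, a genuine gap in your ``choice of $\epsilon$'' step. You want to pick $\epsilon>0$ so that no real $\rho$ with $\ord_{\mathcal{S},\underline{c}}(\rho)\neq 0$ lies in $(\Theta-\epsilon,\Theta]$, and you justify this via the summability of $\sum_m|c_m|d_m\log\mathfrak{q}(f_m)$ together with the zero-counting bound. But that only controls the \emph{weighted} count $\sum_m|c_m|\cdot\#\{\rho: L(f_m,\rho)=0,\ \rho\in K\}$ in a compact $K$; it says nothing about the unweighted set of zeros, which, with infinitely many $L$-functions in play, can perfectly well be infinite and accumulate at $\Theta$ from below along the real axis. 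In that scenario no admissible $\epsilon$ exists, and your identification of $\Theta-\epsilon$ as the rightmost real singularity collapses.

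The fix is that this step is unnecessary. Once Landau tells you that the abscissa of convergence $\sigma_c$ is a real singularity and that the Mellin transform is holomorphic on $\re(s)>\sigma_c$, the existence of singularities with real part arbitrarily close to $\Theta$ (guaranteed by the definition of $\Theta$ as a supremum) already forces $\sigma_c\geq\Theta$. On the other hand, every \emph{real} singularity lies strictly below $\Theta$: none lies above $\Theta$ by definition of $\Theta$, and the hypothesis $L(f_m,\Theta)\neq 0$ for all $m$ rules out $s=\Theta$ itself. Hence $\sigma_c<\Theta$, a contradiction. This is precisely the (terse) route the paper takes, and it bypasses the accumulation issue entirely; the artificial pole at $\Theta-\epsilon$ only serves to encode the target exponent in the definition of $g$, not to pin down $\sigma_c$.
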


	\section{Distribution of the angles of Gaussian primes}\label{sec cor Gaussian case}
	
	The representation of a prime number as a sum of two squares is explained in the ring of Gaussian integers $\mathbf{Z}[i]$ where the prime numbers $p\equiv 1 \bmod 4$ split as $p =(a + i2b)(a - i2b)$, while the prime numbers $p \equiv 3 \bmod 4$ are inert. 
	The aim of Conjectures~\ref{Conj bias even odd} and~\ref{Conj bias A mod 4} is to understand fine statistics on the distribution of Gaussian primes in the plane.
	To do so, we study the distribution of the angles of the Gaussian primes which are defined as follows.
	For any $\mathfrak p \neq (1+i)$ prime ideal in $\mathbf{Z}[i]$, there exists a unique Gaussian integer $a + 2ib \equiv 1 \bmod (2+2i)$ generating $\mathfrak p$ (starting from $a,b \geq 0$ then either $a + 2ib \equiv 1 \bmod (2+2i)$ or $-a - 2ib \equiv 1 \bmod (2+2i)$).
	 Then we say that the angle of $\mathfrak{p}$ is the argument of this uniquely determined generator,
	\begin{equation}
	\label{Equation def angle}
	\theta_{\mathfrak{p}} = \arg(a + 2ib)  \in (-\pi,\pi] \text{ where } \mathfrak{p} = (a+2ib), \quad a+2ib \equiv 1 \bmod (2+2i).
	\end{equation}
	Note that, if $\mathfrak{p}$ is generated by $a + 2ib \equiv 1 \bmod (2+2i)$, then one also has $a - 2ib \equiv 1 \bmod (2+2i)$ and this latter Gaussian integer generates $\overline{\mathfrak{p}}$.
	So for a rational prime $p \equiv 1 \bmod 4$, one can define its Gaussian angle up to its sign, $\theta_p = \pm \theta_{\mathfrak{p}} \in [0,\pi]$, where $\mathfrak{p}\mid p$.
	We observe also that this choice is natural : the number of $\mathbf{F}_p$ points of the elliptic curve with affine model $y^2 = x^3 -x$ is exactly given by $p+1 - 2\sqrt{p}\cos(\theta_{p})$.
	Note finally that with this definition, the natural angle associated to an inert prime $p \equiv 3 \bmod 4$ is $\theta_p = \pi$. 
	
	Hecke proved in \cite{Hecke} that the angles of Gaussian primes equidistribute on the circle.
	In particular, for $0\leq \alpha < \beta \leq \pi$, one has
	\begin{equation*}
	\lim_{X\rightarrow\infty} \frac{\lvert p \leq X : p\equiv 1 \bmod 4, \alpha < \theta_p < \beta \rvert}{\lvert p \leq X : p\equiv 1 \bmod 4 \rvert} = \frac{\beta - \alpha}{\pi}.
	\end{equation*}
	The limit depends only on the length of the interval. 
In the context of Conjecture~\ref{Conj bias even odd}, we are interested in the error term in this result, and in particular to show how it depends on the interval $[\alpha,\beta]$.
Our main heuristic for this conjecture is the following result.	
		
	\begin{theo}\label{Th race Gaussian primes}
		Assume the Generalized Riemann Hypothesis.
		Let $\phi$ be a $2\pi$-periodic even function on $\mathbf{R}$, 
		with Fourier coefficients $\hat{\phi}(m) :=  \frac{1}{2\pi}\int_{-\pi}^{\pi} \phi(t)e^{2i\pi mt} \diff t$ for $m \in \mathbf{Z}$,
		satisfying $\hat{\phi}(m) \ll \lvert m\rvert^{-1 -\epsilon}$ for some $\epsilon >0$.
		Then the function 
		\begin{align*}
		E_{\phi}(x) := \frac{\log x}{\sqrt {x}}\Bigg(\sum_{\substack{p\leq x \\ p \equiv 1 \bmod{4}}} \phi(\theta_p) - \Li(x)\int_{0}^{\pi}\phi(t) \frac{\diff t}{2\pi} \Bigg)
		\end{align*}  
		admits a limiting logarithmic distribution $\mu_{\phi}$ as $x\rightarrow\infty$.
		
		Moreover, let $\xi$ be the Hecke character defined in~\eqref{def xi} and
		assume that the vanishing at $s= \frac12$ of the Hecke $L$-functions $L(s,\xi^m)$ of the powers $\xi^m$ is exactly given by the sign of its functional equation.
		Then
		the average value of $\mu_{\phi}$ is equal to 
			$$ \frac{-\hat{\phi}(0)}{2} - \frac{\phi(0) + \phi(\pi)}{4} +\frac{1}{2\pi} \int_{-\pi}^{\pi} \phi(t)\frac{\cos(t)}{\cos(2t)} \diff t, $$  
		where the integral is understood as the Cauchy principal value.
	\end{theo}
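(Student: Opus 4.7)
The plan is to deduce Theorem~\ref{Th race Gaussian primes} from Theorem~\ref{Th_GeneDistLim} by expressing $\sum_{p \leq x,\, p \equiv 1 \bmod 4} \phi(\theta_p)$ via a Fourier expansion of $\phi$ in terms of Hecke $L$-functions of $\mathbf{Q}(i)$. For each $m \in \mathbf{Z}$, let $\psi_m$ denote the Hecke character of $\mathbf{Z}[i]$ defined on prime ideals $\mathfrak{p} \ne (1+i)$ by $\psi_m(\mathfrak{p}) = e^{im\theta_\mathfrak{p}}$, using the canonical generator $\equiv 1 \bmod (2+2i)$. The associated $L$-function $L(s, \psi_m)$ is a real analytic $L$-function of degree $2$ over $\mathbf{Q}$, entire and cuspidal (dihedral) for $m \ne 0$, and for $m = 0$ equal to $\zeta_{\mathbf{Q}(i)}(s)$ with a simple pole at $s=1$; its analytic conductor satisfies $\log \mathfrak{q}(\psi_m) \ll \log(|m|+2)$. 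Since for split $p \equiv 1 \bmod 4$ one has $\lambda_{\psi_m}(p) = 2\cos(m\theta_p)$ while the remaining primes contribute a bounded error, expanding $\phi$ via its Fourier series and invoking evenness of $\hat\phi$ yields
\[
\sum_{\substack{p \leq x \\ p \equiv 1 \bmod 4}} \phi(\theta_p) \;=\; \frac{\hat\phi(0)}{2} \sum_{p \leq x} \lambda_{\psi_0}(p) + \sum_{m \geq 1} \hat\phi(m) \sum_{p \leq x} \lambda_{\psi_m}(p) + O(\log x).
\]
With $c_0 = \hat\phi(0)/2$ and $c_m = \hat\phi(m)$ for $m \geq 1$, the decay hypothesis $\hat\phi(m) \ll |m|^{-1-\epsilon}$ together with the conductor bound ensures the summability condition of Theorem~\ref{Th_GeneDistLim}.

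The only contribution to the $\Li(x)$-term produced by Theorem~\ref{Th_GeneDistLim} comes from $\ord_{s=1} L(s, \psi_0) = -1$, yielding $-c_0 \Li(x) = -\tfrac{\hat\phi(0)}{2}\Li(x)$, which matches exactly the subtraction $\Li(x)\int_0^\pi \phi(t)\tfrac{\diff t}{2\pi} = \tfrac{\hat\phi(0)}{2}\Li(x)$ in the definition of $E_\phi$. Consequently $E_\phi(x)$ agrees with the function $E_{\mathcal{S}, \underline{c}}(x)$ of Theorem~\ref{Th_GeneDistLim} (for $\mathcal{S} = \{L(s,\psi_m) : m \geq 0\}$) up to an error that is negligible in the logarithmic mean, so $\mu_\phi = \mu_{\mathcal{S},\underline{c}}$ exists as a limiting logarithmic distribution.

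For the average value, I apply the explicit formula
\[
\mathbb{E}(\mu_\phi) = \sum_{m \geq 0} c_m \Bigl(\ord_{s=1} L(\psi_m^{(2)}, s) - 2\ord_{s=1/2} L(s, \psi_m)\Bigr)
\]
provided by Theorem~\ref{Th_GeneDistLim}. For the second-moment $L$-function, I use the dihedral decomposition $\pi_m \otimes \pi_m = \pi_{2m} \boxplus \mathbf{1} \boxplus \chi_4$, where $\pi_m = \mathrm{Ind}_{\mathbf{Q}(i)}^\mathbf{Q}\psi_m$ and $\chi_4$ is the non-trivial Dirichlet character modulo $4$, to read off $\ord_{s=1}$ from the Riemann zeta and Dirichlet factors that arise. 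Under the assumption of the theorem, $\ord_{s=1/2} L(s, \psi_m) = (1 - \epsilon(\psi_m))/2$ is controlled by the sign $\epsilon(\psi_m) \in \{\pm 1\}$ of the functional equation, which I compute by combining local epsilon factors at $\infty$ (of the form $i^{|m|}$) and at the ramified place $(1+i)$ (a Gauss-sum contribution depending on $m \bmod 4$); the resulting sequence of coefficients $A_m := \ord_{s=1}L(\psi_m^{(2)}) - 2\ord_{s=1/2}L(\psi_m)$ turns out to be periodic of period dividing $8$ in $m$. The final step is to recognize $\sum_m c_m A_m$ term by term as the Fourier expansion of the claimed closed form: the $m=0$ contribution together with the rearrangement of even-$m$ terms produces $-\hat\phi(0)/2 - (\phi(0)+\phi(\pi))/4$, while the odd-$m$ terms reproduce the Cauchy principal value $\tfrac{1}{2\pi}\int_{-\pi}^\pi \phi(t)\tfrac{\cos t}{\cos 2t}\diff t$, whose PV Fourier coefficients I compute by residue calculus at the poles $t = \pm\pi/4, \pm 3\pi/4$ of $\cos t/\cos 2t$. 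The main obstacle lies in this last step: precisely matching the arithmetic periodicity of $\epsilon(\psi_m)$ to the residues of $\cos t/\cos 2t$, and justifying that the resulting Fourier series converges in the principal-value (Abel-summability) sense to the stated closed form.
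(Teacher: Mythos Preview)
Your proposal is correct and follows essentially the same route as the paper: Fourier-expand $\phi$, recognise $2\cos(m\theta_p)$ as $\lambda_{\psi_m}(p)$ for the Hecke characters $\psi_m$ (the paper calls them $\xi_m$), verify the summability hypothesis of Theorem~\ref{Th_GeneDistLim} via the conductor bound, and then compute the mean by evaluating $\ord_{s=1}L(\psi_m^{(2)},\cdot)$ and the root numbers $W(\psi_m)$, which are $8$-periodic in $m$ (the paper finds $W=-1$ exactly for $m\equiv 5,7\bmod 8$).

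One point of care: the ``second moment'' $L$-function in Theorem~\ref{Th_GeneDistLim} is not $L(\pi_m\otimes\pi_m,s)$ but $L(\mathrm{Sym}^2\pi_m,s)/L(\wedge^2\pi_m,s)$, so your tensor decomposition $\pi_m\otimes\pi_m=\pi_{2m}\boxplus\mathbf{1}\boxplus\chi_4$ alone does not suffice; you must also identify $\wedge^2\pi_m=\det\pi_m$, which equals $\chi_4$ for $m$ even and is trivial for $m$ odd. This parity dependence is exactly what makes $\ord_{s=1}L(\psi_m^{(2)},\cdot)$ equal to $-1$ for even $m\ge 1$ and $+1$ for odd $m$ (the paper obtains this by a direct Euler-product computation). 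With that in hand, your even/odd regrouping of $\sum_m c_m A_m$ is equivalent to the paper's computation, and your proposed residue calculation for the PV Fourier coefficients of $\cos t/\cos(2t)$ plays the same role as the paper's elementary exponential-sum Lemma~\ref{Lem exp sum}.
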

	Theorem~\ref{Th race Gaussian primes} is just an $\epsilon$ away from Conjecture~\ref{Conj bias even odd}. Indeed, we would like to choose $\phi$ to be a step function but the $m$-th Fourier coefficient of such a function is of size $\lvert m\rvert^{-1}$. This will be discussed further in Section~\ref{Sec Heuristic}.

However, as the function $\phi$ can be chosen very general, it actually gives fine information on the distribution of the angles $\theta_p$. 	
Figure~\ref{fig hist angles} shows the difference between a histogram of the distribution of the angles $\theta_{p}$ and equidistribution. We observe some irregularities in the otherwise relatively well equidistributed behaviour happening at $\frac{\pi}{4}$ and $\frac{3\pi}{4}$ corresponding to the poles of $\frac{\cos(t)}{\cos(2t)}$. 
This irregularity might only be due to our ``unfolding'' of the angle $\theta_p$ (observe that the angle $2\theta_p$ or even $4\theta_p$ is more usually studied in the literature \cite{Kubilius50,Coleman,RudnickWaxman} and does not exhibit such a bias). We note that our choice of $a + 2ib \equiv 1 \bmod (2+2i)$ imply that the only possibilities for $\lvert a\rvert$ and $\lvert 2b\rvert$ to be very close to each other are $a= \pm2b+1$  while $a = \pm 2b -1$ are excluded. This could explain the behaviour of the distribution around  $\frac{\pi}{4}$ and $\frac{3\pi}{4}$.

\begin{figure}
	\includegraphics[scale=0.65]{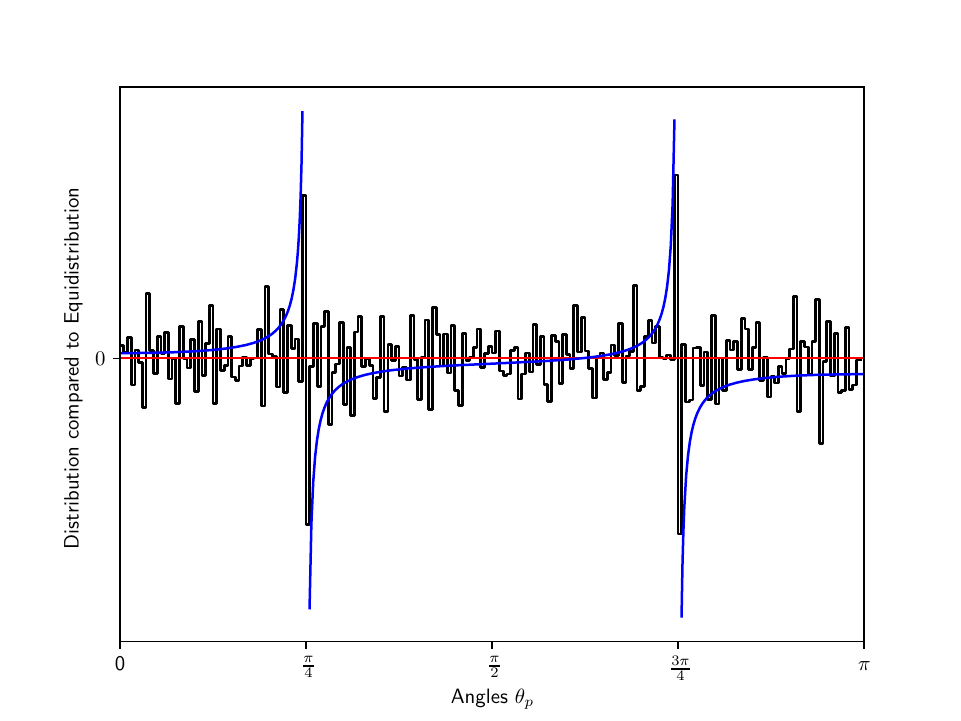}
	\caption{Relative distribution of the angles $\theta_{p}$ for $p\leq 10^8$ : we count the number of angles $\theta_p$ in $200$ subintervals of $[0,\pi]$ and withdraw the mean value; in red equidistribution;  in blue the ``secondary term'' $\frac{\cos x}{\cos 2x} - \frac12$.}
	\label{fig hist angles}
\end{figure}
		
		In the case  $\phi$ is $\pi$-periodic with mean value $0$, we find that the mean value of $\mu_{\phi}$ is equal to $\frac{- \phi(\pi)}{2}$, this indicates a bias in the distribution of the angles $\theta_p$ due to the fact that our sum defining $E_{\phi}$ does not include the inert primes $p\equiv 3 \bmod 4$.

		Note finally that this is a case where the function field analogue differs from the original question. Indeed, \cite[Th. 1.8]{Perret-Gentil} shows that there is a bias in the distribution of the analogue of angles of Gaussian primes in function fields that is in the direction of sectors parametrized by non-squares. Such a phenomenon does not seem to appear here.\\

In the context of Conjecture~\ref{Conj bias A mod 4}, let us first 
note that for $p = a^2 + 4b^2$ with $a+2ib \equiv 1 \bmod (2+2i)$ then $p \equiv 1 \bmod 8 \Leftrightarrow a \equiv 1 \bmod 4$ and $p \equiv 5 \bmod 8 \Leftrightarrow a \equiv -1 \bmod 4$, 
so we have 
\begin{align*}
\sum_{p = a^2 + 4b^2 \leq x} \Big(\mathbf{1}_{\lvert a\rvert \equiv 1 \bmod 4}(p) -  \mathbf{1}_{\lvert a\rvert \equiv -1 \bmod 4}(p) \Big)&= \sum_{\substack{p\leq x \\ p \equiv 1 \bmod{8} }} \phi(\theta_p) - \sum_{\substack{p\leq x \\ p \equiv 5 \bmod{8} }} \phi(\theta_p)
\\ &= \sum_{\substack{p\leq x \\ p \equiv 1 \bmod{4} }} \phi(\tilde{\theta}_p),	
\end{align*}
where $\phi = \mathbf{1}_{(0,\frac{\pi}{2})}- \mathbf{1}_{(\frac{\pi}{2},\pi)}$ (so that for $a\equiv 1 \bmod 4$ one counts $+1$ when $a$ is positive, and $-1$ when it is negative; and the other way round for $a\equiv -1 \bmod 4$) and  we define $\tilde{\theta}_p = \begin{cases}
	\theta_{p} & \text{ if } p \equiv 1 \bmod{8} \\
	\pi - 	\theta_{p} & \text{ if } p \equiv 5 \bmod{8} 
\end{cases}$.     
Similarly to Theorem~\ref{Th race Gaussian primes}, we have an analogous result in the case $\phi$ is just slightly smoother than a step function.  

\begin{theo}\label{Th race Gaussian primes A mod 4}
	Assume the Generalized Riemann Hypothesis.
	Let $\phi$ be a $2\pi$-periodic even function on $\mathbf{R}$
	with Fourier coefficients $\hat{\phi}(m) :=  \frac{1}{2\pi}\int_{-\pi}^{\pi} \phi(t)e^{2i\pi mt} \diff t$ for $m \in \mathbf{Z}$,
	satisfying $\hat{\phi}(m) \ll \lvert m\rvert^{-1 -\epsilon}$ for some $\epsilon >0$.
	Then the function 
	\begin{align*}
		F_{\phi}(x) := \frac{\log x}{\sqrt {x}}\Bigg(\sum_{\substack{p\leq x \\ p \equiv 1 \bmod{8} }} \phi(\theta_p) - \sum_{\substack{p\leq x \\ p \equiv 5 \bmod{8} }} \phi(\theta_p) \Bigg)
	\end{align*}  
	admits a limiting logarithmic distribution $\nu_{\phi}$ as $x\rightarrow\infty$.
	
	Moreover, let $\psi_m$ be the Hecke characters defined in~\eqref{def psi_m} for $m\geq 0$, and
	assume that the vanishing at $s= \frac12$ of the Hecke $L$-functions $L(s,\psi_m)$ is exactly given by the sign of its functional equation.
	Then the average value of $\nu_{\phi}$ is equal to 
	$$\frac{-\hat{\phi}(0)}{2} -\frac{\phi(0) + \phi(\pi)}{4} +\frac{1}{2\pi} \int_{-\pi}^{\pi} \phi(t)\frac{1}{2\cos(t)} \diff t,$$
			where the integral is understood as the Cauchy principal value.
\end{theo}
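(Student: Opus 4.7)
The plan is to follow the same strategy as for Theorem~\ref{Th race Gaussian primes}, with the additional feature that the Hecke characters in play must also detect residues modulo~$8$. Since $\phi$ is even and $2\pi$-periodic with the stated decay, I would first expand $\phi$ as a cosine series and rewrite
\[\sum_{\substack{p\leq x\\ p\equiv 1\bmod 8}}\phi(\theta_p)-\sum_{\substack{p\leq x\\ p\equiv 5\bmod 8}}\phi(\theta_p)=\sum_{m\geq 0}a_m\sum_{\substack{p\leq x\\ p\equiv 1\bmod 4}}\chi_8(p)\cos(m\theta_p),\]
where $\chi_8$ is the real Dirichlet character modulo~$8$ with $\chi_8(1)=1,\chi_8(5)=-1$ and the $a_m$ are proportional to $\hat{\phi}(m)$. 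Each inner sum is then recognised as $\tfrac12\sum_{p\leq x}\lambda_{\xi_m}(p)$ for a Hecke character $\xi_m$ of $\mathbf{Q}(i)$ whose angular component is $\alpha\mapsto(\alpha/|\alpha|)^m$ and whose finite part has conductor a sufficiently high power of $(1+i)$, chosen so that the restriction to $\mathbf{Z}$ recovers $\chi_8$: on a split prime $p=\mathfrak{p}\overline{\mathfrak{p}}$ one obtains $\lambda_{\xi_m}(p)=2\chi_8(p)\cos(m\theta_p)$, the inert primes $p\equiv 3\bmod 4$ contribute $0$, and the exceptional prime $(1+i)$ is killed by the ramification at the conductor.

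Second, I would apply Theorem~\ref{Th_GeneDistLim} to the family $\mathcal{S}=\{L(\xi_m,\cdot)\}_{m\geq 0}$ with coefficients $c_m$ proportional to $\hat{\phi}(m)$. Each $L(\xi_m,s)$ is a degree-$2$ analytic $L$-function over $\mathbf{Q}$, whose analytic conductor grows polynomially in $|m|$, so the hypothesis $\hat{\phi}(m)\ll|m|^{-1-\epsilon}$ ensures convergence of $\sum_m|c_m|d_m\log\mathfrak{q}(\xi_m)$. Theorem~\ref{Th_GeneDistLim} then delivers the limiting logarithmic distribution $\nu_\phi$ together with its mean expressed through the orders of vanishing of $L(\xi_m,s)$ at $s=\tfrac12$ and of $L(\xi_m^{(2)},s)$ at $s=1$.

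The substantive part is the explicit evaluation of that mean. The $\ord_{s=1}L(\xi_m^{(2)},s)$ contribution is nonzero only at the single frequency where $\xi_m^{(2)}$ is principal, producing both the classical Chebyshev term $-\hat{\phi}(0)/2$ for the race $1$ versus $5$ modulo~$8$ and, once one symmetrises the Fourier series carefully and accounts for the exceptional prime above $(1+i)$, the boundary correction $-\tfrac14(\phi(0)+\phi(\pi))$. Under the central-vanishing hypothesis, $\ord_{s=1/2}L(\xi_m,s)$ equals $1$ or $0$ according to whether the root number $\varepsilon(\xi_m)$ is $-1$ or $+1$, so the remaining contribution to the mean is $-\sum_m c_m\varepsilon(\xi_m)$. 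The hard part will be twofold: first, compute these root numbers by tracking the Gauss sum of $\chi_8$ together with the local sign at the ramified prime $(1+i)$, pinning down the alternating pattern in $m$; and second, justify rigorously the conditionally convergent Fourier identity
\[\sum_{k\geq 0}(-1)^k\cos\bigl((2k+1)t\bigr)=\frac{1}{2\cos t}\]
in the principal-value sense and exchange it with $\sum_m\hat{\phi}(m)$ to identify $-\sum_m c_m\varepsilon(\xi_m)$ with $\frac{1}{2\pi}\int_{-\pi}^{\pi}\phi(t)\frac{\diff t}{2\cos t}$. This is the step where the contrast with Theorem~\ref{Th race Gaussian primes} becomes transparent: the $\chi_8$-twist shifts the alternating pattern, which in Theorem~\ref{Th race Gaussian primes} was supported on frequencies $\equiv 1,3\bmod 4$ and produced the kernel $\frac{\cos t}{\cos 2t}$, onto all odd residues, producing the kernel $\frac{1}{2\cos t}$.
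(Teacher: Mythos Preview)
Your overall strategy is the paper's: introduce Hecke characters $\psi_m$ on $\mathbf{Z}[i]$ of conductor dividing $(4)$ whose finite part detects $p\bmod 8$, apply Theorem~\ref{Th_GeneDistLim}, then evaluate the mean via the orders at $s=1$ of $L(\psi_m^{(2)},s)$ and the root numbers $W(\psi_m)$. However, there is a genuine error in your accounting of the second-moment term. You assert that $\ord_{s=1}L(\xi_m^{(2)},s)$ is nonzero only at the single frequency where $\xi_m^{(2)}$ is principal; this is false. For every $m\ge 1$ the inert primes $p\equiv 3\bmod 4$ contribute the factor $\prod_{p\equiv 3}(1-(-1)^m p^{-s})^{-2}$ to the second-moment Euler product, so that (exactly as in the proof of Theorem~\ref{Th race Gaussian primes}) $L(\psi_m^{(2)},s)$ carries a factor $\zeta(s)/L(s,\chi_4)$ when $m$ is even and $L(s,\chi_4)/\zeta(s)$ when $m$ is odd. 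Hence $\ord_{s=1}L(\psi_m^{(2)},s)=(-1)^{m+1}$ for every $m\ge1$, and the full second-moment sum is $-c_0(\phi)-\sum_{m\ge0}(-1)^m c_m(\phi)=-\hat{\phi}(0)-\phi(\pi)$, not just the $m=0$ term.

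This also means your attribution of the boundary piece $-\tfrac14(\phi(0)+\phi(\pi))$ is wrong: it is not produced by the second moment and the ramified prime. In the paper's computation the second moment yields $-\tfrac12(\hat{\phi}(0)+\phi(\pi))$, while the root-number calculation gives $W(\psi_m)=-1$ precisely when $m\equiv 3\bmod 4$ (not an alternating pattern over all odd $m$), so the central-zero contribution is $-\sum_{m\equiv 3\bmod 4}(\hat{\phi}(m)+\hat{\phi}(-m))$. The relevant distributional identity is therefore $\sum_{k\ge0}2\cos((4k+3)t)=-\tfrac{1}{2\cos t}$ away from $\tfrac{\pi}{2}\mathbf{Z}$ (obtained via Lemma~\ref{Lem exp sum}), together with a point-mass correction $\tfrac14(\phi(0)-\phi(\pi))$ at $t\in\tfrac{\pi}{2}\mathbf{Z}$; combining this with the second-moment term is what produces $-\tfrac14(\phi(0)+\phi(\pi))$. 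Your alternating identity $\sum_{k\ge0}(-1)^k\cos((2k+1)t)=\tfrac{1}{2\cos t}$ is correct as a formula but does not match the actual root-number pattern, and the expression ``$-\sum_m c_m\varepsilon(\xi_m)$'' for the central-zero contribution should be $-\sum_m c_m(1-W(\psi_m))$.
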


 Taking $\phi$ such that $\phi(\pi-\theta)= - \phi(\theta)$ in Theorem~\ref{Th race Gaussian primes A mod 4} gives fine information on the distribution of the angles $\tilde{\theta}_p$. 	
Figure~\ref{fig hist angles Amod4} shows the difference between a histogram of the distribution of the angles $\tilde{\theta}_{p}$ and equidistribution. We observe some irregularity in the otherwise relatively well equidistributed behaviour happening at~$\frac{\pi}{2}$ corresponding to the pole of $\frac{1}{\cos(t)}$. 
Note that along the prime numbers $p= 1+4b^2$ we have $\tilde{\theta}_{p} \rightarrow \frac{\pi}{2}^-$. The jump in the distribution goes well with the idea that there are infinitely many such prime numbers.
\begin{figure}
	\includegraphics[scale=0.65]{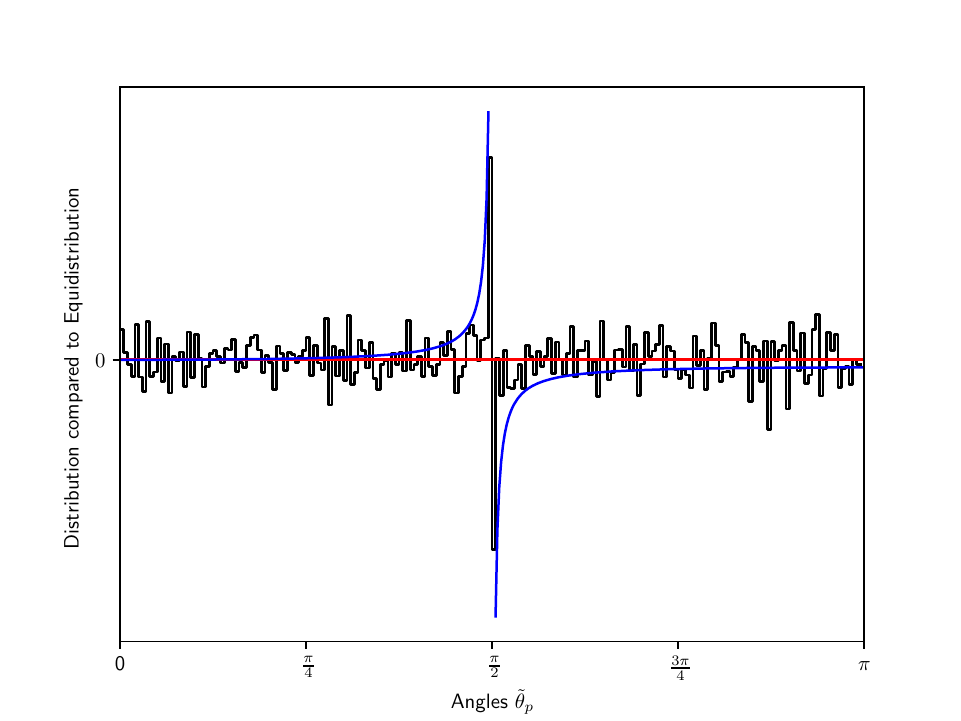}
	\caption{Relative distribution of the angles $\tilde{\theta}_{p}$ for $p\leq 10^8$ : we count the number of angles $\tilde{\theta}_p$ in $200$ subintervals of $[0,\pi]$ and withdraw the mean value; in red equidistribution;  in blue the ``secondary term'' $\frac{1}{\cos x} - \frac12$.}
	\label{fig hist angles Amod4}
\end{figure}

Theorem~\ref{Th race Gaussian primes} and~\ref{Th race Gaussian primes A mod 4} both follow from the decomposition of the function $\phi$ in Fourier series and an application of Theorem~\ref{Th_GeneDistLim} to a sequence of $L$-functions associated to the powers of a given Hecke character, with coefficients the Fourier coefficients of $\phi$.
We give the details on the precise Hecke characters in Section~\ref{Sec Hecke L functions}.
We also comment there on the assumption that allows us to give a formula for the mean value of the limiting distribution.

	\section{$L$-functions of Hecke characters on $\mathbf{Z}[i]$}\label{Sec Hecke L functions}
	
	\subsection{Properties of some Hecke characters and their $L$-function}
	
		Let us first review some properties of Hecke characters that will be useful in this paper. Our references for this section are mostly \cite{Rohrlich}, \cite[Chap.~VII.6]{Neukirch} and \cite[Chap.~3.8, Chap.~5.10]{IK}.
	Let $\eta$ be a unitary primitive Hecke character on $\mathbf{Z}[i]$ of conductor $\mathfrak{f}$ and frequency $\ell$, we define the Hecke $L$-function associated to $\eta$ as the following Dirichlet series or Euler product on the half-plane $\re(s)>1$:
	\begin{equation}
		L(s,\eta) = \sum_{\mathfrak n}\eta(\mathfrak n) N(\mathfrak n)^{-s} = \prod_{\mathfrak p}\left(
		1 - \eta(\mathfrak p)N(\mathfrak{p})^{-s} \right)^{-1}
	\end{equation}
	where $\mathfrak{n}$ (resp. $\mathfrak{p}$) runs over all non-zero (resp. prime) ideals of $\mathbf{Z}[i]$.
	Hecke proved that $L(s,\eta)$ extends meromorphically to the whole plane $\mathbf{C}$ with at most a simple pole at $s=1$ only when $\eta$ is the trivial character.
	Moreover, the completed $L$-function 
	$$\Lambda(s,\eta) := (4 N(\mathfrak{f}))^{s/2}\pi^{-s - (\lvert\ell\rvert + 1)/2}\Gamma\Big( \tfrac{s + \frac{\lvert \ell \rvert}{2}}{2}\Big)\Gamma\Big( \tfrac{s + 1+\frac{\lvert \ell \rvert}{2}}{2}\Big) L(s,\eta),$$ 
	admits a functional equation
	$\Lambda(\eta, s) = W(\eta)\Lambda(\overline{\eta},1-s)$,
	where $W(\eta)$ is a complex number of norm $1$ (the sign of the functional equation) that can be given explicitly via a Gauss sum \cite[(3.85)]{IK}.
	In the case $\eta(\overline{\mathfrak{p}}) = \overline{\eta}(\mathfrak{p})$ for all~$\mathfrak p$, one has $\Lambda(\eta, s) = \Lambda(\overline{\eta}, s)$ and $W(\eta) = \pm 1$.

\subsubsection{A character for angles of Gaussian primes}
	
		In equation~\eqref{Equation def angle}, the definition of the angle of Gaussian primes is given so that the function $\mathfrak{p} \rightarrow e^{i\theta_{\mathfrak{p}}}$ comes from a Hecke character.
	Precisely, let $\mathfrak{m} = (2+2i)$, there are exactly $4$ invertible elements in $\mathbf{Z}[i]/\mathfrak{m}$ corresponding to the $4$ units of $\mathbf{Z}[i]$ : $\pm 1, \pm i$.
	Thus, every ideal $\mathfrak{a} \subset \mathbf{Z}[i]$ co-prime to $\mathfrak{m}$ has a unique generator $\alpha \equiv 1 \bmod \mathfrak{m}$.
	Let $\xi$ be the Hecke character on the multiplicative groups of fractional ideals of $\mathbf{Z}[i]$ modulo $\mathfrak{m}$ defined by
	\begin{equation}\label{def xi}
		\xi((\alpha)) = \begin{cases}
			\frac{\alpha}{\lvert \alpha \rvert} &\text{ if } \alpha \equiv 1 \bmod \mathfrak{m} \\
			0 &\text{ if } (\alpha, \mathfrak{m}) \neq 1.
		\end{cases}
	\end{equation}
	So that for any prime ideal $\mathfrak{p}$ co-prime to $2$, one has $\xi(\mathfrak{p}) = e^{i\theta_{\mathfrak{p}}}$.
	Then $\xi$ is a unitary Hecke character of frequency $1$, and conductor $\mathfrak{m}$, its finite part is the Dirichlet character $\chi: u \mapsto u^{-1}$ for $u \in\lbrace \pm 1, \pm i\rbrace$ representing the four invertible congruence classes modulo $\mathfrak{m}$. 
	Coming back to our decomposition, for any unramified splitting rational prime $(p) = \mathfrak{p}\overline{\mathfrak{p}}$, one has, for any $m \in \mathbf{N}$  
	\begin{align*}
		2\cos(m\theta_{p}) = \xi^{m}(\mathfrak{p}) + \xi^{m}(\overline{\mathfrak{p}}).
	\end{align*}
	Moreover, for any integer $m \geq 1$, the character $\xi^{m}$ has frequency $m$ and finite part $\chi(u) = u^{-m}$ for $u \in\lbrace \pm 1, \pm i\rbrace$ (again seen as elements in  $\mathbf{Z}[i]/\mathfrak{m}$), it is primitive  of conductor respectively $(2+2i)$ if $m$ is odd, $(2)$ if $m \equiv 2 \bmod 4$ and non-primitive of conductor $(1)$ if $m\equiv 0 \bmod 4$. Let us denote $\xi_m$ the primitive character associated to $\xi^m$, we then have
	\begin{align*}
		\xi^m = \begin{cases}
			\xi_m &\text{ if } m \not\equiv 0 \bmod 4, \\
			\xi_m \chi_0 &\text{ if } m \equiv 0 \bmod 4
		\end{cases}
	\end{align*} 
	where $\chi_0$ is the principal character modulo $(1+i)$.
	In particular, 
	\begin{align*}
		\sum_{N\mathfrak{p} \leq x} \xi^m(\mathfrak{p}) = \sum_{N\mathfrak{p} \leq x} \xi_m(\mathfrak{p}) + O(1),
	\end{align*}
	so we will only loose a small error term in considering the primitive character $\xi_m$ instead of $\xi^m$.
	
In order to prove Theorem~\ref{Th race Gaussian primes}, we will apply Theorem~\ref{Th_GeneDistLim} to the family of $L$-functions attached to the $\xi_m$'s. Let us first review the properties of those $L$-functions.	

\begin{lem}\label{Lem L-func Xi}
	The $L$-function associated to $\xi_0$ is 
	$L(s,\xi_{0}) = \zeta_{\mathbf{Q}(i)}(s)$ and
	for $m\geq 1$, the $L$-functions
	\begin{multline*}
		L(s,\xi_{m}) = (1 - \xi_m(1+i)2^{-s})^{-1} \prod_{p \equiv 1 \bmod 4} (1 - 2\cos(m\theta_p) p^{-s} + p^{-2s})^{-1} \\ \times \prod_{p \equiv 3 \bmod 4} (1 - (-1)^m p^{-2s})^{-1} 
	\end{multline*}
	are analytic $L$-functions in the sense of \cite[Def. 1.1]{DevinChebyshev}.
	
	Moreover  each of these $L$-functions has degree $d_m=2$ and analytic conductor given by $$\mathfrak{q}(\xi_m) = \begin{cases}
		4 (\tfrac{m}{2} +3)(\tfrac{m}{2} + 4) & \text{ if } m \equiv 0 \bmod 4,\\
		16 (\tfrac{m}{2} +3)(\tfrac{m}{2} + 4) & \text{ if } m \equiv 2 \bmod 4,\\
		32 (\tfrac{m}{2} +3)(\tfrac{m}{2} + 4) & \text{ if } m \equiv 1 \bmod 2.\\
	\end{cases}$$
\end{lem}

\begin{proof}
See e.g. \cite[\S 5.10]{IK}.
The analytic conductor is given as in \cite[(5.7)]{IK} by the product of the absolute value of the discriminant of the base field multiplied by the norm of the conductor of the character $\xi_m$ 	and by the contribution of the $\Gamma$-factors as given in the end of \cite[\S 5.10]{IK}. 
\end{proof}

We then need to find out the order of vanishing $\ord_{s= \frac12}(L(s,\xi_{m}))$ of these functions at their central point. Our strategy consists in computing the sign of their functional equation which we denote $W(\xi_m)$ and assume that this knowledge is enough, see discussion in Section~\ref{subsection hypothese}.

\begin{lem}\label{Lem Sign Xi}
	The sign of the functional equation for $L(s,\xi_{m})$ depends on the congruence class of $m$ modulo $8$. 
	Precisely, one has
	\begin{align*}
		W(\xi_m) = \begin{cases}
			1 &\text{ if } m \equiv 0, 1, 2, 3, 4, 6 \bmod 8, \\
			-1 &\text{ if } m \equiv 5 , 7 \bmod 8.  
		\end{cases}
	\end{align*}
\end{lem}

\begin{proof}
The formula \cite[(3.85)]{IK} for the sign of the functional equation gives
\begin{align*}
	W(\xi_m) = i^{-m} N(\mathfrak{f}_m)^{-\frac12} \xi_{m,\infty}(\gamma_m) \sum_{x \in \mathbf{Z}[i]/\mathfrak{f}_m}\xi_{m,\mathrm{fin}}(x) e^{2i\pi \tr\big(\tfrac{x}{\gamma_m}\big)}
\end{align*}
where $\mathfrak{f}_m$ is the conductor of $\xi_m$ --- that is $(2+2i)$ is $m$ is odd, $(2)$ if $m \equiv 2 \bmod 4$ and $(1)$ if $m \equiv 0 \bmod 4$ ---, $\xi_{m,\infty}$ and $\xi_{m,\mathrm{fin}}$ are respectively the infinite and finite part of~$\xi_m$, and $\gamma_m \in \mathbf{Z}[i]$ is such that $(\gamma_m) = 2\mathfrak{f}_m$.
In particular for odd $m$,
\begin{align*}
	W(\xi_m) &= i^{-m} 8^{-\frac12} e^{i\pi \frac{m}{4}} \sum_{x \in \lbrace \pm 1, \pm i\rbrace} x^{-m} e^{2i\pi \tr\big(\tfrac{x}{4+4i}\big)} \\
	&= \frac{e^{-i\pi \frac{m}{4}}}{2\sqrt{2}} (2i - 2i^{m+1}) = \begin{cases}
		e^{i\pi \frac{1-m}{4}} &\text{ if } m\equiv 1 \bmod 4, \\
		e^{i\pi \frac{3-m}{4}} &\text{ if } m\equiv 3 \bmod 4.
	\end{cases}
\end{align*}
For $m \equiv 2 \bmod 4$, we have
\begin{align*}
	W(\xi_m) = i^{-m} 4^{-\frac12} \sum_{x \in \lbrace 1,i\rbrace}x^{-m} e^{2i\pi \tr(\tfrac{x}{4})} 
	=1,
\end{align*}
and for $m \equiv 0 \bmod 4$,
\begin{align*}
	W(\xi_m) = i^{-m} 1^{-\frac12} e^{2i\pi \tr(\tfrac{1}{2})} = 1.
\end{align*}
This yields the announced result.
	\end{proof}
	
	Finally, we express the order of vanishing $\ord_{s=1}(L(s,\xi_m^{(2)}))$ of the second moment $L$-functions.
	
	\begin{lem}\label{Lem 2nd at 1 Xi}
		For $m \geq 0$ we have
		$$\ord_{s=1}(L(s,\xi_m^{(2)})) = \begin{cases}
			-2 &\text{ for } m=0, \\
			-1 &\text{ for } m >0, \text{ even,}\\
			1 &\text{ for } m \text{ odd.}
		\end{cases}$$
		
	\end{lem}
	
	\begin{proof}
			In the case $m=0$, we have $L(s,\xi_{0}) = \zeta_{\mathbf{Q}(i)}(s) = \zeta(s) L(s,\chi_4)$ where $\chi_4$ is the non-trivial Dirichlet character modulo $4$. Up to the factor at~$2$, we have $L(s,\chi_4^{(2)}) = \zeta(s)$, and we get that $\ord_{s=1}(L(s,\xi_0^{(2)}))  = -2$. 
			
		For $m\geq 1$, we have that
		\begin{align*}
			L(s,\xi^{m  (2)}) &= \prod_{p \equiv 1 \bmod 4} (1 - e^{i2m\theta_p} p^{-s})^{-1}(1 - e^{-i2m\theta_p} p^{-s})^{-1} \prod_{p \equiv 3 \bmod 4} (1 - (-1)^m p^{-s})^{-2} \\
			&=\begin{cases}
				L(s,\xi^{2m}) \frac{\zeta(s) }{L(s,\chi_4)} (1-2^{-s}) &\text{ for } m \text{ even, } \\
				L(s,\xi^{2m}) \frac{L(s,\chi_4)}{\zeta(s) } (1-2^{-s})^{-1} &\text{ for } m \text{ odd. }
			\end{cases}
		\end{align*} 
		For each $m\geq 1$, $\ord_{s=1}(L(s,\xi^{2m})) = 0$, so, the function $L(s,\xi_m^{(2)})$ has a pole of order $1$ at $s=1$ when $m$ is even and a zero of order $1$ at $s=1$ when $m$ is odd.
	\end{proof}

\subsubsection{A family of characters for the twisted angles $\tilde{\theta}_p$}	

Following the same range of ideas as in the previous section, we now define a family of Hecke characters $(\psi_m)_{m\geq 0}$ that will be used in the proof of Theorem~\ref{Th race Gaussian primes A mod 4} and review the properties of their associated $L$-functions. 	
	To separate the congruence classes of~$a$, we use characters of larger modulus.

For each~$m\geq 0$  let~$\psi_m$ be the Hecke character on the multiplicative groups of fractional ideals of~$\mathbf{Z}[i]$ modulo~$(4)$ defined by
\begin{equation}\label{def psi_m}
	\psi_m((\alpha)) = \begin{cases}
		\big(\frac{\alpha}{\lvert \alpha \rvert}\big)^m &\text{ if } \alpha \equiv 1 \bmod (4) \\
		- \big(\frac{\alpha}{\lvert \alpha \rvert}\big)^m &\text{ if } \alpha \equiv 3+2i \bmod (4) \\
		0 &\text{ if } (\alpha, (4)) \neq 1.
	\end{cases}
\end{equation}
Then, for each $m\geq 0$, $\psi_m$ is a primitive unitary Hecke character of frequency~$m$, and conductor~$(4)$, its finite part is the Dirichlet character 
$$\psi_{m,\mathrm{fin}}: u \mapsto \begin{cases} u^{-m} &\text{ for } u \in\lbrace \pm 1, \pm i\rbrace \\  -\big(\frac{u}{3+2i}\big)^{-m} &\text{ for } u \in \lbrace 3+2i, -3-2i, -2 + 3i, 2 - 3i\rbrace
\end{cases}$$ 
representing the~$8$ invertible congruence classes modulo~$(4)$. 
We have, for any unramified splitting rational prime~$(p) = \mathfrak{p}\overline{\mathfrak{p}}$, and for any~$m \in \mathbf{N}$  
\begin{align}\label{Psi prop theta}
	\psi_{m}(\mathfrak{p}) + \psi_{m}(\overline{\mathfrak{p}}) =  \begin{cases}
		2\cos(m\theta_{p}) &\text{ if } p \equiv 1 \bmod 8 \\
		-2\cos(m\theta_{p}) &\text{ if } p \equiv 5 \bmod 8,
	\end{cases} 
\end{align}
which will be exactly what we need for Theorem~\ref{Th race Gaussian primes A mod 4}.

As in the previous section, the associated $L$-functions have the necessary properties to apply Theorem~\ref{Th_GeneDistLim}.
\begin{lem}\label{Lem L-fucntion Psi}
	For $m\geq 0$, the $L$-functions
$	L(s,\psi_{m}) $ seen as products over rational primes
	are analytic $L$-functions (in the sense of \cite[Def. 1.1]{DevinChebyshev}) of
	 degree~$d_m=2$ and analytic conductor~$\mathfrak{q}(\psi_m) = 
		64  (\tfrac{m}{2} +3)(\tfrac{m}{2} + 4)$.
\end{lem}

We then determine the sign of the functional equation of these $L$-functions.
\begin{lem}\label{Lem sign psi}
		The sign of the functional equation for $L(s,\psi_{m})$ depends on the congruence class of $m$ modulo $4$. 
	Precisely, one has
	\begin{align*}
		W(\psi_m) = \begin{cases}
			1 &\text{ if } m \equiv 0, 1, 2 \bmod 4, \\
			-1 &\text{ if } m \equiv 3 \bmod 4.  
		\end{cases}
	\end{align*}
	\end{lem}

\begin{proof}
Again using the formula \cite[(3.85)]{IK},
we have
\begin{align*}
	W(\psi_m) &= i^{-m} N(4)^{-\frac12} \psi_{m,\infty}(8) \sum_{x \in \mathbf{Z}[i]/(4)}\psi_{m,\mathrm{fin}}(x) e^{2i\pi \tr(\tfrac{x}{8})} \\
	&= \frac{i^{-m}}{4}  \sum_{x \in \lbrace \pm 1,\pm i \rbrace}x^{-m} (e^{2i\pi \tr(\tfrac{x}{8})} - e^{2i\pi \tr(\tfrac{x(3+2i)}{8})}) \\
	&= \frac{1}{2}( (1 - (-1)^m)i^{1-m} + (1+(-1)^m)) \\
	&=\begin{cases}
		1 &\text{ if } m\equiv 0 \bmod2, \text{ or } m \equiv 1 \bmod4, \\
		-1 &\text{ if } m \equiv 3 \bmod 4,
	\end{cases}
\end{align*}
which concludes the proof.
\end{proof}

Finally, we study the second moment $L$-function.
\begin{lem}\label{Lem 2nd at 1 psi}
	For $m \geq 0$ we have
$$\ord_{s=1}(L(s,\psi_m^{(2)})) = \begin{cases}
	-2 &\text{ for } m=0, \\
	-1 &\text{ for } m >0, \text{ even,}\\
	1 &\text{ for } m \text{ odd.}
\end{cases}$$

\end{lem}

\begin{proof}
	Observe that for all $m \geq 0$ one has $\psi_m(\mathfrak{p})^2 =   \xi_m(\mathfrak{p})^2$ for $\mathfrak{p}\mid p \equiv 1 \bmod 4$ and $\psi_m((p)) =   \xi_m((p))$ for $p \equiv 3 \bmod 4$.
		As $L$-functions over the rational primes, we can then deduce that, up to the factor at $2$, the second moments satify
	\begin{align*}
		L(s,\psi_{m}^{(2)}) = L(s,\xi_m^{(2)}). 
	\end{align*}
	We conclude that the orders of vanishing at $s=1$ are the same as in Lemma~\ref{Lem 2nd at 1 Xi}.
\end{proof}

	\subsection{Sign of the functional equation and vanishig at the central point}\label{subsection hypothese}
	
In the statements of Theorems~\ref{Th race Gaussian primes} and~\ref{Th race Gaussian primes A mod 4}, in order to give an explicit expression for the average values of the distributions $\mu_{\phi}$ and $\nu_{\phi}$ we make a bold assumption, namely that ``the vanishing at the central point of an Hecke $L$-function is exactly given by the sign of its functional equation''.
Before proceeding any further, let us discuss this assumption.

First, what is really meant by this assumption is the following expression for the order of vanishing at $\frac12$ for any $L$-function of Hecke character $\eta$ considered : 
$$\ord_{s= \frac12}(L(s,\eta)) = \frac{1 - W(\eta)}{2},$$
where $W(\eta)=\pm1$ is the sign of the functional equation of the $L$-function~$L(\cdot,\eta)$. 
In short, we think that the order of vanishing  is the smallest it can be (observe that $W(\eta) = -1$ in the functional equation of a real $L$-function forces vanishing at $\tfrac12$).

So precisely, by Lemmas~\ref{Lem Sign Xi} and~\ref{Lem sign psi} our assumptions are
\begin{equation}\label{Hyp Xi}
	\ord_{s= \frac12}(L(s,\xi_m)) = \begin{cases}
		0 &\text{ if } m \equiv 0, 1, 2, 3, 4, 6 \bmod 8, \\
		1 &\text{ if } m \equiv 5 , 7 \bmod 8, 
	\end{cases}
\end{equation}
and 
\begin{equation}\label{Hyp Psi}
	\ord_{s= \frac12}(L(s,\psi_m)) = \begin{cases}
		0 &\text{ if } m \equiv 0, 1, 2 \bmod 4, \\
		1 &\text{ if } m \equiv 3 \bmod 4. 
	\end{cases} 
\end{equation}

Such an assumption is in general believed to be true except for density-zero sets of $L$-functions.
One can see for example \cite{Greenberg} and \cite{Waxman,DDW} for partial results in families of $L$-functions associated to Hecke characters close to the ones we are considering in this paper.
However, there exists counter examples to the statement for all $L$-functions, in particular related to elliptic curves of large rank that are also not so far from the setting of this paper\footnote{As suggested by the referee, those could be interesting objects to consider if one wanted to find other potential complete biases.}, see for example \cite{Spe,FT}.

To give more grounding to our assumption, one can find our $L$-functions in the $L$-functions and Modular Forms Database \cite{lmfdb}, at least for $m\leq 24$, where the analytic rank is calculated and satisfies our assumption. They can be found as $L$-functions of modular forms : choosing level equal to $4N(\mathfrak{f}_m)$ (resp. $64$), weight $m+1$, bad $p$ exactly $2$, has CM with discriminant $-4$; one obtains a unique modular form which has the same $L$-function as $\xi_m$ (resp. $\psi_m$).
The assumption was also verified with the help of \texttt{PARI/GP}~\cite{PARI2} for $m \leq 2000$, see Appendix~\ref{App Pari} for details and code.

	\subsection{Proofs of Theorem~\ref{Th race Gaussian primes} and~\ref{Th race Gaussian primes A mod 4}}\label{subsection proof applications}
	
	We will need a technical lemma to compute the mean values.
	    
	\begin{lem}\label{Lem exp sum}
		Let $q,a \in \mathbf{N}$, $q>0$ and $t \notin \frac{2\pi}{q}\mathbf{Z}$
		One has 
		\begin{multline*}
			\sum_{m = 0}^{N}(e^{i(qm +a)t} + e^{i(-qm -a)t}) = \frac{\sin((\tfrac{q}{2}(2N+1)+a)t) - \sin((a-\tfrac{q}{2})t)}{\sin(\tfrac{q}{2}t)}
		\end{multline*}
	\end{lem}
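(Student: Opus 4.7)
The plan is to evaluate the sum as two geometric series and then apply a single product-to-sum identity. First I would split
\begin{align*}
	S_+ &= \sum_{m=0}^{N} e^{i(qm+a)t} = e^{iat}\sum_{m=0}^{N} e^{iqmt}, \\
	S_- &= \sum_{m=0}^{N} e^{-i(qm+a)t} = e^{-iat}\sum_{m=0}^{N} e^{-iqmt},
\end{align*}
and evaluate each by the geometric series formula, using the hypothesis $t \notin \frac{2\pi}{q}\mathbf{Z}$ to ensure $e^{iqt} \neq 1$.

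Next I would symmetrise each closed form by factoring $e^{i(N+1)qt/2}$ out of the numerator and $e^{iqt/2}$ out of the denominator, turning the differences of exponentials into sines. This yields
\[
S_+ = e^{i(a + Nq/2)t}\,\frac{\sin\bigl((N+1)qt/2\bigr)}{\sin(qt/2)}, \qquad
S_- = e^{-i(a + Nq/2)t}\,\frac{\sin\bigl((N+1)qt/2\bigr)}{\sin(qt/2)}.
\]
Adding them and combining the two conjugate exponentials into a cosine gives
\[
S_+ + S_- = \frac{2\cos\!\bigl((a + Nq/2)t\bigr)\sin\!\bigl((N+1)qt/2\bigr)}{\sin(qt/2)}.
\]

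Finally I would apply the identity $2\cos(A)\sin(B) = \sin(A+B) - \sin(A-B)$ with $A = (a+Nq/2)t$ and $B = (N+1)qt/2$. A direct check gives $A+B = \bigl(\tfrac{q}{2}(2N+1) + a\bigr)t$ and $A-B = (a - \tfrac{q}{2})t$, which produces exactly the stated formula. There is no real obstacle here: the argument is a routine manipulation, and the condition $t \notin \frac{2\pi}{q}\mathbf{Z}$ is used precisely to ensure that the denominator $\sin(qt/2)$ is nonzero.
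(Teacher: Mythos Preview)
Your proposal is correct and follows essentially the same route as the paper: sum the two geometric series, symmetrise to obtain $2\cos\bigl((\tfrac{q}{2}N+a)t\bigr)\sin\bigl(\tfrac{q}{2}(N+1)t\bigr)/\sin(\tfrac{q}{2}t)$, and then apply the product-to-sum identity. The only cosmetic difference is that you first simplify $S_+$ and $S_-$ individually before adding, whereas the paper combines them in one step.
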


	\begin{proof}
		For $t \notin \frac{2\pi}{q}\mathbf{Z}$ we can sum the geometric sums
		\begin{align*}
			\sum_{m = 0}^{N}(e^{i(qm +a)t} + e^{i(-qm -a)t}) &= e^{iat} \frac{ 1 - e^{iq(N+1)t}}{1 - e^{iqt}} + e^{-iat} \frac{ 1 - e^{-iq(N+1)t}}{1 - e^{-iqt}} \\
			&= 2\cos((\tfrac{q}{2}N + a)t) \frac{\sin(\tfrac{q}{2}(N+1)t)}{\sin(\tfrac{q}{2}t)} \\
			&= \frac{\sin((\tfrac{q}{2}(2N+1)+a)t) - \sin((a-\tfrac{q}{2})t)}{\sin(\tfrac{q}{2}t)},
		\end{align*}
		this is the statement of the Lemma.
	\end{proof}
	
	We can now finally write the proofs of our applications of Theorem~\ref{Th_GeneDistLim}.

    \begin{proof}[Proof of Theorem~\ref{Th race Gaussian primes} as a consequence of Theorem~\ref{Th_GeneDistLim}]
    	
    	Since $\phi$ is even of period $2\pi$ with converging Fourier series, one can write $\phi$ as a sum of its Fourier series, $\phi(\theta) = \sum_{m\geq 0} c_{m}(\phi) \cos (m\theta)$
    	where the Fourier coefficients are
    	\begin{align*}
    	c_0(\phi) &=  \frac{1}{2\pi}\int_{-\pi}^{\pi} \phi(t) \diff t\\
    	c_m(\phi) &= \frac{1}{\pi}\int_{-\pi}^{\pi} \phi(t)\cos(mt) \diff t \text{, for } m\geq 1.
    	\end{align*}
    	
    		We apply Theorem~\ref{Th_GeneDistLim} with $\mathcal{S} = \mathcal{S}_1 = \lbrace L(s,\xi_{m}) : m\geq 0 \rbrace$, and $\underline{c} = \lbrace c_m(\phi): m\geq 0\rbrace$.
    	Lemma~\ref{Lem L-func Xi} implies that the hypotheses of Theorem~\ref{Th_GeneDistLim} are satisfied for this set and 
    	for $m\geq 1$ we have 
    		$\mathfrak{q}(\xi^m) \ll m^2$. 
   	The hypothesis on the Fourier coefficients of $\phi$ is exactly here to ensure that the series $\sum_{m\geq 1} \lvert c_m(\phi) \rvert \log m^2$ is convergent.

    	 Thus, under the Riemann Hypothesis for $L(s,\xi_m)$, $m\geq 0$, the function
    	\begin{align*}
    	E_{\phi}(x) &=\frac{\log x}{2\sqrt{x}}\Bigg(
    	\sum_{\substack{p\leq x \\ p\equiv 1 \bmod 4}}\sum_{m\geq 0}c_{m}(\phi) 2\cos(m\theta_p) + \sum_{m\geq 0}c_{m}(\phi)\ord_{s=1}(L(s,\xi_m)) \Li(x) \Bigg) \\
    	&=  \frac{\log x}{\sqrt{x}}\Bigg(
    	\sum_{\substack{p\leq x \\ p\equiv 1 \bmod 4}} \phi(\theta_p)  - \frac{c_{0}(\phi) \Li(x)}{2} \Bigg)   	
    	\end{align*}
    	admits a limiting distribution $\mu_{\phi}$
    	(where we used the fact that for all $m\geq 1$, the function $L(\cdot,\xi_m)$ does not have a pole nor a zero at $s=1$).	
    	Moreover, Theorem~\ref{Th_GeneDistLim} yields the following expression for the mean value of $\mu_{\phi}$:
    	\begin{align*}
    	\mathbb{E}(\mu_{\phi}) = \frac{1}{2}\sum_{m\geq 0}c_{m}(\phi)
    	\left(\ord_{s=1}(L(s,\xi_m^{(2)}))  - 2 \ord_{s= \frac12}(L(s,\xi_{m})) \right).
    	\end{align*}
      Lemma~\ref{Lem 2nd at 1 Xi} gives
    	\begin{align*}
    	\sum_{m\geq 0}c_{m}(\phi)\ord_{s=1}(L(s,\xi_{m}^{(2)})) = -c_0(\phi) - \sum_{m\geq 0}(-1)^mc_{m}(\phi) =  -\hat{\phi}(0) - \phi(\pi),
    	\end{align*}  
  while Assumption~\eqref{Hyp Xi} yields
    	\begin{align*}
    	\sum_{m\geq 0}c_{m}(\phi) \ord_{s= \frac12}(L(s,\xi_{m}))  = \sum_{\substack{m \geq 0 \\ m\equiv 5,7 \bmod 8}} c_m(\phi).
    	\end{align*}
    We conclude the proof by writing this sum as an integral. 
    Using Lemma~\ref{Lem exp sum}, we have for smooth $\phi$ supported outside $\frac{\pi}{4}\mathbf{Z}$, 
    \begin{align*}
    	\sum_{\substack{m \geq 0 \\ m \equiv 5, 7 \bmod{8}}}(\hat{\phi}(m) + \hat{\phi}(-m)) 
    	&= \lim_{N\rightarrow\infty} \frac{1}{2\pi} \int_{-\pi}^{\pi} \phi(t)2\frac{\cos(t)}{\sin(4t)}(\sin((8N+10)t) - \sin(2t)) \diff t \\
    	&= -\frac{1}{2\pi} \int_{-\pi}^{\pi} \phi(t)\frac{\cos(t)}{\cos(2t)} \diff t.
    \end{align*}
    Moreover, the sum at $t \in \frac{\pi}{4}\mathbf{Z}$ gives the value $\frac{\phi(0) - \phi(\pi)}{4}$ which concludes the proof.
     \end{proof}

	The proof of Theorem~\ref{Th race Gaussian primes A mod 4} follows essentially the same lines. 
	\begin{proof}[Proof of Theorem~\ref{Th race Gaussian primes A mod 4} as a consequence of Theorem~\ref{Th_GeneDistLim}]
	Recall that by definition of~$\psi_m$ and~\eqref{Psi prop theta}, one has, for $\phi$ even and $2\pi$-periodic, 
	$$\phi(\theta_{p})\mathbf{1}_{p\equiv 1 \bmod 8} - \phi(\theta_{p})\mathbf{1}_{p\equiv 5 \bmod 8} = \sum_{m\geq 0} c_m(\phi) \big( \psi_{m}(\mathfrak{p}) + \psi_{m}(\overline{\mathfrak{p}}) \big),$$
	where the $c_m(\phi)$'s are again $\phi$'s Fourier coefficients.

	We apply Theorem~\ref{Th_GeneDistLim} with $\mathcal{S} = \mathcal{S}_2 = \lbrace L(s,\psi_{m}) : m\geq 0\rbrace$, where by Lemma~\ref{Lem L-fucntion Psi}, each of these $L$-function has degree $d_m =2 $, and conductor $\mathfrak{q}(\psi_m) \ll (m+1)^2$ for $m\geq 0$, finally we take $c_m = c_m(\phi)$ for $m\geq 0$.
	This gives the existence of the limiting logarithmic distribution $\nu_{\phi}$ under the Riemann Hypothesis for the $L(s,\psi_{m}), m\geq 0$.
	Moreover, its mean value is given by
	\begin{align*}
		\mathbb{E}(\nu_{\phi}) = \frac{1}{2}\sum_{m\geq 0}c_{m}(\phi)
		\left(\ord_{s=1}(L(s,\psi_m^{(2)}))  - 2 \ord_{s= \frac12}(L(s,\psi_{m})) \right). 
	\end{align*}
	Lemma~\ref{Lem 2nd at 1 psi} gives
\begin{align*}
	\sum_{m\geq 0}c_{m}(\phi)\ord_{s=1}(L(s,\psi^{m(2)})) = -c_0(\phi) - \sum_{m\geq 0}(-1)^mc_{m}(\phi) =  -\hat{\phi}(0) - \phi(\pi).
\end{align*}
To evaluate the contribution of the zeros at $s=\frac12$ under Assumption~\eqref{Hyp Psi},
we apply Lemma~\ref{Lem exp sum} to obtain
for smooth $\phi$ supported outside $\frac{\pi}{2}\mathbf{Z}$,
\begin{align*}
	\sum_{\substack{m \geq 0 \\ m \equiv 3 \bmod{4}}}(\hat{\phi}(m) + \hat{\phi}(-m)) &= \lim_{N\rightarrow\infty} \frac{1}{2\pi} \int_{-\pi}^{\pi} \phi(t)\frac{\sin((4N+5)t) - \sin(t)}{\sin(2t)} \diff t \\
	&= -\frac{1}{2\pi} \int_{-\pi}^{\pi} \phi(t)\frac{1}{2\cos(t)} \diff t.
\end{align*}
Then considering the sum at $t \in \frac{\pi}{2}\mathbf{Z}$, gives the value $\frac{\phi(0) - \phi(\pi)}{4}$ which concludes the proof of Theorem~\ref{Th race Gaussian primes A mod 4}.
    \end{proof}

	\section{Heuristic for the conjectures}\label{Sec Heuristic}
	
	Let us now develop our heuristic argument for Conjecture~\ref{Conj bias even odd} and~\ref{Conj bias A mod 4}. 
	The main idea is to replace $\phi$ in Theorems~\ref{Th race Gaussian primes} and~\ref{Th race Gaussian primes A mod 4} by a difference of indicator functions.
	Precisely, we take $\phi_1 = \mathbf{1}_{[0,\tfrac{\pi}{4}]\cup[\frac{3\pi}{4},\pi]} - \mathbf{1}_{(\tfrac{\pi}{4},\frac{3\pi}{4})}$ in Theorem~\ref{Th race Gaussian primes} and $\phi_2 = \mathbf{1}_{[0,\tfrac{\pi}{2}]} - \mathbf{1}_{(\tfrac{\pi}{2},\pi]}$ in Theorem~\ref{Th race Gaussian primes A mod 4}, where the functions $\phi_i$ are defined on $[0,\pi]$ and we extend their definition to  $\mathbf{R}$ so that they are even and $2\pi$-periodic. 
	Then, we have 
	$$c_m(\phi_1) = \begin{cases}
		\frac{8}{m\pi} &\text{if } m \equiv 2 \bmod8 \\
		-\frac{8}{m\pi} &\text{if } m \equiv 6 \bmod8, \\
		0 &\text{otherwise}
	\end{cases}
\text{ and } 
c_m(\phi_2) = \begin{cases}
	\frac{4}{m\pi} &\text{if } m \equiv 1 \bmod4 \\
	-\frac{4}{m\pi} &\text{if } m \equiv 3 \bmod4 \\
	0 &\text{otherwise.}
\end{cases}$$
As observed earlier, these Fourier coefficients do not satisfy the hypothesis of decay needed to apply Theorem~\ref{Th race Gaussian primes} or~\ref{Th race Gaussian primes A mod 4}.
Let us however continue the heuristic argument by ignoring this.
	
	Then we would obtain logarithmic limiting distributions $\mu_{\phi_1}$ and $\nu_{\phi_2}$ with mean values equal to 
	\begin{align*}
		\mathbb{E}(\mu_{\phi_1}) &=  - \frac{\phi_1(0) + \phi_1(\pi)}{4} +\frac{1}{2\pi} \int_{-\pi}^{\pi} \phi_1(t)\frac{\cos(t)}{\cos(2t)} \diff t = \frac{-1}{2}\\
\text{and }	\mathbb{E}(\nu_{\phi_2}) &=  -\frac{\phi_2(0) + \phi_2(\pi)}{4} +\frac{1}{2\pi} \int_{-\pi}^{\pi} \phi_2(t)\frac{1}{2\cos(t)} \diff t = \infty.
\end{align*}
The variance of $\mu_{\phi_1}$ is given by 
\[\Var(\mu_{\phi_1}) = 2\sum_{\gamma \in \mathcal{Z}_{\mathcal{S}_1,\underline{c}}}  \frac{\lvert \sum_{m \geq 0}c_m(\phi_1)\ord(\gamma,m)\rvert^{2}}{\frac14 +\gamma^{2}} \in \mathbf{R}\cup\lbrace \infty\rbrace, \]
where $\mathcal{S}_1 = \lbrace L(s,\xi_{m}) : m\geq 0 \rbrace$ is the set of Hecke $L$-functions used in the proof of Theorem~\ref{Th race Gaussian primes}, and $\underline{c}= \lbrace c_m(\phi_1) : m\geq 0\rbrace$. 
We obtain the same formula for $\Var(\nu_{\phi_2})$ with $\phi_1$ replaced by $\phi_2$ and $\mathcal{S}_1$ replaced by $\mathcal{S}_2= \lbrace L(s,\psi_{m}) : m\geq 0\rbrace$, the set of Hecke $L$-functions used in the proof of Theorem~\ref{Th race Gaussian primes A mod 4}. 

Let $\phi = \phi_1$ or $\phi_2$ and $\mathcal{S} = \mathcal{S}_1$ or $\mathcal{S}_2$. Let us assume that there exists $B>0$ such that for all $\gamma\in \mathcal{Z}_{\mathcal{S},\underline{c}}$, we have $\lvert \sum_{m \geq 0}c_m(\phi)\ord(\gamma,m)\rvert < B \max\lbrace \lvert c_m(\phi)\ord(\gamma,m) \rvert : m\geq 0 \rbrace$.
That is, we assume that for each $\gamma >0$ there are not to many $L$-functions in $\mathcal{S}$ that vanish at $\tfrac12 + i \gamma$. This hypothesis is reminiscent of the bounded multiplicity hypothesis used by Fiorilli in \cite{Fiorilli_EC} and is supported by the general idea that zeros of $L$-functions should be independent while being weaker than the Linear Independence hypothesis.
Then we have
\begin{align*}
	\Var(\mu_{\phi_1}) &< B^2 \sum_{m \geq 0}\sum_{\gamma \in \mathcal{Z}_{m}}  \frac{\lvert c_m(\phi)\ord(\gamma,m)\rvert^{2}}{\frac14 +\gamma^{2}} \\
	&= B^2 \sum_{m \geq 0} \lvert c_m(\phi)\rvert^{2}\sum_{\gamma \in \mathcal{Z}_{m}}  \frac{\lvert \ord(\gamma,m)\rvert^{2}}{\frac14 +\gamma^{2}}.
\end{align*}
Recall that $$\sum_{\gamma \in \mathcal{Z}_{m}}  \frac{\lvert \ord(\gamma,m)\rvert^{2}}{\frac14 +\gamma^{2}} \ll (\log \mathfrak{q}_m)^3 \ll (\log m)^3,$$
and that $c_m(\phi) \ll \frac{1}{m}$.
It yields
$\Var(\mu_{\phi_1}) < \infty$ and similarly, $\Var(\nu_{\phi_2}) < \infty$.

This concludes our heuristic for Conjecture~\ref{Conj bias even odd} : we found a limiting logarithmic distribution with negative mean value and bounded variance. This indicates a bias towards negative values in the distribution of the values of the function $D_1$.
Moreover, if we assume that the set $\mathcal{Z}_{\mathcal{S}_1,\underline{c}}$ has many self-sufficient elements, as in Proposition~\ref{Prop inclusive} --- assumption that is again supported by the fact that the zeros of $L$-functions should be independent --- then we deduce the Omega-result with the help of Corollary~\ref{Cor Omega result}.

In the case of Conjecture~\ref{Conj bias A mod 4}, we obtained a limiting logarithmic distribution that has infinite mean value and bounded variance, this indicates a very strong bias in the direction of positive values.
Let us approach this heuristic by another way and
let us write $\phi_{2,N}(\theta) = \sum_{m \leq N} c_m(\phi_2) \cos(m\theta)$.
By Chebyshev's inequality, as in \cite[Lem. 2.10]{Fiorilli_HighlyBiased} (see also \cite[Cor. 5.8]{DevinChebyshev}) 
we have
$$ \nu_{\phi_{2,N}}([0,\infty)) \geq 1 - \frac{\Var(\mu_{\phi_{2,N}})}{\mathbb{E}(\nu_{\phi_{2,N}})} = 1 - O((\log N)^{-1}). $$
This heuristically indicates that, in the limit when $N\rightarrow\infty$ there is a complete bias, namely we expect $ \nu_{\phi_{2}}([0,\infty))  =1$, or in other terms, the function $D_2$ is almost always (in logarithmic scale) positive.

\section{Proof of Theorem~\ref{Th_GeneDistLim}}\label{Section proof theo general}
	
		To prove Theorem~\ref{Th_GeneDistLim}, we follow the proof of \cite[Th. 1.2]{ANS} or \cite[Th. 2.1]{DevinChebyshev}, but keeping explicit the dependency on the $L$-function.
We show that Theorem~\ref{Th_GeneDistLim} is the consequence of the following result.

	\begin{prop}\label{Prop_LimOfDist}
		Under the hypotheses of Theorem~\ref{Th_GeneDistLim},
		for each $M >0$ and $T>2$, let
		\[G_{\mathcal{S},\underline{c},M,T}(x) = m_{\mathcal{S},\underline{c}} -\sum_{\gamma\in\mathcal{Z}_{\mathcal{S},\underline{c},M}(T)}2\re\left(\ord_{\mathcal{S},\underline{c}}(\gamma)\frac{x^{i\gamma}}{\frac12 + i\gamma}\right).\]
			The function $G_{\mathcal{S},\underline{c},M,T}$ admits a limiting logarithmic distribution $\mu_{\mathcal{S},\underline{c},M,T}$.
		Moreover, there exists a function $M(T)$ satisfying $M(T)\rightarrow\infty$ as $T\rightarrow\infty$ such as for any bounded Lipschitz continuous function $g$, one has
		\begin{align*}
		\lim_{T\rightarrow\infty}\int_{\mathbf{R}}g(t)\diff\mu_{\mathcal{S},\underline{c},M(T),T}(t) = 
		\int_{\mathbf{R}}g(t)\diff\mu_{\mathcal{S},\underline{c}}(t).
		\end{align*}
	\end{prop}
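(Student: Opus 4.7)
The plan is to establish existence of each truncated measure $\mu_{\mathcal{S},\underline{c},M,T}$ via Kronecker--Weyl equidistribution, then to use an $L^2$-Cauchy estimate to extract a diagonal sequence $M(T)\to\infty$ such that $(\mu_{\mathcal{S},\underline{c},M(T),T})_T$ converges weakly to a probability measure we shall name $\mu_{\mathcal{S},\underline{c}}$. The strategy is designed to simultaneously control two truncations: finitely many $L$-functions (parameter $M$) and bounded height of zeros (parameter $T$), the former being the new difficulty compared with previous Chebyshev-bias arguments.

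For the first assertion, fix $M,T$; then $\mathcal{Z}_{\mathcal{S},\underline{c},M}(T)=\{\gamma_1,\ldots,\gamma_N\}$ is finite, and, writing $y=\log x$, $G_{\mathcal{S},\underline{c},M,T}(e^y)$ is a real trigonometric polynomial in $y$ of frequencies $\gamma_1,\ldots,\gamma_N$. By the Kronecker--Weyl theorem, the tuple $(y\gamma_j)_{j\leq N}\bmod 2\pi$ equidistributes on the closure of its orbit in $(\mathbf{R}/2\pi\mathbf{Z})^N$ with respect to Haar measure as $y$ varies in $[0,Y]$, $Y\to\infty$. Pushing this Haar measure forward by the continuous evaluation map gives the limiting logarithmic distribution $\mu_{\mathcal{S},\underline{c},M,T}$, whose characteristic function
\[\hat{\mu}_{\mathcal{S},\underline{c},M,T}(\xi) = \lim_{Y\to\infty}\frac{1}{Y}\int_0^Y e^{i\xi G_{\mathcal{S},\underline{c},M,T}(e^y)}\diff y\]
thus exists.

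For the convergence assertion, I plan to verify pointwise convergence of characteristic functions via an $L^2$-Cauchy estimate and then apply L\'evy's continuity theorem. Starting from $|e^{it}-e^{is}|\leq|t-s|$ and Cauchy--Schwarz,
\[\bigl|\hat{\mu}_{\mathcal{S},\underline{c},M,T}(\xi)-\hat{\mu}_{\mathcal{S},\underline{c},M',T'}(\xi)\bigr|\leq |\xi|\cdot\Bigl(\limsup_{Y\to\infty}\tfrac{1}{Y}\int_0^Y\bigl|G_{\mathcal{S},\underline{c},M,T}(e^y)-G_{\mathcal{S},\underline{c},M',T'}(e^y)\bigr|^2\diff y\Bigr)^{1/2},\]
and the orthogonality $\lim_{Y\to\infty}\tfrac{1}{Y}\int_0^Y e^{i(\gamma-\gamma')y}\diff y=\delta_{\gamma,\gamma'}$ (for $\gamma,\gamma'>0$) collapses the inner quantity to
\[2\sum_{\gamma>0}\frac{\bigl|\ord_{\mathcal{S},\underline{c},M}(\gamma)\mathbf{1}_{\gamma\leq T}-\ord_{\mathcal{S},\underline{c},M'}(\gamma)\mathbf{1}_{\gamma\leq T'}\bigr|^2}{\tfrac14+\gamma^2}.\]
I would then split this into a tail-in-$\gamma$ piece, bounded by $\sum_{\gamma>\min(T,T')}|\ord_{\mathcal{S},\underline{c}}(\gamma)|^2/(\tfrac14+\gamma^2)$, and a tail-in-$m$ piece, bounded by the analogous sum over $\gamma\leq T$ with $\ord_{\mathcal{S},\underline{c}}-\ord_{\mathcal{S},\underline{c},M}$ replacing $\ord_{\mathcal{S},\underline{c},M}$.

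The key estimates are that both tails vanish in the appropriate regime. Combining $\ord(\gamma,m)\ll\log(\mathfrak{q}(f_m)(|\gamma|+3)^{d_m})$ with the Weyl-type bound $O(d_m(T\log T+\log\mathfrak{q}(f_m)))$ on the number of zeros of $L(f_m,\cdot)$ up to height $T$, and with the hypothesis $\sum_m|c_m|d_m\log\mathfrak{q}(f_m)<\infty$, one checks that the full series $\sum_{\gamma>0}|\ord_{\mathcal{S},\underline{c}}(\gamma)|^2/(\tfrac14+\gamma^2)$ converges, so the tail-in-$\gamma$ vanishes as $\min(T,T')\to\infty$; the pointwise estimate $|\ord_{\mathcal{S},\underline{c}}(\gamma)-\ord_{\mathcal{S},\underline{c},M}(\gamma)|\ll\sum_{m>M}|c_m|\log(\mathfrak{q}(f_m)(|\gamma|+3)^{d_m})$ then yields, by dominated convergence, vanishing of the tail-in-$m$ for each fixed $T$ as $M\to\infty$. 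A standard diagonal extraction produces $M(T)\to\infty$ along which both errors tend to $0$, so the characteristic functions form a Cauchy sequence converging pointwise; the $L^2$-Cauchy bound also gives $|\hat{\mu}_{\mathcal{S},\underline{c},M(T),T}(\xi)-1|\ll|\xi|$ uniformly, making the limit continuous at $\xi=0$. L\'evy's theorem then identifies the limit as the Fourier transform of a probability measure $\mu_{\mathcal{S},\underline{c}}$, and weak convergence against bounded Lipschitz $g$ follows. The hard part is exactly the uniformity over $\gamma\leq T$ of the tail-in-$m$ estimate as $T$ grows, which is where the convergence hypothesis on $\sum_m|c_m|d_m\log\mathfrak{q}(f_m)$ is essential and which distinguishes the present infinite-family setting from the finite-family treatments in \cite{RS,ANS,DevinChebyshev}.
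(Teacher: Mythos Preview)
Your approach is genuinely different from the paper's. The paper does not prove the Cauchy property of the measures $\mu_{\mathcal{S},\underline{c},M,T}$ directly; instead it first establishes, via the explicit formula (Proposition~\ref{Prop sum L functions}), that $E_{\mathcal{S},\underline{c}}(e^y)$ is approximated in $B^2$-mean by the trigonometric polynomials $G_{\mathcal{S},\underline{c},M(T),T}(e^y)$, and then invokes \cite[Th.~2.9]{ANS} on $B^2$-almost periodic functions. The choice of $M(T)$ there is explicit: it is the $M$ for which $\sum_{m>M}|c_m|\log\mathfrak{q}(f_m)\le (T(\log T)^2)^{-1}$. A key payoff of that route is that it simultaneously identifies $\mu_{\mathcal{S},\underline{c}}$ as the limiting logarithmic distribution of $E_{\mathcal{S},\underline{c}}$, which is exactly what Theorem~\ref{Th_GeneDistLim} needs; your L\'evy-continuity argument only produces an abstract weak limit of the truncated measures, so you would still need Proposition~\ref{Prop sum L functions} afterwards to connect it to $E_{\mathcal{S},\underline{c}}$.

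There is also a slip in your computation that affects the tail-in-$m$ step. In the definition of $G_{\mathcal{S},\underline{c},M,T}$ the Fourier coefficient at frequency $\gamma$ is $\ord_{\mathcal{S},\underline{c}}(\gamma)$ (the full series over all $m$), and the parameter $M$ enters only through the index set $\mathcal{Z}_{\mathcal{S},\underline{c},M}(T)$. Hence the Parseval expression should read
\[
2\sum_{\gamma>0}\frac{\bigl|\ord_{\mathcal{S},\underline{c}}(\gamma)\bigr|^2\,\bigl|\mathbf{1}_{\gamma\in\mathcal{Z}_{\mathcal{S},\underline{c},M}(T)}-\mathbf{1}_{\gamma\in\mathcal{Z}_{\mathcal{S},\underline{c},M'}(T')}\bigr|}{\tfrac14+\gamma^2},
\]
not the version with $\ord_{\mathcal{S},\underline{c},M}$ and $\ord_{\mathcal{S},\underline{c},M'}$. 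Your ``tail-in-$m$'' bound via $|\ord_{\mathcal{S},\underline{c}}-\ord_{\mathcal{S},\underline{c},M}|$ therefore does not apply as written: what you must control instead is the contribution of those $\gamma\le\min(T,T')$ lying in the symmetric difference $\mathcal{Z}_{\mathcal{S},\underline{c},M}\triangle\mathcal{Z}_{\mathcal{S},\underline{c},M'}$, each weighted by the \emph{full} $|\ord_{\mathcal{S},\underline{c}}(\gamma)|^2$. Such $\gamma$ are zeros of some $L(f_m,\cdot)$ with $m>\min(M,M')$, so the strategy is salvageable (bound $|\ord_{\mathcal{S},\underline{c}}(\gamma)|\ll_{\mathcal{S},\underline{c}}\log(|\gamma|+3)$ and sum over $\mathcal{Z}_{f_m}$ for $m$ large), but the details differ from what you wrote and the dominated-convergence justification needs to be redone accordingly.
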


	For every $M,T$ fixed, the set $\mathcal{Z}_{\mathcal{S},\underline{c},M}(T)$ is finite.
	Thus, by hypothesis, the function $G_{\mathcal{S},\underline{c},M,T}$ is well-defined and it admits a limiting logarithmic distribution as a consequence of Kronecker--Weyl equidistribution Theorem (see e.g. \cite[Th. 4.2]{DevinChebyshev}, \cite[Lem. 4.3]{Humphries}, or \cite[Lem. B.3]{MartinNg}).
	The convergence of the measures needs more work on the estimation of the error terms. 
	Let us first recall the following precise form of \cite[Prop. 4.2]{ANS}, \cite[(4.5)]{DevinChebyshev}. 
	
	\begin{prop}\label{Prop for one L function}
		Let $L(f,s)$ be an analytic $L$-function of degree $d$, we denote by~$L(f^{(2)},s)$ its second moment $L$-function.  
		Assume the Riemann Hypothesis holds for $L(f,s)$ and $L(f^{(2)},s)$ . 
		Let $T >0$, and 
		\begin{align*}
		G_{f,T}(x) = m_{f} -\sum_{\gamma\in\mathcal{Z}_{f}(T)}2\re\left(\ord(\gamma, L(f,s))\frac{x^{i\gamma}}{\frac12 + i\gamma}\right).
		\end{align*}
		We have the following estimate for all $x>0$ :
		\begin{align*}
		E_f(x) :=&\frac{\log x}{\sqrt{x}}\left(\sum_{p\leq x} \lambda_{f}(p) + \ord_{s=1}(L(f,s))\Li(x)\right) \\
		&= G_{f,T}(x) 
		- \epsilon_{f}(x,T)
		+ O\left(d\frac{\log\mathfrak{q}(f)}{\log x} \right)
		\end{align*}
		where the function $\epsilon_{f}(x,T)$ satisfies
		\begin{equation}\label{Bound_epsilon}
		\int_{2}^{Y}\lvert \epsilon_{f}(e^{y},T)\rvert^{2} \diff y \ll Y\frac{\left(\log(\mathfrak{q}(f)T^d)\right)^2}{T} + \frac{\left(\log(\mathfrak{q}(f)T^d)\right)^2\log T}{T}
		\end{equation}
		with an absolute implicit constant.
	\end{prop}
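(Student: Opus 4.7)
The plan is to follow the classical route from prime sums to zeros via Perron's formula and the explicit formula for $L(f,s)$, while tracking every error with explicit dependence on $d$ and $\mathfrak{q}(f)$. Writing $-L'/L(f,s) = \sum_{n\geq 1}\Lambda_f(n) n^{-s}$ where $\Lambda_f(p^k) = (\log p)\sum_{j=1}^d \alpha_{f,j}(p)^k$ for the Satake parameters $\alpha_{f,j}(p)$, one has in particular $\Lambda_f(p) = \lambda_f(p)\log p$. I would start by deriving a truncated explicit formula for $\psi_f(x) := \sum_{n\leq x}\Lambda_f(n)$ by shifting the contour of $\frac{1}{2\pi i}\int \bigl(-\frac{L'}{L}(f,s)\bigr)\frac{x^s}{s}\diff s$ across the critical strip, cut at height $T$. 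GRH for $L(f,s)$ together with the zero-density estimate $\#\{\gamma : |\gamma - t|\leq 1\}\ll \log(\mathfrak{q}(f)(|t|+3)^d)$ yields
\[
\psi_f(x) = r_f\, x - \sum_{|\gamma|\leq T}\frac{x^{1/2+i\gamma}}{\tfrac12+i\gamma} + R_f(x,T),
\]
with $r_f = -\ord_{s=1}L(f,s)$ and $R_f(x,T) \ll \frac{x(\log(\mathfrak{q}(f)T^d))^2}{T}$ uniformly on any interval away from prime powers.

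Next I would isolate the contribution of primes by stripping prime-power terms. The $k=2$ piece $\sum_{p\leq\sqrt x}(\log p)\sum_j\alpha_{f,j}(p)^2$ is precisely (up to bounded corrections at ramified primes) the Dirichlet-coefficient sum of $-L(f^{(2)},\cdot)'/L(f^{(2)},\cdot)$, so the same explicit formula applied to $L(f^{(2)},s)$, under RH for that $L$-function, turns it into $-\ord_{s=1}L(f^{(2)},s)\sqrt{x}$ plus an oscillating piece of size $O(x^{1/4}\log x)$ after normalisation. The contributions from $k\geq 3$ are absorbed in $O(x^{1/3}\log x)$ by trivial bounds on $|\alpha_{f,j}(p)|$. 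Partial summation then transitions from $\sum_{p\leq x}\lambda_f(p)\log p$ to $\sum_{p\leq x}\lambda_f(p)$, converting the main term $r_f\, x$ into $-\ord_{s=1}L(f,s)\,\Li(x)$; separating off the possible zero at $s=\tfrac12$ from the oscillating sum over $\gamma \neq 0$ produces the constant $-2\ord_{s=1/2}L(f,s)$. Combining these assembles exactly $m_f$, while the sum over $0<\gamma\leq T$ with its real-part symmetry gives the finite oscillating part of $G_{f,T}(x)$. The $O(d\log\mathfrak{q}(f)/\log x)$ term collects the size of the prime-power tails and the normalisation factor.

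Finally, I would bound $\epsilon_f(e^y,T)$ in mean square over $y\in[2,Y]$. The tail contribution has the shape $\sum_{|\gamma|>T} a_\gamma e^{iy\gamma}$ with $a_\gamma = \ord(\tfrac12+i\gamma, f)/(\tfrac12+i\gamma)$, and a standard almost-periodic Parseval/Montgomery--Vaughan inequality gives
\[
\int_2^Y\Bigl|\sum_{|\gamma|>T} a_\gamma e^{iy\gamma}\Bigr|^2 \diff y \ll Y\sum_{|\gamma|>T}|a_\gamma|^2 + \sum_{|\gamma|>T}\frac{|a_\gamma|^2}{|\gamma|}.
\]
Inserting the bound $\ord(\tfrac12+i\gamma,f)\ll\log(\mathfrak{q}(f)(|\gamma|+3)^d)$ and summing in dyadic ranges via the horizontal density of zeros yields the leading $Y(\log(\mathfrak{q}(f)T^d))^2/T$ contribution. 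The secondary $(\log(\mathfrak{q}(f)T^d))^2\log T/T$ term in \eqref{Bound_epsilon} is produced by integrating the Perron truncation error $R_f(e^y,T)$ over $y\in[2,Y]$ and is the only place where an extra $\log T$ appears.

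The main obstacle is the insistence on complete uniformity in $f$: one cannot simply cite the textbook explicit formula but must redo the contour shift carefully, tracking how the degree $d$ enters through the Gamma-factor growth, the horizontal density of zeros, and the prime-power remainders handled by $L(f^{(2)},s)$. Once the uniform explicit formula is in place with the announced dependence on $d\log\mathfrak{q}(f)$, the mean value identification and the $L^2$ tail estimate are routine, and the proposition follows.
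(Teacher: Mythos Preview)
Your plan is essentially the same as the paper's: explicit formula with uniform dependence on $d$ and $\mathfrak{q}(f)$, removal of prime powers using RH for $L(f^{(2)},s)$, partial summation (the paper phrases it via a Stieltjes integral and integration by parts), then an $L^2$ bound on the remainder. The paper outsources most details to \cite{DevinChebyshev}, and your sketch recovers those details.

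One step, however, would not go through as written. You derive $R_f(x,T)\ll x(\log(\mathfrak{q}(f)T^d))^2/T$ and then claim that the secondary term $(\log(\mathfrak{q}(f)T^d))^2\log T/T$ in \eqref{Bound_epsilon} comes from ``integrating the Perron truncation error $R_f(e^y,T)$ over $y\in[2,Y]$''. After normalising by $e^{y/2}$ this contribution to $\epsilon_f$ is of pointwise size $e^{y/2}(\log(\mathfrak{q}(f)T^d))^2/T$, whose $L^2$-norm over $[2,Y]$ is exponential in $Y$, not $O_Y(1)$. The correct way (as in \cite{ANS} and \cite{DevinChebyshev}) is to introduce a second height $X\geq T$, use the explicit formula up to height $X$ so that the pointwise remainder is $\ll e^{y/2}(\log(\mathfrak{q}(f)X^d))^2/X$, write
\[
\epsilon_f(e^y,T)=\sum_{T<|\gamma|\leq X}\frac{e^{iy\gamma}}{\tfrac12+i\gamma}\,\ord(\gamma)+O\!\left(\frac{e^{y/2}(\log(\mathfrak{q}(f)X^d))^2}{X}\right),
\]
apply your Parseval/Montgomery--Vaughan bound to the finite tail, and finally choose $X=X(Y)$ large (e.g.\ $X=e^{Y}$). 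Both terms in \eqref{Bound_epsilon} then emerge from the zero sum; there is no separate ``Perron error'' contribution surviving in $L^2$. Also note that the off-diagonal term in the mean-value inequality is $\sum |a_\gamma|^2\,\delta_\gamma^{-1}$ with $\delta_\gamma$ the nearest-neighbour spacing (controlled via the horizontal zero density), not $\sum|a_\gamma|^2/|\gamma|$ as you wrote.
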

	
	\begin{proof}
		The proof is contained in \cite{DevinChebyshev}, where the dependency in the $L$-function is not always written explicitly.
		In particular from \cite[Prop. 4.4]{DevinChebyshev} we have 
		\begin{align*}
		\psi(f,x) + \ord_{s=1}(L(f,s))x  = 
		- \sum_{\substack {L(f,\rho)=0 \\ \lvert\im(\rho)\rvert\leq T}}\frac{x^{\rho}}{\rho} - x^{\frac12}\epsilon_{f}(x,T) + 
		O\left(\log(\mathfrak{q}(f)x^d)\log x \right)
		\end{align*}
		with an absolute implicit constant. 
		Then taking care of the sum over squares of primes, we
		use the Ramanujan--Petersson Conjecture and the Prime Number Theorem to obtain:
		\[\theta(f,x) :=\sum_{p\leq x}\lambda_{f}(p)\log p = \psi(f,x) 
		- \sum_{p^{2}\leq x}\left(\sum_{j=1}^{d} \alpha_{j}(p)^2\right) \log p
		+ O(dx^{\frac13}).\]
		To evaluate the second term, we use the Riemann Hypothesis for the function $L(f^{(2)},s) = L(\Sym^2f,s) L(\wedge^2f,s)^{-1}$.
		One has
		$\frac{L'(f^{(2)},s)}{L(f^{(2)},s)} = \frac{L'(\Sym^2f,s)}{L(\Sym^2f,s)}  - \frac{L'(\wedge^2f,s)}{L(\wedge^2f,s)}$, thus 
		\begin{align*}
		\sum_{p^{2}\leq x}\left(\sum_{j=1}^{d} \alpha_{j}(p)^2\right) \log p 
		&= -\ord_{s=1}(L(\Sym^2f,s))x^{\frac{1}{2}}  + O(x^{\frac{1}{4}}\log x \log (x^{d(d+1)/2}\mathfrak{q}(\Sym^2f))) \\
		&\quad + \ord_{s=1}(L(\wedge^2f,s))x^{\frac{1}{2}}  + O(x^{\frac{1}{4}}\log x \log (x^{d(d-1)/2}\mathfrak{q}(\wedge^2f))) + O(dx^{\frac14}) \\
		&= -\ord_{s=1}(L(f^{(2)},s))x^{\frac{1}{2}} + O(dx^{\frac{1}{4}}\log x \log(x^{d}\mathfrak{q}(f)) ).	
		\end{align*}
	Finally, using Stieltjes integral, we write $E_{f}(x) = \frac{\log x}{x^{\frac12}} \int_{2}^{x} \frac{\diff (\theta(f,t) + \ord_{s=1}(L(f,s))t)}{\log t}$.
	After integration by parts this yields
	\begin{multline*}
	E_{f}(x) =  \frac{1}{\sqrt{x}}\big(\psi(f,x) + x\ord_{s=1}(L(f,s))\big) + \ord_{s=1}(L(f^{(2)},s)) \\ +
	O\left( \frac{\log x}{\sqrt{x}}  \int_{2}^{x} \frac{\psi(f,t) + t\ord_{s=1}(L(f,s)) + \sqrt{t}\ord_{s=1}(L(f^{(2)},s))}{t(\log t)^{2}}\diff t \right) \\
	+ O(dx^{-\frac16}\log x  \log(\mathfrak{q}(f)x^d)). 
	\end{multline*}
	Using the explicit formula
	\begin{multline*}
	\psi(f,x) + \ord_{s=1}(L(f,s))x  = -\sum_{\substack {L(f,\rho)=0 \\ \lvert\im(\rho)\rvert\leq X}}\frac{x^{\rho}}{\rho}\\   + 
	O\left(d\log x + \frac{x}{X}\left(d(\log x)^2 + \log(\mathfrak{q}(f)X^{d})\right) + \log(\mathfrak{q}(f)X^{d})\log X\right), 
	\end{multline*}
	and another integration by parts to evaluate the second term we have
	\begin{multline*}
	 \int_{2}^{x} \frac{\psi(f,t) + t\ord_{s=1}(L(f,s)) + \sqrt{t}\ord_{s=1}(L(f^{(2)},s))}{t(\log t)^{2}}\diff t \\
	 \ll \ord_{s=1}(L(f^{(2)},s))\frac{x^{\frac12}}{(\log x)^2}  +  \sum_{\substack {L(f,\rho)=0 \\ \lvert\im(\rho)\rvert\leq x}}\frac{\lvert x^{\rho}\rvert }{\lvert\rho^2\rvert (\log x)^2} + \log(\mathfrak{q}(f)x^{d})\log x
	\end{multline*}
	where after the integration we take $X=x$.
	The sum over the zeros is convergent and this concludes the proof of Proposition~\ref{Prop for one L function}.
	\end{proof}
	
Then we sum over the $L$-functions, we obtain the following result.

\begin{prop}\label{Prop sum L functions}
	Under the hypotheses of Theorem~\ref{Th_GeneDistLim},
	there exists a function $M(T) = M_{\mathcal{S},\underline{c}}(T)$, with $M(T) \rightarrow\infty$ as $T \rightarrow\infty$, 
	such that
	we have the following estimate for all $x>0$ and $T>0$.
	\begin{equation*}
	E_{\mathcal{S},\underline{c}}(x)= G_{\mathcal{S},\underline{c},M(T),T}(x) - \epsilon_{\mathcal{S},\underline{c}}(x,T) + O_{\mathcal{S},\underline{c}}\left( \frac{1}{\log x} + \frac{1}{T}\right),
	\end{equation*}
	where the function $\epsilon_{\mathcal{S},\underline{c}}(x,T)$ satisfies
	\begin{equation}\label{Bound_epsilon total}
	\int_{2}^{Y}\lvert \epsilon_{\mathcal{S},\underline{c}}(e^{y},T)\rvert^2 \diff y \ll_{\mathcal{S},\underline{c}} Y\frac{(\log T)^2}{T} + \frac{(\log T)^{3}}{T}.
	\end{equation}
\end{prop}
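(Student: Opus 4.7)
The plan is to apply Proposition~\ref{Prop for one L function} to each $L(f_m,\cdot)$ in the family, multiply by $c_m$, and sum, choosing a truncation $M=M(T)$ adapted to~$T$. For each $m\geq 0$ Proposition~\ref{Prop for one L function} gives
\[
E_{f_m}(x) = G_{f_m,T}(x) - \epsilon_{f_m}(x,T) + O\!\left(d_m\tfrac{\log \mathfrak{q}(f_m)}{\log x}\right).
\]
The Ramanujan--Petersson bound $\lvert\lambda_{f_m}(p)\rvert\leq d_m$ (part of the analytic $L$-function hypothesis) together with the absolute convergence of $\sum_{m}\lvert c_m\rvert d_m\log\mathfrak{q}(f_m)$ justifies writing $\sum_{m\geq 0}c_mE_{f_m}(x) = E_{\mathcal{S},\underline{c}}(x)$ as an absolutely convergent series. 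Summing the above decomposition weighted by $c_m$ over all $m\geq 0$ then yields
\[
E_{\mathcal{S},\underline{c}}(x) = \sum_{m\geq 0}c_mG_{f_m,T}(x) - \sum_{m\geq 0}c_m\epsilon_{f_m}(x,T) + O\!\left(\tfrac{1}{\log x}\right),
\]
where the big-$O$ is uniform in~$x$ thanks to the convergence hypothesis.

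The next step is to identify the first sum on the right with $G_{\mathcal{S},\underline{c},M(T),T}(x)$ up to a manageable remainder. Interchanging the summation over~$m$ with the (finite for each $m$) summation over positive zero ordinates~$\gamma\leq T$, and using $\sum_m c_m\ord(\gamma,m) = \ord_{\mathcal{S},\underline{c}}(\gamma)$, one obtains
\[
\sum_{m\geq 0}c_mG_{f_m,T}(x) = m_{\mathcal{S},\underline{c}} - \sum_{\gamma\in\mathcal{Z}_{\mathcal{S},\underline{c}}(T)}2\re\!\left(\ord_{\mathcal{S},\underline{c}}(\gamma)\frac{x^{i\gamma}}{\tfrac12+i\gamma}\right).
\]
The discrepancy with $G_{\mathcal{S},\underline{c},M,T}(x)$ is supported on ordinates $\gamma\leq T$ where $\ord_{\mathcal{S},\underline{c}}(\gamma)\neq 0$ but the truncated order $\ord_{\mathcal{S},\underline{c},M}(\gamma)=\sum_{m\leq M}c_m\ord(\gamma,m)$ vanishes; for such~$\gamma$ one has $\lvert\ord_{\mathcal{S},\underline{c}}(\gamma)\rvert \leq \sum_{m>M}\lvert c_m\rvert\ord(\gamma,m) \ll \sum_{m>M}\lvert c_m\rvert(\log\mathfrak{q}(f_m)+d_m\log T)$. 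Summing this against $1/\lvert\tfrac12+i\gamma\rvert$ over all zeros up to height~$T$ produces at most a factor $(\log T)^{O(1)}$, so choosing $M(T)\to\infty$ slowly enough---feasible by the convergence hypothesis---renders the whole discrepancy $O(1/T)$.

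The remaining contribution to $\epsilon_{\mathcal{S},\underline{c}}(x,T)$ is essentially $\sum_{m\leq M(T)}c_m\epsilon_{f_m}(x,T)$, plus tail pieces controlled as above. Minkowski's inequality in $L^2([0,Y])$ combined with bound~\eqref{Bound_epsilon} of Proposition~\ref{Prop for one L function} gives
\[
\Big\lVert \sum_{m\leq M(T)}c_m\epsilon_{f_m}(\cdot,T)\Big\rVert_{2} \ll \big(\sqrt{Y/T}+\sqrt{\log T/T}\big)\sum_{m\leq M(T)}\lvert c_m\rvert\log(\mathfrak{q}(f_m)T^{d_m}).
\]
Writing $\log(\mathfrak{q}(f_m)T^{d_m})\leq \log\mathfrak{q}(f_m)+d_m\log T$ and invoking the convergence of $\sum_m\lvert c_m\rvert d_m\log\mathfrak{q}(f_m)$ bounds the final sum by $O(\log T)$; squaring yields the claimed estimate $\int_2^Y\lvert\epsilon_{\mathcal{S},\underline{c}}(e^y,T)\rvert^2\diff y \ll Y(\log T)^2/T + (\log T)^3/T$.

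The main obstacle is the bookkeeping needed to choose $M(T)$ and to justify interchanging the sum over~$m$ with the sum over zeros. Pointwise bounds on the tail $\sum_{m>M}c_m E_{f_m}(x)$ are insufficient because $\lvert E_{f_m}(x)\rvert$ grows like $\log x$ under GRH via the explicit formula; hence the analysis must remain in $L^2$ for the $\epsilon$-piece and rely essentially on the hypothesis $\sum_m\lvert c_m\rvert d_m\log\mathfrak{q}(f_m)<\infty$ to ensure that both the zero-set truncation error and the Minkowski-based summation contribute only the targeted powers of~$\log T$.
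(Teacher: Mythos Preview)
Your proposal is correct and follows essentially the same approach as the paper: sum Proposition~\ref{Prop for one L function} over $m$, control the discrepancy between $\sum_m c_m G_{f_m,T}$ and $G_{\mathcal{S},\underline{c},M,T}$ via the tail $\sum_{m>M}|c_m|\log\mathfrak{q}(f_m)$ and a suitable choice of $M(T)$, and bound $\sum_m c_m\epsilon_{f_m}$ in $L^2$ via the triangle inequality. Two small cosmetic differences: the paper identifies the zero-set discrepancy directly as $\sum_{m>M}c_m\sum_{\gamma\in\mathcal{Z}_{f_m}(T)\setminus\mathcal{Z}_{\mathcal{S},\underline{c},M}(T)}(\cdots)$ rather than first passing through $\mathcal{Z}_{\mathcal{S},\underline{c}}(T)$ (the $m\leq M$ piece vanishes since $\sum_{m\leq M}c_m\ord(\gamma,m)=\ord_{\mathcal{S},\underline{c},M}(\gamma)=0$ on that set), and the paper simply sets $\epsilon_{\mathcal{S},\underline{c}}(x,T)=\sum_{m\geq 0}c_m\epsilon_{f_m}(x,T)$ over \emph{all} $m$ and bounds it at once, so there is no need to split off ``tail pieces'' of the $\epsilon$-sum as you do---your truncation at $M(T)$ there is harmless but unnecessary.
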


\begin{proof}[Proof of Proposition~\ref{Prop sum L functions}]
	By definition, one has
	\begin{equation*}
	E_{\mathcal{S},\underline{c}}(x) = \sum_{m\geq 0} c_{m}E_{f_m}(x).
	\end{equation*}
Thus, using Proposition~\ref{Prop for one L function}, one has
\begin{align*}
E_{\mathcal{S},\underline{c}}(x) &= \sum_{m = 0}^{\infty} c_{m} \left(G_{f_m,T}(x) 
- \epsilon_{f_m}(x,T)
+ O\left(d_m\frac{\log\mathfrak{q}(f_m)}{\log x} \right) \right).
\end{align*}
For each $x$ and $T$, the three series are convergent, we separate
\begin{multline*}
	E_{\mathcal{S},\underline{c}}(x)= G_{\mathcal{S},\underline{c},M,T}(x) - 
	\sum_{m >M} \Bigg(c_m \sum_{\substack{\gamma\in\mathcal{Z}_{f_m}(T) \\ \gamma\notin\mathcal{Z}_{\mathcal{S},\underline{c},M}(T) }}2 \re\Big( \ord(\gamma,m) \frac{x^{i\gamma}}{\frac12 + i\gamma}\Big) \Bigg) \\
	 - \sum_{m = 0}^{\infty}c_{m}\epsilon_{f_m}(x,T)
	+ O\left(\sum_{m = 0}^{\infty}\lvert c_{m}\rvert d_m\frac{\log\mathfrak{q}(f_m)}{\log x} \right).
\end{multline*}
For each $m>M$, one has
\begin{align*}
\sum_{\gamma\in\mathcal{Z}_{f_m}(T)\smallsetminus\mathcal{Z}_{\mathcal{S},\underline{c},M}(T)}2 \re\left( \ord(\gamma,m) \frac{x^{i\gamma}}{\frac12 + i\gamma}\right)
\ll \log T \log(\mathfrak{q}(f_m)T). 
\end{align*}
Thus
\begin{align*}
\sum_{m >M} c_m \sum_{\substack{\gamma\in\mathcal{Z}_{f_m}(T) \\ \gamma\notin\mathcal{Z}_{\mathcal{S},\underline{c},M}(T) }}2 \re\left( \ord(\gamma,m) \frac{x^{i\gamma}}{\frac12 + i\gamma}\right)
\ll (\log T)^{2}  \sum_{m >M} \lvert c_m\rvert \log(\mathfrak{q}(f_m)).
\end{align*}
For each $T$ the series is convergent, so there exist $M = M_{\mathcal{S},\underline{c}}(T)$ such that 
\begin{equation*}
 \sum_{m >M} \lvert c_m\rvert  \log(\mathfrak{q}(f_m)) \leq \frac{1}{(\log T)^{2}T}.
\end{equation*}
Let
\begin{align*}
\epsilon_{\mathcal{S},\underline{c}}(x,T) =  \sum_{m = 0}^{\infty}c_{m}\epsilon_{f_m}(x,T).
\end{align*}
One has
\begin{align}
\label{Eq sum epsilon}
\nonumber\int_{2}^{Y} \lvert \epsilon_{\mathcal{S},\underline{c}}(e^y,T) \rvert^2 \diff y &\leq 
\sum_{m = 0}^{\infty}\sum_{n =0}^{\infty} \lvert c_{m} \rvert \cdot \lvert c_n\rvert  \int_{2}^{Y} \lvert \epsilon_{f_m}(e^y,T) \rvert \cdot  \lvert \epsilon_{f_n}(e^y,T) \rvert \diff y \\
\nonumber
&\ll \left\lbrace \sum_{m = 0}^{\infty} \lvert c_{m} \rvert
\left(Y\frac{\left(\log(\mathfrak{q}(f_m)T^{d_m})\right)^2}{T} + \frac{\left(\log(\mathfrak{q}(f_m)T^{d_m})\right)^2\log T}{T}\right)^{\frac12} \right\rbrace^2 \\
&\ll_{\mathcal{S},\underline{c}}  Y\frac{(\log T)^2}{T} + \frac{(\log T)^{3}}{T}. 
\end{align}
Finally, since the series
\begin{align*}
 \sum_{m \geq 0}\lvert c_{m}\rvert d_m\log\mathfrak{q}(f_m) 
\end{align*}
is convergent, the proof is complete.
\end{proof}

We can now come back to the proof of Proposition~\ref{Prop_LimOfDist}.

\begin{proof}[Proof of Proposition~\ref{Prop_LimOfDist}]
By Proposition~\ref{Prop sum L functions},  $E_{\mathcal{S},\underline{c}}$ is a $B^2$-almost periodic function well approximated by the $G_{\mathcal{S},\underline{c},M(T),T}$'s.
Thus Proposition~\ref{Prop_LimOfDist} follows from\footnote{Note that there is a misprint in the proof of \cite[Th.~2.9]{ANS}, (2.10) should read $\frac{1}{Y}\int_{0}^{Y}\lvert \vec{\phi}(y) - \vec{P}_M(y)\rvert \diff y < \epsilon$ for~$Y$ large enough, the constant $A_\epsilon$ may have to be enlarged to include smaller~$Y$'s, see also \cite[Th. 1.17]{Bailleul_Kronecker}, correcting this in more details.} \cite[Th.~2.9]{ANS}. 
\end{proof}
	
Then Theorem~\ref{Th_GeneDistLim} follows.
\begin{proof}[Proof of Theorem~\ref{Th_GeneDistLim}]
	The existence of the limiting logarithmic distribution $\mu_{\mathcal{S},\underline{c}}$ is stated in Proposition~\ref{Prop_LimOfDist}. 
	In the process of the proof, we used the fact that the function $E_{\mathcal{S},\underline{c}}$ is a $B^2$-almost periodic function, by \cite[Chap. II, \S 6, 4°]{Besicovitch55} it admits a mean value which is
		\[\mathbb{E}(\mu_{\mathcal{S},\underline{c}}) = \lim_{T\rightarrow\infty}\mathbb{E}(\mu_{\mathcal{S},\underline{c},M(T),T}) =  m_{\mathcal{S},\underline{c}}.\] 
	Then it follows from \cite[Chap. II, \S 9, 1°]{Besicovitch55} that it admits a second moment which is given via Parseval's identity.
	The formula for the variance follows as in \cite[Th. 1.14]{ANS} and \cite[Lem. 2.5, 2.6]{Fiorilli_HighlyBiased}.
		
	For the decay of the tails of the distribution, the proof is similar to the proof of \cite[Th. 1.2]{RS} (see also \cite[Lem. 4.8]{DevinChebyshev}) noting that the measure $\mu_{\mathcal{S},\underline{c},M(T),T}$ is supported inside an interval of the form $[-A(\log T)^2, A(\log T)^2]$, for a positive constant $A$ depending on $\mathcal{S}$ and $\underline{c}$.		
	\end{proof}

Let us now prove the results on the support of $\mu_{\mathcal{S},\underline{c}}$ and on sign changes that depend on supplementary conditions.
	
\begin{proof}[Proof of Proposition~\ref{Prop lower bound tails}] 
	The proof is similar to the proof of~\cite[Th.~1.2]{RS}, Following the notation of~\cite[Sec. 2.2]{RS},
	let $\epsilon >0$, $t \geq \log2 + \tfrac12\epsilon$, and
	$$F_{\epsilon}(t) = \frac{1}{\epsilon} \int_{t -\frac{\epsilon}{2}}^{t +\frac{\epsilon}{2}} E_{\mathcal{S},\underline{c}}(e^y)\diff y.$$
	Using Proposition~\ref{Prop sum L functions} and the bound 
	\begin{align*}
		\lvert \epsilon_{\mathcal{S},\underline{c}}(x,T)\rvert \leq   \sum_{m = 0}^{\infty}\lvert c_{m}\epsilon_{f_m}(x,T)\rvert
		\ll \frac{\log x}{\sqrt{x}} + \frac{\sqrt{x}}{T}\Big((\log x)^2 + \log T\Big), 
	\end{align*}
letting $T\rightarrow\infty$, we have
\begin{align*}
	F_{\epsilon}(t) = \frac{4}{\epsilon}\sum_{\gamma \in \mathcal{Z}_{\mathcal{S},\underline{c}}}\ord_{\mathcal{S},\underline{c}}(\gamma)\frac{\sin (t\gamma) \sin (\frac{\epsilon}{2}\gamma)}{\gamma^2} + O(1).
\end{align*}
The sum $\sum_{\gamma \in \mathcal{Z}_{\mathcal{S},\underline{c}}}\frac{\lvert \ord_{\mathcal{S},\underline{c}}(\gamma)\rvert}{\gamma^2}$ converges, so there exists $T= T(\epsilon)$ such that
the function 
\begin{align*}
	\tilde{F}_{\epsilon}(t) = \frac{4}{\epsilon}\sum_{\gamma \in \mathcal{Z}_{\mathcal{S},\underline{c},M(T)}(T)}\ord_{\mathcal{S},\underline{c}}(\gamma)\frac{\sin (t\gamma) \sin (\frac{\epsilon}{2}\gamma)}{\gamma^2}
\end{align*}
satisfies
\begin{align*}
	F_{\epsilon}(t) = \tilde{F}_{\epsilon}(t)  + O(1).
\end{align*}
It is then enough to show that $\tilde{F}_{\epsilon}(t)$ is large on a large set. As this is a finite sum, the proof follows from the same argument as in~\cite[Sec. 2.2]{RS}, under the condition that $\ord_{\mathcal{S},\underline{c}}(\gamma)\geq 0$ for all $\gamma$.
\end{proof}

\begin{proof}[Proof of Proposition~\ref{Prop inclusive}]
	The proof is similar to the proof of~\cite[Th.~1.5(c)]{MartinNg}.
	Using~\cite[Lem.~3.8 and Prop.~3.10]{MartinNg}, we write 
	$\mu_{\mathcal{S},\underline{c}} = \mu^{\mathrm{LI}}\ast\mu^{N}$
	where $\hat{\mu}^{\mathrm{LI}}(\xi) = \prod_{\gamma \in \mathcal{Z}_{\mathcal{S},\underline{c}}^{\mathrm{LI}}} J_0\Bigg(\Big\lvert\frac{ 2 \ord_{\mathcal{S},\underline{c}}(\gamma)\xi}{\frac{1}{2} + i \gamma}\Big\rvert\Bigg)$ and $\mu^{N}$ has positive mass in a small interval centred at~$0$.
	In particular the law of $\mu^{\mathrm{LI}}$ is the same as the law of $\sum_{\gamma \in \mathcal{Z}_{\mathcal{S},\underline{c}}^{\mathrm{LI}}} \Big\lvert\frac{ 2 \ord_{\mathcal{S},\underline{c}}(\gamma)}{\frac{1}{2} + i \gamma}\Big\rvert X_{\gamma}$ where the $X_{\gamma}$ are independent random variables each of which is uniformly distributed on the unit circle. Applying~\cite[Lem.~6.2]{MartinNg} with the assumption 
	$\sum_{\gamma\in \mathcal{Z}_{\mathcal{S},\underline{c}}^{\mathrm{LI}}} \Big\lvert\frac{ 2 \ord_{\mathcal{S},\underline{c}}(\gamma)}{\frac{1}{2} + i \gamma}\Big\rvert = \infty $, we conclude that $\mathrm{supp}(\mu^{\mathrm{LI}}) = \mathbf{R}$ and Proposition~\ref{Prop inclusive} follows.
\end{proof}

\begin{proof}[Proof of Proposition~\ref{Prop oscillation without GRH}]
	The idea of the proof is similar to the proof of \cite[Th.~15.2]{MV-book} and is based on a theorem of Landau (precisely the contrapositive of \cite[Lem.~15.1]{MV-book}, which is also given in \cite{KP}).
	Fix $\epsilon >0$, we consider the real functions
	$$f_{\pm} : x \mapsto \sum_{n\leq x}\sum_{m\geq 0}c_m \frac{\Lambda_{f_m}(n)}{\log n} + \sum_{m\geq 0}c_{m}\ord_{s=1}(L(f_m,s)) \Li(x)  \pm x^{\Theta - \epsilon},$$
	where $\Theta$ is defined in the statement of Proposition~\ref{Prop oscillation without GRH} and for each $m\geq 0$, $\Lambda_{f_m}$ is the von Mangoldt function associated to $f_m$.
	Precisely, one has 
	$$ \Lambda_{f_m} = \begin{cases}
		\sum_{j=1}^{d_m} \alpha_{j,m}(p)^k \log p &\text{ if } n = p^k \\
		0 &\text{ if } n \text{ is not a prime power,}
	\end{cases}$$
where the $\alpha_{j,m}$ are the local roots of $L(f_m,\cdot)$.
In particular, using the Ramanujan--Petersson Conjecture, the Prime Number Theorem and the fact that the series $\sum_{m\geq 0}\lvert c_m\rvert d_m$ converges, we see that the functions $f_{\pm}$ are well-defined and we have that 
\begin{align}\label{Eq comp f and E}
f_{\pm}(x) = \frac{\sqrt{x}}{\log x}E_{\mathcal{S},\underline{c}}(x) \pm  x^{\Theta - \epsilon} + O_{\mathcal{S},\underline{c}}(x^{\frac12}) = O_{\mathcal{S},\underline{c},\epsilon'}(x^{\Theta + \epsilon'}),	
\end{align}
with $\epsilon'>0$ arbitrarily small.
For $\re(s) > \Theta$, write
\begin{align*}
	F_{\pm}(s) &= \int_{1}^{\infty} f_{\pm}(x) x^{-s-1} \diff s \\
	&= \frac{1}{s}\sum_{m\geq 0} c_m \Big(\log(L(f_m,s)) - \ord_{s=1}(L(f_m,s))\log(s-1) \Big)  + \frac{r_{\mathcal{S},\underline{c}}(s)}{s} \pm \frac{1}{s-\Theta+\epsilon}
\end{align*}
where we used absolute convergence to exchange the order of summation between the integral and the sum over $m\geq0$, and where the function~$r_{\mathcal{S},\underline{c}}$ is entire.
The second expression gives an analytic continuation of~$F_{\pm}$ to a larger set avoiding lines at the left of points~$\beta + i \gamma$ with $\ord_{\mathcal{S},\underline{c}}(\beta + i\gamma) \neq 0$ where the functions have logarithmic singularities.
In particular, by hypothesis, the functions~$F_{\pm}$ are regular at~$s= \Theta$, but are not regular in any half-plane~$\re(s)>\Theta - \epsilon'$ with $\epsilon'>0$.
Landau's Theorem \cite[Th.~(Landau)]{KP} then implies that the functions~$f_{\pm}$ have infinitely many sign changes.
We deduce that there are infinitely many $x >0$ such that $f_{-}(x) >0$, using~\eqref{Eq comp f and E}, we obtain
\begin{align*}
	\sum_{p\leq x}\sum_{m\geq 0}c_{m}\lambda_{f_m}(p) + \sum_{m\geq 0}c_{m}\ord_{s=1}(L(f_m,s)) \Li(x)  = \Omega_{+}(x^{\Theta -\epsilon}),
\end{align*}
and similarly~$f_{+}$ takes negative values infinitely many times, which then yields the~$\Omega_{-}$-result and concludes the proof. 
\end{proof}

\appendix
\section{Vanishing at the central point with PARI/GP}\label{App Pari}
We include here the code alluded to in Section~\ref{subsection hypothese} with some explanation for beginning \texttt{PARI/GP} users who might be interested in Hecke characters.
This appendix is the fruit of very helpful discussions with Emmanuel Royer in the CNRS International Research Laboratory in Montreal and is mostly an adaptation to our context of \cite{PageHeckeChar}, with help from Bill Allombert, Aurel Page and the User's Guide to PARI/GP \cite{PARI2}.

The base field over which our Hecke characters are defined is $\mathbf{Q}[i]$ which is \verb*|bnfinit(X^2+1)|. The Hecke characters are then elements in the group of characters \verb*|gcharinit| with modulus their conductor (or a multiple of their conductor), we can choose $(2+2i)$ i.e. \verb*|2+2*X| for $\xi_m$.
Recall that our Hecke character~$\xi$ is defined by
$$
	\xi((\alpha)) = \begin{cases}
		\frac{\alpha}{\lvert \alpha \rvert} &\text{ if } \alpha \equiv 1 \bmod \mathfrak{m} \\
		0 &\text{ if } (\alpha, \mathfrak{m}) \neq 1.
	\end{cases}
$$
It has frequency $1$, conductor $(2+2i)$ and for $m\geq 1$, the character $\xi_m$ is the primitive character associated to $\xi^m$.
The function \verb*|gcharidentify| allows us to determine uniquely $\xi$ by indicating that its infinite component (at \verb*|[1]|) is $z \mapsto \big(\frac{z}{\lvert z\rvert}\big)^{-1}$ (we have to take an inverse when going from our classical definition above to the adelic definition used by PARI/GP), this is \verb*|[-1,0]|.
As the character group is encoded additively, taking the power $m$ is multipication by \verb*|m|.
We  then create the $L$-functions associated to Hecke characters using \verb*|lfuncreate| which yields a vector containing interesting data of the $L$-function. In particular the $4$-th coordinate is a number $k$ such that the functional equation of the $L$-function relates $s \leftrightarrow k-s$, so $\frac{k}2$ is the central point. The function \verb*|lfun| evaluates our $L$-function at a point, a third parameter allow to evaluate the derivatives instead. 

Given this information, we can now be convinced that the code below will return the integers~$m$ that do not satisfy the following assumption :
\begin{equation*}
	\ord_{s= \frac12}(L(s,\xi^m)) = \begin{cases}
		0 &\text{ if } m \equiv 0, 1, 2, 3, 4, 6 \bmod 8, \\
		1 &\text{ if } m \equiv 5 , 7 \bmod 8.
	\end{cases}
\end{equation*}

\begin{verbatim}
ConjectureXimax(min,Max,valeurs=0,epsilon=10^(-5))={
	my(
		Corps       = bnfinit(X^2+1),
		GroupeCar   = gcharinit(Corps,2+2*X),
		xi          = gcharidentify(GroupeCar,[1],[[-1,0]]),
		Lval        = List()
	);
	for(m=min,Max,
		my(
			xim         = m*xi,
			foncL       = lfuncreate([GroupeCar,xim]),
			centre      = foncL[4]/2,
			valL        = lfun(foncL,centre),
			z           = 0
		);
		if(abs(valL)<epsilon,if(m%8==5 || m%8==7,z=1;valL=lfun(foncL,centre,1);
			if(abs(valL)<epsilon,printf("*****");print(m)),
				printf("*****");print(m)));
		if(valeurs,listput(Lval,[z,real(valL)]))
	);
	if(valeurs,return(Lval),return());
};
\end{verbatim}
This code ran for about one hour in a personal computer to check values of $m$ in $[1,2000]$ and only returned the value $m=1897 \equiv 1 \bmod 8$ for which it gave $L(\frac12,\xi_m) = 1.2362\ldots \times 10^{-6}$ smaller than our default $\epsilon = 10^{-5}$ but still not $0$.

Now, we write a similar code for the characters $\psi_m$'s.
They are characters of conductor $(4)$.
The character~$\psi_0$ is defined by 
$$
	\psi_0((\alpha)) = \begin{cases}
		1 &\text{ if } \alpha \equiv 1 \bmod (4) \\
		- 1 &\text{ if } \alpha \equiv 3+2i \bmod (4) \\
		0 &\text{ if } (\alpha, (4)) \neq 1,
	\end{cases}
$$
in particular it is not trivial but has frequency $0$ (so the infinite component is trivial \verb*|[0,0]|). 
To use \verb*|gcharidentify|, we also note that $\psi_0(\mathfrak{p})=-1$ for $\mathfrak{p}\mid 5$. 
For $m\geq 1$, observe that $\psi_{m} = \psi_0 \xi^m$, we induce $\xi$ from the group of characters modulo $(2+2i)$ by again using \verb*|gcharidentify| with the same infinte component and value at a prime dividing $5$.
	This explains the code below to check the assumption
$$
	\ord_{s= \frac12}(L(s,\psi_m)) = \begin{cases}
		0 &\text{ if } m \equiv 0, 1, 2 \bmod 4, \\
		1 &\text{ if } m \equiv 3 \bmod 4. 
	\end{cases} 
$$

\begin{verbatim}
ConjecturePsimax(min,Max,avancement=0,valeurs=0,epsilon=10^(-5))={
	my(
		Corps       = bnfinit(X^2+1),
		GroupeCar2  = gcharinit(Corps,2+2*X),
		GroupeCar4  = gcharinit(Corps,4),
		pr5		= idealprimedec(Corps,5)[1],
		psi0	= gcharidentify(GroupeCar4,[1,pr5],[[0,0],1/2]),
		Xi2		= gcharidentify(GroupeCar2,[1],[[-1,0]]),
		Xi4		= gcharidentify(GroupeCar4,[1,pr5],[[-1,0],gchareval(GroupeCar2,Xi2,pr5,0)]),
		Lval	= List()
	);
	for(m=min,Max,
		my(
			psim	= psi0 +m*Xi4,
			foncL	= lfuncreate([GroupeCar4,psim]),
			centre	= foncL[4]/2,
			valL	= lfun(foncL,centre),
			z		= 0
		);
		if(abs(valL)<epsilon,if(m%4==3,z=1;valL=lfun(foncL,centre,1);
			if(abs(valL)<epsilon,printf("*****");print(m)),
				printf("*****");print(m)));
		if(valeurs,listput(Lval,[z,real(valL)]))
	);
	if(valeurs,return(Lval),return());
	};
\end{verbatim}
This code ran for about one hour and a half in a personal computer to check values of $m$ in $[1,2000]$ and did not return anything.

\bibliographystyle{amsalpha} 
\bibliography{biblio}

\newcommand{\etalchar}[1]{$^{#1}$}
\providecommand{\bysame}{\leavevmode\hbox to3em{\hrulefill}\thinspace}
\providecommand{\MR}{\relax\ifhmode\unskip\space\fi MR }
\providecommand{\MRhref}[2]{%
  \href{http://www.ams.org/mathscinet-getitem?mr=#1}{#2}
}
\providecommand{\href}[2]{#2}
\begin{thebibliography}{MSYB{\etalchar{+}}25}

\bibitem[Ank52]{Ankeny}
Nesmith~C. Ankeny, \emph{Representations of primes by quadratic forms}, Amer.
  J. Math. \textbf{74} (1952), 913--919.

\bibitem[ANS14]{ANS}
Amir Akbary, Nathan Ng, and Majid Shahabi, \emph{Limiting distributions of the
  classical error terms of prime number theory}, Q. J. Math. \textbf{65}
  (2014), no.~3, 743--780.

\bibitem[Bai22]{Bailleul_Kronecker}
Alexandre Bailleul, \emph{Explicit {K}ronecker-{W}eyl theorems and applications
  to prime number races}, Res. Number Theory \textbf{8} (2022), no.~3, Paper
  No. 43, 34.

\bibitem[Bes55]{Besicovitch55}
Abram~S. Besicovitch, \emph{Almost periodic functions}, Dover Publications,
  Inc., New York, 1955.

\bibitem[BT15]{BestTrud}
Darcy~G. Best and Tim~S. Trudgian, \emph{Linear relations of zeroes of the
  zeta-function}, Math. Comp. \textbf{84} (2015), no.~294, 2047--2058.
  \MR{3335903}

\bibitem[CFJ16]{CFJ}
Byungchul Cha, Daniel Fiorilli, and Florent Jouve, \emph{Prime number races for
  elliptic curves over function fields}, Ann. Sci. \'{E}c. Norm. Sup\'{e}r. (4)
  \textbf{49} (2016), no.~5, 1239--1277.

\bibitem[Che99]{ChebLetter}
Pafnouti~L. Chebyshev, \emph{Oeuvres de {P}.{L}. {T}chebychef}, vol.~I,
  ch.~Lettre de {M}. le professeur {T}chebychev \`{a} M. Fuss, sur le nouveau
  th\'{e}or\`{e}me relatif aux nombres premiers contenus dans les formes $4n+1$
  et $4n+3$., pp.~697--698, St. Petersbourg, Commissionaires de l'Academie
  imperiale des sciences, 1899.

\bibitem[CKL{\etalchar{+}}20]{CKLMSSWWY}
Ryan~C. Chen, Yujin~H. Kim, Jared~D. Lichtman, Steven~J. Miller, Alina Shubina,
  Shannon Sweitzer, Ezra Waxman, Eric Winsor, and Jianing Yang, \emph{A refined
  conjecture for the variance of gaussian primes across sectors}, Experiment.
  Math. (2020), 33--53.

\bibitem[Col90]{Coleman}
M.~D. Coleman, \emph{The {Distribution} of {Points} at which {Binary}
  {Quadratic} {Forms} are {Prime}}, Proceedings of the London Mathematical
  Society \textbf{s3-61} (1990), no.~3, 433--456.

\bibitem[Con05]{Conrad}
K.~Conrad, \emph{Partial {E}uler products on the critical line}, Canad. J.
  Math. \textbf{57} (2005), no.~2, 267--297.

\bibitem[DDW24]{DDW}
Chantal David, Lucile Devin, and Ezra Waxman, \emph{One-level densities in
  families of {G}r\"ossencharakters associated to {CM} elliptic curves},
  preprint arXiv:2405.15597, 2024.

\bibitem[Dev20]{DevinChebyshev}
Lucile Devin, \emph{Chebyshev's bias for analytic {$L$}-functions}, Math. Proc.
  Cambridge Philos. Soc. \textbf{169} (2020), no.~1, 103--140.

\bibitem[FI97]{FouvryIwaniec_1997}
Etienne Fouvry and Henryk Iwaniec, \emph{Gaussian primes}, Acta Arith.
  \textbf{79} (1997), no.~3, 249--287.

\bibitem[FI98]{FriedlanderIwaniec_1998}
John Friedlander and Henryk Iwaniec, \emph{Asymptotic sieve for primes}, Ann.
  of Math. (2) \textbf{148} (1998), no.~3, 1041--1065.

\bibitem[FI22]{FriedlanderIwaniec_2018}
\bysame, \emph{Coordinate distribution of {G}aussian primes}, J. Eur. Math.
  Soc. (JEMS) \textbf{24} (2022), no.~3, 737--772.

\bibitem[Fio14a]{Fiorilli_EC}
Daniel Fiorilli, \emph{Elliptic curves of unbounded rank and {C}hebyshev's
  bias}, Int. Math. Res. Not. IMRN (2014), no.~18, 4997--5024.

\bibitem[Fio14b]{Fiorilli_HighlyBiased}
\bysame, \emph{Highly biased prime number races}, Algebra Number Theory
  \textbf{8} (2014), no.~7, 1733--1767.

\bibitem[FM]{FiorilliMartin}
Daniel Fiorilli and Greg Martin, \emph{Disproving {H}ooley's conjecture}, J.
  Eur. Math. Soc. \textbf{25}.

\bibitem[FT12]{FT}
Yasutsugu Fujita and Nobuhiro Terai, \emph{On the rank of the elliptic curve
  {$y^2=x^3-nx$}}, Int. J. Algebra \textbf{6} (2012), no.~17-20, 885--901.
  \MR{2957485}

\bibitem[Gre85]{Greenberg}
Ralph Greenberg, \emph{On the critical values of {H}ecke {$L$}-functions for
  imaginary quadratic fields}, Invent. Math. \textbf{79} (1985), no.~1, 79--94.

\bibitem[HBL17]{HBL}
D.~R. Heath-Brown and Xiannan Li, \emph{Prime values of {$a^2 + p^4$}}, Invent.
  Math. \textbf{208} (2017), no.~2, 441--499.

\bibitem[Hec18]{Hecke}
Erich Hecke, \emph{Eine neue {A}rt von {Z}etafunktionen und ihre {B}eziehungen
  zur {V}erteilung der {P}rimzahlen}, Math. Z. \textbf{1} (1918), no.~4,
  357--376.

\bibitem[HL01]{HarmanLewis}
Glyn Harman and Philip Lewis, \emph{Gaussian primes in narrow sectors},
  Mathematika \textbf{48} (2001), no.~1-2, 119--135 (2003).

\bibitem[HLR20]{HuangLiuRudnick}
Bingrong Huang, Jianya Liu, and Ze\'{e}v Rudnick, \emph{Gaussian primes in
  almost all narrow sectors}, Acta Arith. \textbf{193} (2020), no.~2, 183--192.

\bibitem[Hum10]{Humphries}
Peter Humphries, \emph{The {S}ummatory {F}unction of {L}iouville's {F}unction
  and {P}\'olya's {C}onjecture},
  \url{https://sites.google.com/view/peterhumphries/the-summatory-function-of-liouvilles-function-and-p\'olyas-conjecture},
  October 2010, Honours thesis.

\bibitem[IK04]{IK}
Henryk Iwaniec and Emmanuel Kowalski, \emph{Analytic number theory}, American
  Mathematical Society Colloquium Publications, vol.~53, American Mathematical
  Society, Providence, RI, 2004.

\bibitem[Kov75]{Koval}
F.~B. Koval'\v{c}ik, \emph{Density theorems for sectors and progressions},
  Litovsk. Mat. Sb. \textbf{15} (1975), no.~4, 133--151, 245.

\bibitem[KP86]{KP}
Jerzy Kaczorowski and J\'{a}nos Pintz, \emph{Oscillatory properties of
  arithmetical functions. {I}}, Acta Math. Hungar. \textbf{48} (1986), no.~1-2,
  173--185.

\bibitem[KR03]{KR}
Jerzy Kaczorowski and Olivier Ramar\'{e}, \emph{Almost periodicity of some
  error terms in prime number theory}, Acta Arith. \textbf{106} (2003), no.~3,
  277--297.

\bibitem[Kub50]{Kubilius50}
Jonas Kubilius, \emph{The distribution of {G}aussian primes in sectors and
  contours}, Leningrad. Gos. Univ. U\v{c}. Zap. Ser. Mat. Nauk \textbf{137(19)}
  (1950), 40--52.

\bibitem[Kub51]{Kubilius51}
\bysame, \emph{The decomposition of prime numbers into two squares}, Doklady
  Akad. Nauk SSSR (N.S.) \textbf{77} (1951), 791--794.

\bibitem[{LMF}21]{lmfdb}
The {LMFDB Collaboration}, \emph{The {L}-functions and modular forms database},
  \url{http://www.lmfdb.org}, 2021, [Online; accessed April 2021].

\bibitem[LSX20]{LSX}
Peter Cho-Ho Lam, Damaris Schindler, and Stanley~Yao Xiao, \emph{On prime
  values of binary quadratic forms with a thin variable}, J. Lond. Math. Soc.
  (2) \textbf{102} (2020), no.~2, 749--772.

\bibitem[Mak77]{Maknis}
M.~Maknis, \emph{Refinement of the remainder term in the law of the
  distribution of prime numbers of an imaginary quadratic field in sectors},
  Litovsk. Mat. Sb. \textbf{17} (1977), no.~1, 133--137, 213.

\bibitem[Maz08]{MazurErrorTerm}
Barry Mazur, \emph{Finding meaning in error terms}, Bull. Amer. Math. Soc.
  (N.S.) \textbf{45} (2008), no.~2, 185--228.

\bibitem[MN20]{MartinNg}
Greg Martin and Nathan Ng, \emph{Inclusive prime number races}, Trans. Amer.
  Math. Soc. \textbf{373} (2020), no.~5, 3561--3607.

\bibitem[MS12]{MartinScarfy}
Greg Martin and Justin Scarfy, \emph{Comparative prime number theory: {A}
  survey}, arXiv:1202.3408, Feb. 2012.

\bibitem[MSYB{\etalchar{+}}25]{MartinBiblio}
Greg Martin, Pu~Justin Scarfy~Yang, Aram Bahrini, Prajeet Bajpai, K\"ubra
  Benli, Jenna Downey, Yuan~Yuan Li, Xiaoxuan Liang, Amir Parvardi, Reginald
  Simpson, Ethan~Patrick White, and Chi~Hoi Yip, \emph{An annotated
  bibliography for comparative prime number theory}, Expo. Math. \textbf{43}
  (2025), no.~3, Paper No. 125644, 124. \MR{4870699}

\bibitem[MV07]{MV-book}
Hugh~L. Montgomery and Robert~C. Vaughan, \emph{Multiplicative number theory.
  {I}. {C}lassical theory}, Cambridge Studies in Advanced Mathematics, vol.~97,
  Cambridge University Press, Cambridge, 2007.

\bibitem[Neu99]{Neukirch}
J\"{u}rgen Neukirch, \emph{Algebraic number theory}, Grundlehren der
  Mathematischen Wissenschaften [Fundamental Principles of Mathematical
  Sciences], vol. 322, Springer-Verlag, Berlin, 1999, Translated from the 1992
  German original and with a note by Norbert Schappacher, With a foreword by G.
  Harder.

\bibitem[Pag24]{PageHeckeChar}
Aurel Page, \emph{{H}ecke {G}rossencharacters, a {GP} tutorial}, 2024,
  available from
  \url{https://pari.math.u-bordeaux.fr/Events/PARI2024/talks//gchar.pdf}.

\bibitem[PAR25]{PARI2}
\emph{{PARI/GP version \texttt{2.18.1}}}, Univ. Bordeaux, 2025, {The
  PARI~Group}, available from \url{http://pari.math.u-bordeaux.fr/}.

\bibitem[PG20]{Perret-Gentil}
Corentin Perret-Gentil, \emph{Roots of {$L$}-functions of characters over
  function fields, generic linear independence and biases}, Algebra Number
  Theory \textbf{14} (2020), no.~5, 1291--1329.

\bibitem[Pra20]{Pratt}
Kyle Pratt, \emph{Primes from sums of two squares and missing digits}, Proc.
  Lond. Math. Soc. (3) \textbf{120} (2020), no.~6, 770--830.

\bibitem[Roh11]{Rohrlich}
David~E. Rohrlich, \emph{Root numbers}, Arithmetic of {$L$}-functions, IAS/Park
  City Math. Ser., vol.~18, Amer. Math. Soc., Providence, RI, 2011,
  pp.~353--448.

\bibitem[RS94]{RS}
Michael Rubinstein and Peter Sarnak, \emph{Chebyshev's bias}, Experiment. Math.
  \textbf{3} (1994), no.~3, 173--197.

\bibitem[RW19]{RudnickWaxman}
Ze\'{e}v Rudnick and Ezra Waxman, \emph{Angles of {G}aussian primes}, Israel J.
  Math. \textbf{232} (2019), no.~1, 159--199.

\bibitem[Sag20]{Sage}
\emph{{S}agemath, the {S}age {M}athematics {S}oftware {S}ystem ({V}ersion
  9.1)}, 2020, The Sage Developers.

\bibitem[Sar07]{SarnakLetter}
Peter Sarnak, \emph{Letter to~: Barry {M}azur on ``{C}hebychev's bias'' for
  $\tau(p)$}, Publications.ias,
  \url{https://publications.ias.edu/sites/default/files/MazurLtrMay08.PDF},
  2007, (version: 2008-05).

\bibitem[Spe07]{Spe}
Blair~K. Spearman, \emph{Elliptic curves {$y^2=x^3-px$} of rank two}, Math. J.
  Okayama Univ. \textbf{49} (2007), 183--184. \MR{2377178}

\bibitem[Wax21]{Waxman}
Ezra Waxman, \emph{Lower order terms for the one-level density of a symplectic
  family of {H}ecke {$L$}-functions}, J. Number Theory \textbf{221} (2021),
  447--483.

\bibitem[Win41]{Win}
Aurel Wintner, \emph{On the distribution function of the remainder term of the
  prime number theorem}, Amer. J. Math. \textbf{63} (1941), 233--248.

\end{thebibliography}
	
\end{document}